\documentclass[12pt]{amsart}
\usepackage{amsmath, amsthm, amssymb}
\usepackage[english]{babel}
\usepackage{graphicx}
\usepackage{url}
\usepackage{caption}
\usepackage{float}
\usepackage{tikz-cd}
\usepackage{wrapfig}

\def\std{{\mathop{\fam0 i}}}
\def\delet{\mathaccent"7017 }
\def\hp{{h}}

\textwidth=17cm\textheight=23cm\hoffset=-2.25cm\voffset=-2.0cm

\begin{document}

\title{The classification of certain linked $3$-manifolds in $6$-space}
\author{S. Avvakumov}
\thanks{I thank A. Skopenkov for telling me about the problem and for his useful remarks.
 I also thank A. Sossinsky, A. Zhubr, M. Skopenkov, P. Akhmetiev, and an anonymous referee for their feedback. Author was partially supported by Dobrushin fellowship, 2013, and by RFBR grant 15-01-06302.}
\email{s.avvakumov@gmail.com}
\maketitle

\newtheorem*{mainthmAbs}{Theorem}

\newtheorem{mainthm}{Theorem}
\newtheorem{mainthm1}[mainthm]{Theorem}
\newtheorem{Hthm}[mainthm]{Theorem}
\newtheorem*{hyp}{Hypothesis}

\theoremstyle{Lemma}
\newtheorem*{corAbs}{Corollary}

\newtheorem{cor1}{Corollary}

\newtheorem{lemEta}{Lemma}
\newtheorem{lemPT}[lemEta]{Lemma}
\newtheorem{lemL1hatwd}[lemEta]{Lemma}
\newtheorem{lemL1wd}[lemEta]{Lemma}
\newtheorem{lemL2wd}[lemEta]{Lemma}
\newtheorem{lemBasis}[lemEta]{Lemma}
\newtheorem{lemInBasis}[lemEta]{Lemma}
\newtheorem{lemTorIsotopy}[lemEta]{Lemma}
\newtheorem{lemSameParity}[lemEta]{Lemma}
\newtheorem{lemSameParity1}[lemEta]{Lemma}
\newtheorem{lemExistStandardized}[lemEta]{Lemma}
\newtheorem{lemIsotopic}[lemEta]{Lemma}
\newtheorem{lemChangeL1}[lemEta]{Lemma}
\newtheorem{lemChangeL2}[lemEta]{Lemma}
\newtheorem{lemSameLNumbers}[lemEta]{Lemma}
\newtheorem{lemFixedIsotopy}[lemEta]{Lemma}
\newtheorem{lemExamples}[lemEta]{Lemma}
\newtheorem{lemAdd}[lemEta]{Lemma}
\newtheorem{lemT1}[lemEta]{Lemma}
\newtheorem{lemHthmFixed}[lemEta]{Lemma}
\newtheorem{lemExistsDisk}[lemEta]{Lemma}
\newtheorem{lemLambda}[lemEta]{Lemma}
\newtheorem{lemDiskCobordism}[lemEta]{Lemma}
\newtheorem{lem2Isotopy}[lemEta]{Lemma}
\newtheorem{lemFramedSum}[lemEta]{Lemma}
\newtheorem{lemFramedHandlebodyIsotopy}[lemEta]{Lemma}

\theoremstyle{remark}
\newtheorem*{def1}{Definition of standard embeddings}
\newtheorem*{def2}{Construction of the embeddings $f_{k,0,0}$}
\newtheorem*{def3}{Definition of $\varkappa$}
\newtheorem*{def4}{Definition of a framed intersection}
\newtheorem*{def5}{Definition of a framed preimage}
\newtheorem*{def6}{Definition of standard framings and orientations}
\newtheorem*{def7}{Definition of $C_{\partial\std}$ and ${\delet f}$}
\newtheorem*{def8}{Definition of $\widehat{\nu}$ and $\nu$}
\newtheorem*{def10}{Definition of $\theta:{\rm Int}D^2_+\times S^1\rightarrow S^3$}
\newtheorem*{def9}{Definition of $\widehat{\mu}$}
\newtheorem*{def11}{Definition of $\mu$}
\newtheorem*{def12}{Definition of $\mu$ and $\nu$ for embeddings $S^3_1\sqcup S^3_2\rightarrow S^6$}
\newtheorem*{def13}{Construction of the embeddings $g_{m,n}$}
\newtheorem*{def14}{Definition of $\theta$}
\newtheorem*{def15}{Definition of $\rho$ and $\equiv$}
\newtheorem*{def16}{Notation for the Hopf invariant}
\newtheorem*{def17}{Notation for framed submanifolds}
\newtheorem*{def18}{Definition of a framed embedded connected sum}
\newtheorem*{def19}{Definition of a framed isotopy}
\newtheorem*{def20}{Notation for isotopies}
\newtheorem*{def21}{The natural orientation of a framed submanifold}
\newtheorem*{def22}{Orientation of intersection}
\newtheorem*{def23}{Agreement between orientations of manifolds and their boundaries}
\newtheorem*{def24}{Definition of the algebraic number of points of intersection}
\newtheorem*{def25}{Orientation of an embedded manifold}

\newtheorem{rem}{Remark}

\begin{abstract}
We work entirely in the smooth category. An embedding $f:(S^2\times S^1)\sqcup S^3\rightarrow {\mathbb R}^6$ is {\it Brunnian}, if the restriction of $f$ to each component is isotopic to the standard embedding. For each triple of integers $k,m,n$ such that $m\equiv n \pmod{2}$, we explicitly construct a Brunnian embedding $f_{k,m,n}:(S^2\times S^1)\sqcup S^3 \rightarrow {\mathbb R}^6$ such that the following theorem holds.

\begin{mainthmAbs}
\label{mainthmAbs}
Any Brunnian embedding $f:(S^2\times S^1)\sqcup S^3\rightarrow {\mathbb R}^6$ is isotopic to $f_{k,m,n}$ for some integers $k,m,n$ such that $m\equiv n \pmod{2}$. Two embeddings $f_{k,m,n}$ and $f_{k',m',n'}$ are isotopic if and only if $k=k'$, $m\equiv m' \pmod{2k}$ and $n\equiv n' \pmod{2k}$. 
\end{mainthmAbs}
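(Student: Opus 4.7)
The plan is to reduce the theorem to a system of three numerical isotopy invariants $\varkappa,\mu,\nu$ already introduced in the paper: $\varkappa(f)\in\mathbb{Z}$ is the linking number between the two components, and $\mu(f),\nu(f)\in\mathbb{Z}/2\varkappa(f)$ are Hopf-type invariants defined via framed preimages and auxiliary bounding membranes. The ``only if'' direction of the second statement is then handled by direct computation: I would verify that $\varkappa(f_{k,m,n})=k$, $\mu(f_{k,m,n})\equiv m$, $\nu(f_{k,m,n})\equiv n$ modulo $2k$, relying on the well-definedness lemmas \emph{lemL1hatwd}, \emph{lemL1wd}, \emph{lemL2wd}, and on \emph{lemSameParity}/\emph{lemSameParity1} for the parity constraint $m\equiv n\pmod 2$. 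The converse ``if'' direction of the second statement amounts to producing explicit ambient isotopies $f_{k,m,n}\simeq f_{k,m\pm 2k,n}\simeq f_{k,m,n\pm 2k}$, which should be exactly what \emph{lemChangeL1} and \emph{lemChangeL2} supply.

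The remaining and central task is completeness: showing that every Brunnian $f$ with $(\varkappa,\mu,\nu)(f)=(k,m,n)$ is isotopic to $f_{k,m,n}$. My plan is a three-stage standardization. Stage one uses the Brunnian hypothesis and isotopy extension to put $f(S^3)$ into the standard position in $S^6$. Stage two: since $S^6\setminus S^3\simeq S^2$ is simply connected, \emph{lemExistStandardized} and \emph{lemTorIsotopy} allow an isotopy of the $S^1$-core of $f(S^2\times S^1)$ onto a standard curve meeting the Seifert disk of $S^3$ in exactly $k$ algebraic points, after which \emph{lemExistsDisk} and \emph{lemDiskCobordism} place a framed bounding disk in standard position. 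Stage three: apply \emph{lemFramedHandlebodyIsotopy} and \emph{lemFixedIsotopy} to reduce $f$ on the remaining $S^1\times D^2$-handle to the model handle of $f_{k,0,0}$ up to framing, and read off the residual framing through \emph{lemBasis}/\emph{lemInBasis} to match it with $(\mu(f),\nu(f))$.

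The main obstacle I anticipate is the final step of stage three: once the $S^3$-component and the core circle have been standardized, showing that any two embeddings with the same $(\mu,\nu)$ are ambient-isotopic rel the chosen standard data. The subtlety is that the relevant space of framed $S^1\times D^2$-embeddings into $S^6\setminus S^3$ is governed by $\pi_3(S^2)=\mathbb{Z}$, whose generator corresponds to a twist by the linking number, and one must verify that the period is $2k$ rather than $k$ --- this is the origin of the factor of $2$ and the one place where the parity constraint genuinely enters the computation. Once this handlebody-isotopy theorem is in place, the three stages combine to show that the isotopy class of $f$ depends only on the triple $(\varkappa(f),\mu(f),\nu(f))$, and the realization and change-of-invariant steps then yield both statements of the theorem.
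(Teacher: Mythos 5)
Your high-level architecture --- three invariants $\varkappa,\mu,\nu$, their well-definedness, realization by the models $f_{k,m,n}$, and explicit isotopies changing $\widehat\mu$ and $\widehat\nu$ by multiples of $2\varkappa$ --- matches the paper, and your treatment of the ``if''/``only if'' parts of the second statement is essentially the paper's proof of Theorem \ref{mainthm} modulo Lemmas \ref{lemExamples}, \ref{lemSameParity}, and \ref{lemT1}. The gap is in the completeness step (the paper's Lemma \ref{lemT1}), where you take the opposite, and unworkable, order of standardization: you fix $f(S^3)$ first and then try to classify the torus component inside $S^6\setminus f(S^3)\simeq S^2$. First, the lemmas you invoke there do not apply: Lemmas \ref{lemExistStandardized} and \ref{lemTorIsotopy} have as hypothesis that $f|_{S^2\times S^1}$ is \emph{already} the standard embedding, so they cannot be used to move the core circle of $f(S^2\times S^1)$ into standard position. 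Second, your count of the residual freedom is wrong: a single copy of $\pi_3(S^2)=\mathbb{Z}$ taken modulo $2k$ yields one $\mathbb{Z}_{2k}$-worth of classes over each $k$, whereas the theorem requires two independent residues $\mu,\nu\in\mathbb{Z}_{2k}$ (subject only to $\mu\equiv\nu\pmod 2$); your stage three cannot produce both. Third, and most seriously, the assertion that framed embeddings $S^2\times S^1\hookrightarrow S^6\setminus S^3$ are ``governed by'' a homotopy group is exactly the kind of homotopy-implies-isotopy statement that is not automatic here: $(q,p)=(6,3)$ sits on the boundary $3q=4p+6$ of the metastable range, and no classification of torus embeddings relative to a complement is available to quote.

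The paper's route avoids all three problems by standardizing the torus first (so that the complement $C_{\partial\std}\simeq S^2\vee S^3\vee S^4$ has $\pi_3\cong\mathbb{Z}\oplus\mathbb{Z}$, which is where $\widehat\nu$ and $\varkappa$ live, Lemmas \ref{lemBasis} and \ref{lemInBasis}), and then importing the genuinely hard isotopy statement from Haefliger's classification of links $S^3\sqcup S^3\subset S^6$ (Theorem \ref{Hthm}): one performs surgery on the standard torus to obtain the sphere $\sigma(S^3)$, checks that $\widehat\mu(f)$ and $\widehat\nu(f)$ equal the Haefliger invariants of the two-component link $\sigma\sqcup f|_{S^3}$ (Lemma \ref{lemSameLNumbers}), applies Theorem \ref{Hthm} and Lemma \ref{lemHthmFixed} to obtain an isotopy in the complement of $\sigma(S^3)$, and finally uses a Whitney-trick argument (Claim 2 in the proof of Lemma \ref{lemIsotopic}) to push that isotopy off $D^6_-$ and hence back into the complement of the torus. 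Your proposal contains no substitute for this input, so as written the completeness step does not go through.
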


We use Haefliger's classification of embeddings $S^3\sqcup S^3\rightarrow {\mathbb R}^6$ in our proof. The following corollary shows that the relation between the embeddings $(S^2\times S^1)\sqcup S^3\rightarrow {\mathbb R}^6$ and $S^3\sqcup S^3\rightarrow {\mathbb R}^6$ is not trivial.

\begin{corAbs}
\label{corAbs}
There exist embeddings $f:(S^2\times S^1)\sqcup S^3\rightarrow {\mathbb R}^6$ and $g,g':S^3\sqcup S^3\rightarrow {\mathbb R}^6$ such that the componentwise embedded connected sum $f\#g$ is isotopic to $f\#g'$ but $g$ is not isotopic to $g'$.
\end{corAbs}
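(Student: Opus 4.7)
The plan is to exploit the non-trivial equivalence in the main theorem: for any fixed $k \geq 1$, the invariants $m, n$ of $f_{k,m,n}$ are only well-defined modulo $2k$, so distinct integer triples can represent isotopic Brunnian embeddings. By contrast, Haefliger's classification of embeddings $S^3 \sqcup S^3 \to \mathbb{R}^6$ is by $\mathbb{Z}$-valued invariants, so integer-level differences survive there. The corollary should follow by constructing $g, g'$ whose ``difference'' lies in the kernel of the forgetful map realized by componentwise connected sum with a Brunnian $f$, but is nonzero in Haefliger's group.

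Concretely, I take $k := 1$, $f := f_{1,0,0}$, and use the family $g_{m,n}: S^3 \sqcup S^3 \to \mathbb{R}^6$ constructed in the paper (def13), setting $g := g_{0,0}$ and $g' := g_{2,0}$. The crux of the argument is the following \emph{shift formula}:
\[
f_{k,a,b} \# g_{m,n} \cong f_{k,\,a+m,\,b+n}.
\]
I expect this to hold essentially by definition, since the notation strongly suggests that $f_{k,m,n}$ is obtained as $f_{k,0,0} \# g_{m,n}$ via the embedded connected sum of def18; if it is not literally the definition, it should be a direct consequence of the construction in def2 and def13.

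Given the shift formula, $f \# g = f_{1,0,0}$ and $f \# g' = f_{1,2,0}$, and these are isotopic by the main theorem since $2 \equiv 0 \pmod{2}$. On the other hand, $g$ and $g'$ are non-isotopic as embeddings $S^3 \sqcup S^3 \to \mathbb{R}^6$: the family $g_{m,n}$ is constructed precisely to realize distinct isotopy classes for distinct $(m,n)$, which can be detected by Haefliger's classifying invariants.

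The main obstacle is verifying the two ingredients: the shift formula and the pairwise non-isotopy of the $g_{m,n}$. Both should be immediate from the explicit constructions; otherwise they can be established by direct computation, using additivity of the classifying invariants $\mu$ and $\nu$ under embedded connected sum (as suggested by def9, def11, and def12 of the paper).
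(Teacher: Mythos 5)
Your proposal is correct and follows essentially the same route as the paper: the paper takes $f=f_{1,0,0}$, $g=g_{0,0}$, and $g'=g_{0,2}=\omega$ (the Whitehead link), notes that $f\#g'=f_{1,0,2}$ is isotopic to $f\#g\simeq f_{1,0,0}$ by the main theorem while $g\not\simeq g'$ by Haefliger's theorem. Your ``shift formula'' in the only case you actually need ($a=b=0$) is literally the definition $f_{k,m,n}:=f_{k,0,0}\#g_{m,n}$, so there is no gap; your choice $g'=g_{2,0}$ in place of $g_{0,2}$ is an immaterial variation.
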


{\bf Keywords:} classification of embeddings, linked manifolds, isotopy.
\end{abstract}

%%%%%%%%%%%%%%%%%%%%%%%%%%%%%%%%%%%%%%%%%%%%%%%%%%%%%%%%%%%%%%%%%%%%%%%%%%%%%%%%%%%%%%%%%%%%%%%%%%%%%%%%%
%%%%%%%%%%%%%%%%%%%%%%%%%%%%%%%%%%%%%%%%%%%%%%%%%%%%%%%%%%%%%%%%%%%%%%%%%%%%%%%%%%%%%%%%%%%%%%%%%%%%%%%%%
\section{Introduction.}
%%%%%%%%%%%%%%%%%%%%%%%%%%%%%%%%%%%%%%%%%%%%%%%%%%%%%%%%%%%%%%%%%%%%%%%%%%%%%%%%%%%%%%%%%%%%%%%%%%%%%%%%%
\subsection*{Statement of the main result.}~\\
We work entirely in the smooth category.

In the statements of the results we use the following objects
explicitly constructed later in this section:
\begin{itemize}
\item{standard embeddings $S^3\rightarrow S^6$ and $S^2\times S^1\rightarrow S^6$;\footnote{For $m\geq n+2$ classifications of embeddings of $n$-manifolds into $S^m$ and into ${\mathbb R}^m$ are the same (see \cite[1.1 Sphere and Euclidean space]{Atl1}). It is more convenient to us to consider embeddings into $S^6$ instead of ${\mathbb R}^6$.}}
\item{embedding $f_{k,0,0}:S^2\times S^1\sqcup S^3 \rightarrow S^6$ for each integer $k$;\footnote{Here and below $S^2\times S^1\sqcup S^3$ means $(S^2\times S^1)\sqcup S^3$.}}
\item{embedding $g_{m,n}:S^3\sqcup S^3\rightarrow S^6$ for each pair $m,n$ of integers such that $m\equiv n \pmod{2}$;}
\item{componentwise embedded connected sum operation $\#$.}
\end{itemize}

An embedding $S^2\times S^1\sqcup S^3\rightarrow S^6$ or $S^3\sqcup S^3\rightarrow S^6$ is {\it Brunnian}, if its restriction to each component is isotopic to the standard embedding.

Embeddings $f_{k,0,0}:S^2\times S^1\sqcup S^3 \rightarrow S^6$ and $g_{m,n}:S^3\sqcup S^3\rightarrow S^6$ are Brunnian.

For each triple of integers $k,m,n$ such that $m\equiv n \pmod{2}$, let $f_{k,m,n}:=f_{k,0,0}\#g_{m,n}$. Each embedding $f_{k,m,n}$ is Brunnian because both $f_{k,0,0}$ and $g_{m,n}$ are Brunnian.

\begin{mainthm}
\label{mainthm}
Any Brunnian embedding $f:S^2\times S^1\sqcup S^3\rightarrow S^6$ is isotopic to $f_{k,m,n}$ for some integers $k,m,n$ such that $m\equiv n \pmod{2}$. Two embeddings $f_{k,m,n}$ and $f_{k',m',n'}$ are isotopic if and only if $k=k'$, $m\equiv m' \pmod{2k}$, and $n\equiv n' \pmod{2k}$.
\end{mainthm}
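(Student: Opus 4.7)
My plan is to classify Brunnian embeddings $f:(S^2\times S^1)\sqcup S^3\to S^6$ by three invariants $(\varkappa,\mu,\nu)$ matching the parameters $(k,m,n)$ up to the claimed equivalences. First I would define $\varkappa(f)\in\mathbb Z$ as the algebraic intersection number of $f(S^3)$ with a Seifert $4$-manifold $V\subset S^6$ bounding $f(S^2\times S^1)$; high-connectivity of the complement of the $S^3$-component shows the choice of $V$ is irrelevant. On top of $\varkappa$, I would define $\mu(f),\nu(f)$ as Haefliger-type $\alpha$-invariants measuring how the $S^3$-component is knotted relative to suitable bounding disks/Seifert manifolds, normalised to take values in $\mathbb{Z}/2\varkappa(f)$. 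A direct computation on the models, using $f_{k,m,n}=f_{k,0,0}\#g_{m,n}$ and additivity of the invariants under embedded connected sum, should give $\varkappa(f_{k,m,n})=k$ and $(\mu,\nu)(f_{k,m,n})=(m,n)\bmod 2k$.

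Next I would prove completeness. Given a Brunnian $f$ with $\varkappa(f)=k$, use that $f|_{S^2\times S^1}$ is isotopic to the standard embedding to bring the first component into the position occupied by it in $f_{k,0,0}$. The $S^3$-component then lies in the complement of the standard $S^2\times S^1$, and a general-position argument combined with handle slides along a tube should let one split off a Brunnian link $g:S^3\sqcup S^3\to S^6$, producing an isotopy $f\simeq f_{k,0,0}\# g$. Haefliger's classification then supplies integers $m\equiv n\pmod 2$ with $g\simeq g_{m,n}$, so $f\simeq f_{k,m,n}$, proving the existence part of the statement.

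Finally, for the uniqueness modulo $2k$, I would produce explicit isotopies $f_{k,m,n}\simeq f_{k,m+2k,n}\simeq f_{k,m,n+2k}$ by sliding a small auxiliary knotted $3$-sphere once through a Seifert manifold bounding the $S^2\times S^1$-component: because the algebraic intersection number of the $S^3$-component of $f_{k,0,0}$ with such a Seifert surface equals $k$, this sweep changes the relevant $\alpha$-invariant by exactly $\pm 2k$. The converse implication follows because $\varkappa$, $\mu\bmod 2k$ and $\nu\bmod 2k$ are isotopy invariants separating the remaining classes.

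The principal obstacle is the last step: producing the sweep isotopy and verifying that it shifts the $\alpha$-invariant by exactly $2k$, not some proper divisor. The factor of $2$ is tied to the parity condition $m\equiv n\pmod 2$ and to the self-intersection nature of Haefliger's invariant, and will require careful bookkeeping of framings near a meridional $S^2\subset S^2\times S^1$. A symmetric argument — essentially showing that no sweep can change the invariants by less than $2k$ — is needed to prove that $\mu,\nu$ are well defined modulo $2k$ but not modulo any smaller integer, completing the classification.
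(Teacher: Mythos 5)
Your overall architecture (three invariants $\varkappa,\mu,\nu$ with $\mu,\nu$ valued in ${\mathbb Z}_{2\varkappa}$, computation on the models via additivity under $\#$, explicit ``sweep'' isotopies shifting the invariants by $2k$, and a completeness argument) is the same as the paper's, but the two steps you flag as ``should follow'' are precisely where the substance lies, and as described they do not go through. First, the claim that a ``general-position argument combined with handle slides'' splits $f$ as $f_{k,0,0}\#g$ is essentially the theorem itself restated: general position is unavailable here because the dimensions are critical (for instance, an isotopy trace of $f(S^3)$ is $4$-dimensional and a meridian $2$-disk times $I$ is $3$-dimensional inside the $7$-dimensional $S^6\times I$, so intersection points are unavoidable and must be cancelled algebraically). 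The paper instead standardizes $f$ to an \emph{upper simple} embedding by an explicit push out of $D^6_-$ (Lemma \ref{lemExistStandardized}, which it notes does \emph{not} follow from the Standartization Lemma of \cite{Sk15} for dimension reasons), forms the auxiliary two-sphere link $\sigma\sqcup f|_{S^3}$ where $\sigma$ is a surgered copy of the torus, applies Haefliger's classification to that link together with a relative version (Lemma \ref{lemHthmFixed}), and then needs a Whitney-trick argument (Claim 2 in the proof of Lemma \ref{lemIsotopic}) to force the resulting isotopy back into the complement of the torus. None of this is supplied or replaceable by handle slides.

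Second, the well-definedness of $\mu$ and $\nu$ modulo exactly $2k$ --- which you defer to ``careful bookkeeping of framings'' --- is the other crux. In the paper it rests on identifying the complement of the standard torus with $S^2\vee S^3\vee S^4$ up to homotopy, expressing $[{\delet f}]\in\pi_3\cong{\mathbb Z}\oplus{\mathbb Z}$ in terms of $\widehat{\nu}(f)$ and $\varkappa(f)$ (Lemmas \ref{lemBasis}, \ref{lemInBasis}), and showing that any ambient isotopy preserving the torus acts on this $\pi_3$ by $[z_3]\mapsto 2k[z_2]+[z_3]$ (Lemma \ref{lemTorIsotopy}); the crucial factor of $2$ is extracted from the parity relation $\mu\equiv\nu\pmod 2$ (Lemma \ref{lemSameParity}), whose proof itself invokes Haefliger's theorem. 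This is a homotopy-theoretic computation, not local framing bookkeeping, and without it your argument establishes nothing about divisibility by $2k$ rather than by some divisor of $2k$. A smaller issue: your definition of $\varkappa$ as ``the algebraic intersection number of $f(S^3)$ with a Seifert $4$-manifold bounding $f(S^2\times S^1)$'' does not typecheck, since a $3$- and a $4$-manifold in $S^6$ meet in a $1$-manifold; the correct definition uses the linking number of $f(S^2\times 1)$ with $f(S^3)$, i.e., the intersection of $f(S^3)$ with a $3$-disk spanning $f(S^2\times 1)$.
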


\begin{rem}
Theorem \ref{mainthm} implies that there is a surjective map from the set of isotopy classes of Brunnian embeddings $S^2\times S^1\sqcup S^3\rightarrow S^6$ to ${\mathbb Z}$; the preimage of any $k\in {\mathbb Z}$ under this map is in one-to-one correspondence with the set $\frac{\{(m,n)\in {\mathbb Z}\times {\mathbb Z}|m\equiv n \pmod{2}\}}{(m,n)\sim (m+2k,n) \sim (m,n+2k)}$.
\end{rem}

The invariants necessary for the proof of Theorem \ref{mainthm} are defined in Section 2.

The classification of Brunnian embeddings $S^3\sqcup S^3\rightarrow S^6$ is given by the following Haefliger Theorem (\cite[Example 1.2]{MSk09}, see also \cite[\S 6]{Ha62}, \cite[Theorem 10.7]{Ha66}).

\begin{Hthm}[Haefliger]
\label{Hthm}
Any Brunnian embedding $g:S^3\sqcup S^3\rightarrow S^6$ is isotopic to $g_{m,n}$ for some integers $m,n$ such that $m\equiv n \pmod{2}$. Two embeddings $g_{m,n}$ and $g_{m',n'}$ are isotopic if and only if $m=m'$ and $n=n'$.
\end{Hthm}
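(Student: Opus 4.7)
Following Haefliger, the approach is to introduce two $\mathbb{Z}$-valued isotopy invariants, verify that they realize on the family $g_{m,n}$ exactly the pairs $(m,n)$ with $m\equiv n\pmod 2$, and prove they are complete.

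For the invariants, I would take the $\mu$ and $\nu$ defined later in this paper (see \textit{Definition of $\mu$ and $\nu$ for embeddings $S^3_1\sqcup S^3_2\to S^6$}). Since each $g|_{S^3_i}$ is isotopic to the standard embedding, it bounds an oriented embedded $4$-manifold $W_i\subset S^6$ (one may even take $W_i\cong D^4$). After perturbing $W_1$ transverse to $g(S^3_2)$, set $\mu(g):=W_1\cdot g(S^3_2)$ as an oriented intersection number, and similarly $\nu(g):=W_2\cdot g(S^3_1)$. Independence of the choice of $W_i$ follows from $H_4(S^6\setminus g(S^3_j))=0$, and additivity under componentwise embedded connected sum is clear from the construction. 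Direct evaluation on $g_{m,n}$ then gives $\mu(g_{m,n})=m$ and $\nu(g_{m,n})=n$. The necessity of $\mu\equiv\nu\pmod 2$ is a symmetry argument: gluing $W_1$ and $W_2$ along a common boundary push-off yields a closed $4$-cycle in $S^6$ whose self-intersection mod $2$ computes $\mu-\nu$, and this is always even.

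The hard part is \emph{completeness}: given Brunnian $g,g'$ with $(\mu,\nu)(g)=(\mu,\nu)(g')$, produce an ambient isotopy $g\simeq g'$. I would first ambient-isotope so that $g|_{S^3_1}=g'|_{S^3_1}$ is the standard inclusion $S^3\hookrightarrow S^6$, which reduces the problem to classifying the two embeddings $S^3\to S^6\setminus S^3\simeq S^2$ up to isotopy in the complement. The pair $(\mu,\nu)$ captures precisely this data: one invariant is the Hopf invariant of the underlying map into $S^2$, and the other is a secondary framed-bordism obstruction refining it. The deepest ingredient is Haefliger's metastable-range embedding theorem, which converts the matching of these homotopical invariants into a genuine smooth ambient isotopy. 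This last step is the main technical hurdle and is where Haefliger's original papers \cite{Ha62,Ha66} do most of their work; once it is in hand, Theorem~\ref{Hthm} follows.
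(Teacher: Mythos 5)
This statement is the Haefliger Theorem, which the paper does not prove at all: it is quoted as an external result with references to \cite{MSk09}, \cite{Ha62}, and \cite{Ha66}, so there is no internal proof to compare your sketch against. Your overall strategy (define two linking-type invariants, evaluate them on $g_{m,n}$, prove completeness via Haefliger's machinery) is the standard one and matches how the invariants $\mu,\nu$ are set up in Section 4 of the paper, but your execution of the first step contains a genuine error.

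You define $\mu(g):=W_1\cdot g(S^3_2)$ as ``an oriented intersection number'' after making a $4$-manifold $W_1$ transverse to $g(S^3_2)$ in $S^6$. This is dimensionally impossible: a transverse intersection of a $4$-manifold and a $3$-manifold in $S^6$ has dimension $4+3-6=1$, so there is no count of signed points to take. The correct invariant --- the one the paper uses in its ``Definition of $\mu$ and $\nu$ for embeddings $S^3_1\sqcup S^3_2\rightarrow S^6$'' --- is the Hopf invariant (self-linking number) of the \emph{framed $1$-submanifold} ${g|_{S^3_1}}^{-1}(\Delta)\subset S^3_1$, where $\Delta$ is a $4$-disk bounding $g(S^3_2)$; equivalently, the class of the map $S^3_1\to S^6\setminus g(S^3_2)\simeq S^2$ in $\pi_3(S^2)\cong\mathbb{Z}$. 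Your appeal to $H_4(S^6\setminus g(S^3_j))=0$ for well-definedness is likewise too weak once the invariant is corrected: one needs a framed-cobordism (relative Pontryagin--Thom) argument, not just a homological one, as in the paper's Lemma \ref{lemDiskCobordism}. Your parity argument also does not parse as stated: $W_1$ and $W_2$ have different boundaries ($g(S^3_1)$ and $g(S^3_2)$ respectively), so they cannot be glued ``along a common boundary push-off'' to form a closed $4$-cycle. The actual parity relation $\mu\equiv\nu\pmod 2$ is a nontrivial part of Haefliger's classification (the paper itself, in footnote to Lemma \ref{lemSameParity}, deliberately deduces it \emph{from} Theorem \ref{Hthm} rather than reproving it). Finally, deferring completeness to \cite{Ha62,Ha66} is acceptable only in the sense that the paper does the same --- but note that $(q,p)=(6,3)$ satisfies $3q=4p+6$, the borderline case, which is covered by \cite[Theorem 1.1]{MSk09} rather than by the strict metastable-range statement you invoke.
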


Theorem \ref{mainthm} together with Theorem \ref{Hthm} implies the following Corollary \ref{cor1}.

\begin{cor1}
\label{cor1}
There exist embeddings $f:S^2\times S^1\sqcup S^3\rightarrow S^6$ and $g,g':S^3\sqcup S^3\rightarrow S^6$  such that $f\#g$ is isotopic to $f\#g'$ but $g$ is not isotopic to $g'$.
\end{cor1}

Corollary \ref{cor1} shows that the relation between the embeddings $S^2\times S^1\sqcup S^3\rightarrow S^6$ and $S^3\sqcup S^3\rightarrow S^6$ is not trivial. The corollary is deduced from the theorems above near the end of this section.

We conjecture that the following analogue of Theorem \ref{mainthm} also holds in piecewise-smooth category. The classification of piecewise-smooth embeddings coincides with the classification of PL embeddings (\cite{Ha67}). So if the following hypothesis is true then the analogue of Theorem \ref{mainthm} holds in both piecewise-smooth and PL categories.

\begin{hyp}
Any Brunnian piecewise-smooth embedding $f:S^2\times S^1\sqcup S^3\rightarrow S^6$ is piecewise-smooth isotopic to $f_{k,m,n}$ for some integers $k,m,n$ such that $m\equiv n \pmod{2}$. Two embeddings $f_{k,m,n}$ and $f_{k',m',n'}$ are piecewise-smooth isotopic if and only if $k=k'$, $m\equiv m' \pmod{2k}$, and $n\equiv n' \pmod{2k}$.
\end{hyp}

%%%%%%%%%%%%%%%%%%%%%%%%%%%%%%%%%%%%%%%%%%%%%%%%%%%%%%%%%%%%%%%%%%%%%%%%%%%%%%%%%%%%%%%%%%%%%%%%%%%%%%%%%
\subsection*{Discussion of the main result.}~\\
By a {\it $p$-link in $q$-space} we mean an embedding $S^p\sqcup S^p\rightarrow {\mathbb R}^q$. A lot of interesting results on $1$-links in $3$-space were obtained during the rapid advancement of topology in the 20th century. However, it was soon realized that the complete isotopy classification of these links is unachievable even for links with unknotted components. The same is true for high-dimensional links in codimension $2$, i.e., when $q=p+2>3$.

The situation is less grim in codimension greater than $2$. If $q\geq p+3$ one can define the generalized linking numbers of a link, which are its isotopy invariants (see \cite[definition of $\lambda_{12}$]{Atl2}, or \cite[\S 4]{Ha62}, or \cite[definition of $\lambda$]{MSk09}, or definition of $\mu$ and $\nu$ for embeddings $S^3_1\sqcup S^3_2\rightarrow S^6$ in Section 4 of this text). 

The generalized linking numbers classify $p$-links in $q$-space 
\begin{itemize}
\item{up to h-cobordism, if $q>\frac{3p+3}{2}$ (\cite[\S 5]{Ha62});}
\item{up to h-cobordism modulo knotting of their components, if $q=\frac{3p+3}{2}, q > 9, q\neq 21$ (\cite[\S 6]{Ha62});}
\item{up to isotopy modulo knotting of their components, if $3q>4p+6$ (\cite[Theorem 10.7]{Ha66}).}
\end{itemize}

The latter statement also holds if $3q=4p+6$ (\cite[Theorem 1.1]{MSk09}). The smallest pair $(q,p)$ of numbers satisfying $3q\geq 4p+6$, $q\geq p+3$ and such that the classification of $p$-links in $q$-space is not trivial is $(6,3)$.

For some $q\leq \frac{3p+3}{2}$, the related problem of the {\it link homotopy} classification of linked $p$-{\it manifolds} in ${\mathbb R}^q$ was solved by A. Skopenkov \cite{Sk00} (when these $p$-manifolds are spheres there are stronger results, see bibliography in \cite{Sk00}). However, Theorem \ref{mainthm} is the first result on the isotopy classification of some linked $p$-manifolds (different from homology spheres) in ${\mathbb R}^q$ for $q\leq \frac{3p+3}{2}$.

To prove Theorem \ref{mainthm}, we define three isotopy invariants $\varkappa$, $\mu$, and $\nu$ which constitute a complete system of invariants for our problem (see Section 2). The definition of the last two invariants $\mu$ and $\nu$ is rather similar, albeit more complicated, to the definition of the generalized linking numbers. It is harder to prove, however, that $\mu$ and $\nu$ are well defined. The first invariant $\varkappa$ is very simple but has no analogue in the case of embeddings $S^3\sqcup S^3\rightarrow S^6$ (more precisely, its analogue is trivial).

We partly reduce the injectivity of the invariants to the Haefliger Theorem (Theorem \ref{Hthm}). Another important part of the proof of the injectivity (see Section 7, Lemmas \ref{lemChangeL1}, \ref{lemChangeL2}, and \ref{lem2Isotopy}) involves explicitly constructed isotopies between some ``good'' embeddings $(S^2\times S^1)\sqcup S^3\rightarrow {\mathbb R}^6$ (which we call {\it simple} and {\it upper simple} embeddings, see definition in Section 2).

The surjectivity of the invariants is proved by a rather simple explicit construction of the embeddings $f_{k,m,n}$ and by Lemma \ref{lemExamples}.

%%%%%%%%%%%%%%%%%%%%%%%%%%%%%%%%%%%%%%%%%%%%%%%%%%%%%%%%%%%%%%%%%%%%%%%%%%%%%%%%%%%%%%%%%%%%%%%%%%%%%%%%%
\subsection*{Embedded connected sum.}~\\
Let $M$ and $N$ be compact oriented connected manifolds, ${\rm dim}M={\rm dim}N=p$. Let $b_M:D^p\rightarrow M$ and $b_N:D^p\rightarrow N$ be orientation preserving and orientation reversing embeddings, respectively. Denote by $M_0$ and $N_0$ the closures of the complements of $M$ and $N$ to $b_M(D^p)$ and $b_N(D^p)$, respectively. Define the connected sum $M\#N$ as $$M_0\cup_{b_M|_{S^{p-1}}} S^{p-1}\times I\cup_{b_N|_{S^{p-1}}} N_0.$$

	\begin{wrapfigure}{r}{0.3\textwidth} 
	\begin{center} 
	\includegraphics[width=0.28\textwidth]{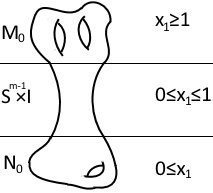} 
	\end{center} 
	\caption{Connected sum.}
	\label{figConnectedSum} 
	\end{wrapfigure}

The smooth structure on $M\#N$ is defined as follows. Denote by $x_1$ the first coordinate in ${\mathbb R}^{2p+2}$. By general position, there is an embedding $u:M\#N\rightarrow {\mathbb R}^{2p+2}$ such that $u(M_0)$, 
$u(N_0)$, and $u(S^{p-1}\times I)$ are proper smooth submanifolds of $\{x_1\leq 0\}$, $\{x_1\geq 1\}$, and $\{0\leq x_1\leq 1\}$, respectively (Fig.~\ref{figConnectedSum}). Then the image of $u$ is a smooth submanifold of ${\mathbb R}^{2p+2}$ and is equipped with a natural smooth structure. The smooth structure on $M\#N$ is then induced by $u$. By general position, any other embedding $u':M\#N\rightarrow {\mathbb R}^{2p+2}$ is isotopic to $u$. So the smooth structure on $M\#N$ is well defined.

Let $f:M\rightarrow S^q$ and $g:N\rightarrow S^q$ be embeddings with disjoint images. Take an embedding $l:D^p\times I\rightarrow S^q$, such that $l|_{D^p\times 0}=f\circ b_M$, $l|_{D^p\times 1}=g\circ b_N$, and such that ${\rm Im}l$ does not intersect elsewhere the union of $f(M)$ and $g(N)$.

The {\it embedded connected sum} of the embeddings $f$ and $g$ (along the tube $l$) is the embedding $f\#g:M\#N\rightarrow S^q$, defined by $f\#g|_{M_0}=f$, $f\#g|_{N_0}=g$, and $f\#g|_{S^{p-1}\times I}=l$ (compare with \cite[\S 2]{Ha62} and \cite[\S 3]{Ha66}). We may choose $l$ so that the map $f\#g$ is smooth.

Note that $\#$ is defined for embeddings, not isotopy classes.\footnote{One can also define the corresponding operation for isotopy classes $[f\sqcup g]$ by the formula $\#([f\sqcup g]):=[f\# g]$, where $f:M\rightarrow S^q$ and $g:N\rightarrow S^q$ are embeddings with disjoint images, and $[f]$ denotes the isotopy class of $f$, see (\cite[3 Embedded connected sum]{Atl1}). The isotopy class $\#([f\sqcup g])$ is well defined, i.e., does not depend on the choice of $l$ and the smoothing of $f\#g$, if and only if $q-p\geq 3$. The proof of this fact is not trivial. For example, it can be deduced from Standartization Lemma 2.1 in \cite{Sk15} by substituting $(0,p,q)$ for $(p,q,m)$.}
The result of $\#$ depends on the choice of $l$ which we will omit in the rest of the text.

%%%%%%%%%%%%%%%%%%%%%%%%%%%%%%%%%%%%%%%%%%%%%%%%%%%%%%%%%%%%%%%%%%%%%%%%%%%%%%%%%%%%%%%%%%%%%%%%%%%%%%%%%
\subsection*{Usage of the term ``tubular neighborhood''.}~\\
By a {\it tubular neighborhood} of a submanifold we mean an open subset in the ambient manifold (\cite[Chapter 4.5]{Hi}). Note that sometimes a tubular neighborhood is defined as an {\it embedding} of the total space of a vector bundle over a submanifold into the ambient manifold, not as the image of this embedding.

%%%%%%%%%%%%%%%%%%%%%%%%%%%%%%%%%%%%%%%%%%%%%%%%%%%%%%%%%%%%%%%%%%%%%%%%%%%%%%%%%%%%%%%%%%%%%%%%%%%%%%%%%
\subsection*{Definition of standard embeddings.}~\\
%%%%\begin{def1}
Let $x_1, x_2, \ldots, x_m$ be the coordinates in ${\mathbb R}^m$.

Denote by $S^{p-1}$ and $D^p$ the unit sphere and the unit disk in ${\mathbb R}^p$, respectively. For $p < m$ identify ${\mathbb R}^p$ with the hyperplane of ${\mathbb R}^m$ given by the equations $x_{p+1}=x_{p+2}=\ldots=x_m=0$. Thus $S^{p-1}$ and $D^p$ are identified with subsets of $S^{m-1}$ and $D^m$, respectively. The obtained inclusions are called the {\it standard embeddings} $S^{p-1} \rightarrow S^{m-1}$ and $D^p \rightarrow D^m$. 

The {\it standard embedding} $D^3 \times S^1 \rightarrow S^4$ 
is given by the formula 
$$({\bf x}, (y_1,y_2)) \mapsto ({\bf x}, y_1\sqrt{2-{\bf x}^2},y_2\sqrt{2-{\bf x}^2})/\sqrt{2}\in S^4,$$ where ${\bf x}$ denotes $(x_1,x_2,x_3)\in D^3$. The image of the embedding is the closure of a tubular neighborhood $\{x_1^2+x_2^2+x_3^2\leq\frac{1}{2}, x_4^2+x_5^2\geq\frac{1}{2}\}\subset S^4$ of the circle\footnote{From the equation $x_4^2+x_5^2=1$ it follows that $x_1=x_2=x_3=0$. To shorten the notation, we do not include the equation $x_1=x_2=x_3=0$ in the formula of the circle. The same goes for rest of the text.} $\{x_4^2+x_5^2=1\}\subset S^4$.

The {\it standard embedding} $\std:D^3 \times S^1 \rightarrow S^6$ is the composition of standard embeddings $D^3 \times S^1 \rightarrow S^4$ and $S^4 \rightarrow S^6$.

The {\it standard embedding} $S^2 \times S^1 \rightarrow S^6$ is the restriction $\std|_{S^2 \times S^1}$. The image of the embedding is $\{x_1^2+x_2^2+x_3^2=\frac{1}{2}, x_4^2+x_5^2=\frac{1}{2}\}\subset S^6$.
%%%%\end{def1}

%%%%%%%%%%%%%%%%%%%%%%%%%%%%%%%%%%%%%%%%%%%%%%%%%%%%%%%%%%%%%%%%%%%%%%%%%%%%%%%%%%%%%%%%%%%%%%%%%%%%%%%%%
\subsection*{Construction of the embeddings $f_{k,0,0}$ and $g_{m,n}$. Proof of Corollary \ref{cor1}.}
\begin{def2}
Take any embedding $t:S^3\rightarrow S^6$ isotopic to the standard embedding and such that $t(S^3)$ lies in a $6$-disk disjoint from $\std(S^2\times S^1)$. Let $f_{0,0,0}:S^2\times S^1\sqcup S^3\rightarrow S^6$ be the embedding such that $f_{0,0,0}|_{S^2\times S^1}=\std|_{S^2\times S^1}$ and $f_{0,0,0}|_{S^3}=t$.

For $\frac{1}{10}\geq\epsilon\geq 0$ define the embeddings $z_{3,\epsilon}:S^3\rightarrow S^6$ and $\overline{z}_{3,\epsilon}:S^3\rightarrow S^6$ by the formulae:
$$z_{3,\epsilon}(x_1,x_2,x_3,x_4)=(\epsilon,0,0,(x_1,x_2,x_3,x_4)\sqrt{1-\epsilon^2})$$
$$\overline{z}_{3,\epsilon}(x_1,x_2,x_3,x_4)=(\epsilon,0,0,(x_1,x_2,x_3,-x_4)\sqrt{1-\epsilon^2})$$

The embeddings $z_{3,\epsilon}$ and $\overline{z}_{3,\epsilon}$ are isotopic to the standard embeddings. The spheres $z_{3,\epsilon}(S^3)$ and $\overline{z}_{3,\epsilon}(S^3)$ are linked with $\std(S^2\times \cdot)$. The absolute value of the linking numbers is $1$.

For an integer $k>0$, let $f_{k,0,0}:S^2\times S^1\sqcup S^3\rightarrow S^6$ be the embedding such that $$f_{k,0,0}|_{S^3}=z_{3,\frac{1}{10k}}\# z_{3,\frac{2}{10k}} \#\ldots\# z_{3,\frac{k}{10k}} \text{\quad and\quad} f_{k,0,0}|_{S^2\times S^1}=\std|_{S^2\times S^1}.$$

For an integer $k<0$, let $f_{k,0,0}:S^2\times S^1\sqcup S^3\rightarrow S^6$ be the embedding such that $$f_{k,0,0}|_{S^3}=\overline{z}_{3,\frac{1}{10|k|}}\# \overline{z}_{3,\frac{2}{10|k|}} \#\ldots\# \overline{z}_{3,\frac{|k|}{10|k|}} \text{\quad and\quad} f_{k,0,0}|_{S^2\times S^1}=\std|_{S^2\times S^1}.$$

The embedding $f_{k,0,0}$ is Brunnian for each $k$.
\end{def2}

\begin{def13}
Let $g_{0,0}:S^3\sqcup S^3\rightarrow S^6$ be some Brunnian embedding such that the image of the first component lies in an open $6$-ball disjoint from the image of the second component. We call $g_{0,0}$ a {\it trivial link}.

The Zeeman map $\tau:\pi_3(S^2)\rightarrow {\rm Emb}^6(S^3\sqcup S^3)$ is defined in \cite{Atl2}. Let $\eta:S^3\rightarrow S^2$ be the Hopf map. 

The Whitehead link $\omega:S^3\sqcup S^3\rightarrow S^6$ is also defined in \cite{Atl2}.

Both embeddings $\tau(\eta),\omega:S^3\sqcup S^3\rightarrow S^6$ are Brunnian\footnote{The image of $\tau(\eta)$ is as follows. Let $S^3\times D^3\subset S^6$ be the {\it standard tubular neighborhood} of the standard sphere $S^3\subset S^6$. Then the image of $\tau(\eta)$ is the union of the standard sphere $S^3\subset S^6$ and the graph of the Hopf map $\eta:S^3\rightarrow S^2$ in the boundary $S^3\times S^2\subset S^6$ of $S^3\times D^3\subset S^6$.}. For $m\equiv n \pmod{2}$, let $g_{m,n}:=m\tau(\eta)\#\frac{(n-m)}{2}\omega$. Here both $0\tau(\eta)$ and $0\omega$ are trivial links, i.e., some links isotopic to $g_{0,0}$.
\end{def13}

\begin{proof}[Proof of Corollary \ref{cor1}]
Consider the trivial link $g_{0,0}:S^3\sqcup S^3\rightarrow S^6$ and the Whitehead link $g_{0,2}=\omega:S^3\sqcup S^3\rightarrow S^6$.

By Theorem \ref{mainthm}, the embeddings $f_{1,0,0}\#g_{0,0}$ and $f_{1,0,0}\#\omega$ are isotopic, because $f_{1,0,0}\#g_{0,0}$ is isotopic to $f_{1,0,0}$ and $f_{1,0,0}\#\omega=f_{1,0,2}$. However, by the Haefliger Theorem, the Whitehead link $g_{0,2}=\omega$ is not trivial, i.e., is not isotopic to $g_{0,0}$.
\end{proof}

%%%%%%%%%%%%%%%%%%%%%%%%%%%%%%%%%%%%%%%%%%%%%%%%%%%%%%%%%%%%%%%%%%%%%%%%%%%%%%%%%%%%%%%%%%%%%%%%%%%%%%%%%
\subsection*{Agreement between the orientations of a manifold and its boundary, the natural orientation of a framed submanifold, framed intersection, and framed preimage.}

\begin{def23}
Let $M^m$ be an oriented manifold with a boundary. Let $u_1, \ldots, u_{m-1}$ be a positive tangent frame of $\partial M$ at $P\in\partial M$. Let $v$ be a vector at $P$ tangent to $M$ and ``looking'' outside $M$. Orientations of $M$ and $\partial M$ {\it agree} if the frame $v, u_1, \ldots, u_{m-1}$ is a positive tangent frame of $M$.
\end{def23}

In this text {\it framing} means {\it normal framing}.

\begin{def21}
Let $M\subset S^k$ be a framed submanifold. Let $u_1,\ldots, u_m$ be a tangent frame and let $v_1,\ldots,v_{k-m}$ be the vectors of the framing of $M$ at some point $P\in M$. The frame $u_1,\ldots, u_m$ is a {\it positive} tangent frame of $M$ at $P$ if $u_1,\ldots, u_m, v_1,\ldots,v_{k-m}$ is a positive tangent frame of $S^k$ at $P$. The defined orientation of $M$ is called its {\it natural} orientation.
\end{def21}

\begin{def25}
Let $M$ be an oriented manifold and let $f:M\rightarrow N$ be an embedding. Then we assume that $f(M)$ has an orientation induced by $f$.
\end{def25}

If we frame $f(M)$ we do it so that the natural orientation of $f(M)$ is the same as the orientation induced by $f$.

\begin{def22}
Let $M^m, N^n\subset S^k$ be transversal oriented submanifolds. Take a point $P\in M\cap N$. Let $w_1,\ldots,w_{m+n-k}$ be a tangent frame of $M\cap N$ at $P$. 
\\Let $w_1,\ldots,w_{m+n-k},u_1,\ldots,u_{k-n}$ be a positive tangent frame of $M$ at $P$. 
\\Let $w_1,\ldots,w_{m+n-k},v_1,\ldots,v_{k-m}$ be a positive tangent frame of $N$ at $P$.

The frame $w_1,\ldots,w_{m+n-k}$ is a {\it positive} tangent frame of $M\cap N$ at $P$ if the frame \\
$w_1,\ldots,w_{m+n-k},u_1,\ldots,u_{k-n},v_1,\ldots,v_{k-m}$ is a positive tangent frame of $S^k$ at $P$.
Note that $M\cap N=(-1)^{(k-m)(k-n)}(N\cap M)$.
\end{def22}

\begin{def4}
Let $M, N\subset S^k$ be transversal submanifolds. Assume that $M$ is framed. For each point $P\in M\cap N$ project the framing of $M$ at $P$ onto the tangent space of $N$ at $P$. Orthonormalise the obtained family of normal to $M\cap N$ vectors using the Gram-Schmidt process. The resulting framed submanifold $M\cap N$ of $N$ is the {\it framed intersection} of $M$ and $N$.
\end{def4}

In this text any intersection of a submanifold and a framed submanifold is assumed to be framed.

\begin{def24}
Let $M, N\subset S^k$ be transversal oriented submanifolds, ${\rm dim}(M)+{\rm dim}(N)=k$. Let $u_1,\ldots,u_m$ and $v_1,\ldots,v_{k-m}$ be positive tangent frames of $M$ and $N$, respectively, at $P\in M\cap N$. The point $P$ is called a {\it positive} ({\it negative}) point of the intersection $M\cap N$ if the frame $u_1,\ldots,u_m,v_1,\ldots,v_{k-m}$ is a positive (negative) tangent frame of $S^k$ at $P$. The {\it algebraic number of points of the intersection} $M\overset{\cdot}{\cap} N$ is the difference between the number of positive and negative points of $M\cap N$.
\end{def24}

\begin{def5}
Let $f:M\rightarrow N$ be an embedding and let $L\subset N$ be a framed submanifold transverse to $f(M)$. 
Then $L\cap f(M)$ is a framed submanifold of $f(M)$. The framed $f$-preimage of $L$ is $f^{-1}(L\cap f(M))$ and is denoted by $f^{-1}(L)$ for brevity.
\end{def5}

In this text any preimage of a framed submanifold is assumed to be framed.

%%%%%%%%%%%%%%%%%%%%%%%%%%%%%%%%%%%%%%%%%%%%%%%%%%%%%%%%%%%%%%%%%%%%%%%%%%%%%%%%%%%%%%%%%%%%%%%%%%%%%%%%%
%%%%%%%%%%%%%%%%%%%%%%%%%%%%%%%%%%%%%%%%%%%%%%%%%%%%%%%%%%%%%%%%%%%%%%%%%%%%%%%%%%%%%%%%%%%%%%%%%%%%%%%%%
\medskip
\section{Plan of the proof of the main result.}
In this section we prove Theorem \ref{mainthm} modulo Lemmas \ref{lemL2wd} through \ref{lemT1}. Proofs of all the Lemmas depend on this section. Lemmas \ref{lemExistStandardized} and \ref{lemExistsDisk} are proved in this section. Lemmas \ref{lemL1hatwd} and \ref{lemL1wd} are proved in Section 3. Lemma \ref{lemSameParity} is proved in Section 4. Lemma \ref{lemL2wd} is proved in Section 6, its proof depends on Section 5 (which uses Lemma \ref{lemSameParity}). Lemma \ref{lemT1} is proved is Section 8, its proof depends on Sections 4, 5, and 7. Finally, Lemma \ref{lemExamples} is proved is Section 9, its proof depends on Sections 5 and 7.    

Denote by ${\rm Emb}^6_B(S^2\times S^1\sqcup S^3)$ the set of isotopy classes of Brunnian embeddings $S^2\times S^1\sqcup S^3\rightarrow S^6$.
For each $m$, denote by $1$ the point $(1,0,\ldots,0)\in S^m$.

\begin{def3}
Define a map 
$$\varkappa:{\rm Emb}^6_B(S^2\times S^1\sqcup S^3)\rightarrow {\mathbb Z} \text{\quad by the formula\quad} \varkappa([f]):=-{\rm lk}(f(S^2\times 1), f(S^3))\in{\mathbb Z}.$$
Here ${\rm lk}(f(S^2\times 1), f(S^3)):=F(D^3)\overset{\cdot}{\cap} f(S^3)$, where $F:D^3\rightarrow S^6$ is a general position embedding such that $F|_{S^2}=f|_{S^2\times 1}$. Clearly, $\varkappa$ is well defined.
\end{def3}

To shorten the notation, we shall write $\varkappa(f)$ instead of $\varkappa([f])$.

\begin{def6}
Denote by $e_k$ the vector $(0,\ldots,0,1,0,\ldots,0)\in {\mathbb R}^m$ where $1$ is at the $k$-th position. 
Orient ${\mathbb R}^m$ and $D^m$ so that the frame $e_1,\ldots, e_m$ is positive.
The {\it standard framing} of the standard embedding $S^{p-1}\subset S^{m-1}$ at any point is $(e_{p+1},e_{p+2},\ldots,e_m)$. The {\it standard framing} of $\std(D^3\times S^1)\subset S^6$ is the restriction of the standard framing of $S^4\supset \std(D^3\times S^1)$.
\end{def6}

\begin{def17}
By $M_{e_{i_1},e_{i_2},\ldots, e_{i_k}}$ we denote the framed submanifold $M\subset S^m$ with the framing $e_{i_1},e_{i_2},\ldots, e_{i_k}$. E.g., $\std(D^3\times S^1)_{e_6,e_7}$ denotes the standardly framed embedding $\std(D^3\times S^1)\subset S^6$.
\end{def17}

An embedding $f:S^2\times S^1\sqcup S^3\rightarrow S^6$ is called {\it simple} if 
$$f \text{\quad is Brunnian and\quad} f|_{S^2\times S^1}=\std|_{S^2\times S^1}.$$

\begin{def7}
Denote by $C_{\partial\std}$ the closure of the complement to a small tubular neighborhood of $\std(S^2\times S^1)$ in $S^6$. For a simple embedding $f:S^2\times S^1\sqcup S^3 \rightarrow S^6$, denote by 
$${\delet f}:S^3\rightarrow C_{\partial\std} \text{\quad the abbreviation\footnotemark{} of\quad} f.$$
\end{def7}

\footnotetext{If $A\subset X$ and $B\subset Y$, then for every $f:X\rightarrow Y$ such that $f(A)\subset B$ we have a map $g:A\rightarrow B, g(x)=f(x)$, which is called the {\it abbreviation} of $f$. Here we follow the definition of the abbreviation given in \cite[Chapter II, Submaps, p.58]{Vi}.}

\begin{def15}
For integers $a$ and $c$ such that $c|a$, let $\rho_{a,c}:{\mathbb Z}_a\rightarrow {\mathbb Z}_c$ be reduction modulo $c$ \footnote{Here ${\mathbb Z}_a$ denotes ${\mathbb Z}/a{\mathbb Z}$. Thus ${\mathbb Z}_0$ is ${\mathbb Z}$.}. Let $b$ be an integer such that $c|b$. Then for any $x\in {\mathbb Z}_a$ and $y\in {\mathbb Z}_b$
 $$x\equiv y \pmod{c} \text{\quad indicates that\quad} \rho_{a,c}(x)=\rho_{b,c}(y).$$
\end{def15}

\begin{def16}
Given a map $\phi:S^3\rightarrow S^2$, denote by $\hp(\phi)\in{\mathbb Z}$ its Hopf invariant. Given a framed $1$-submanifold $a\subset S^3$, denote by $\hp(a)\in{\mathbb Z}$ its Hopf invariant \footnote{Slightly shift $a$ along the first vector of its framing. Denote by $a'$ the obtained oriented $1$-submanifold of $S^3$. The Hopf invariant of $a$ is the linking number ${\rm lk}(a,a')$. Equivalently, the Hopf invariant of $a$ is the Hopf invariant of the map $S^3\rightarrow S^2$ obtained from $a$ by the Pontryagin-Thom construction.}.
\end{def16}

\begin{def8}
Let $f:S^2\times S^1\sqcup S^3\rightarrow S^6$ be a simple embedding such that $\std(D^3\times S^1)$ and ${\delet f}(S^3)$ are in general position. Let $\widehat{\nu}(f)\in{\mathbb Z}$ be the value of the Hopf invariant of the framed $1$-submanifold ${\delet f}^{-1}(\std(D^3\times S^1)_{e_6,e_7})$ of $S^3$. I.e., $$\widehat{\nu}(f) := \hp{\delet f}^{-1}(\std(D^3\times S^1)_{e_6,e_7}) \in{\mathbb Z}.$$
Let $$\nu([f]):=\rho_{0,2\varkappa(f)}(\widehat{\nu}(f))\in {\mathbb Z}_{2\varkappa(f)}.$$
\end{def8}

\begin{lemL2wd}
\label{lemL2wd}
The map $\nu:\varkappa^{-1}(k)\rightarrow {\mathbb Z}_{2k}$ is well defined for each $k$.
\end{lemL2wd}

Denote $$D^q_+:=\{(x_1,\ldots,x_{q+1})\in S^q|x_1\geq 0\} \text{\quad and \quad} D^q_-:=\{(x_1,\ldots,x_{q+1})\in S^q|x_1\leq 0\}.$$

An embedding $f:S^2\times S^1\sqcup S^3\rightarrow S^6$ is called {\it upper simple} if 
$$f \text{\quad is simple and\quad} f(S^3)\subset {\rm Int}D^6_+.$$

\begin{lemExistStandardized}
\label{lemExistStandardized}
Let $f$ be a simple embedding. Then there exists an isotopy fixed on $S^2\times S^1$ between $f$ and some upper simple embedding.
\end{lemExistStandardized}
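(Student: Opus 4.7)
The plan is to construct an ambient isotopy $\{\Phi_t\}_{t\in[0,1]}$ of $S^6$ with $\Phi_0=\mathrm{id}$, $\Phi_t|_{\std(S^2\times S^1)}=\mathrm{id}$ for every $t$, and $\Phi_1(f(S^3))\subset\mathrm{Int}\,D^6_+$; the family $t\mapsto\Phi_t\circ f$ is then a simple embedding for every $t$ and upper simple at $t=1$, which is the required isotopy.

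First I would use the Brunnian hypothesis on $f|_{S^3}$: as an embedding $S^3\to S^6$ it is isotopic to the standard embedding, which in turn is isotopic via a slight rigid rotation of $S^6$ to one whose image sits inside $\mathrm{Int}\,D^6_+$. The isotopy extension theorem then yields an ambient isotopy $\{H_t\}$ of $S^6$ with $H_0=\mathrm{id}$ and $H_1(f(S^3))\subset\mathrm{Int}\,D^6_+$, ignoring for the moment the effect of the isotopy on $\std(S^2\times S^1)$.

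In general $\{H_t\}$ displaces $\std(S^2\times S^1)$ to some submanifold $M=H_1(\std(S^2\times S^1))$, and the remaining task is to undo this by a second ambient isotopy $\{K_t\}$ that returns $M$ to $\std(S^2\times S^1)$ and is supported away from $H_1(f(S^3))$. The key input is that $M$ and $\std(S^2\times S^1)$ are both Brunnian embeddings of $S^2\times S^1$ into $S^6\setminus H_1(f(S^3))$; since $H_1(f(S^3))$ sits inside the 6-disk $\mathrm{Int}\,D^6_+$, the complement $S^6\setminus H_1(f(S^3))$ is topologically tame (homotopy equivalent to $S^2$), and in the metastable range $3m\ge 4p+6$ that applies here with $m=6$, $p=3$, one obtains from the classification of embeddings that any two such Brunnian embeddings of $S^2\times S^1$ are isotopic through embeddings into this complement. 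Applying isotopy extension produces $\{K_t\}$; then $K_1\circ H_1$, after a further small adjustment by a diffeomorphism supported in a tubular neighborhood of $\std(S^2\times S^1)$ disjoint from $H_1(f(S^3))$ to promote setwise-fixing of $\std(S^2\times S^1)$ to pointwise-fixing, yields the required $\Phi_1$, and a path from the identity to $\Phi_1$ inside $\mathrm{Diff}(S^6,\std(S^2\times S^1))$ gives $\{\Phi_t\}$. The main technical obstacle will be the correction step, where one must verify that the isotopy of $S^2\times S^1$-embeddings can indeed be performed inside $S^6\setminus H_1(f(S^3))$; this is where the Brunnian hypothesis on the $S^2\times S^1$-component enters essentially.
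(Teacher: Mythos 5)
Your reduction to Step 2 is where the argument breaks down, and the failure is not a technicality: the claim that any two embeddings of $S^2\times S^1$ into $S^6\setminus H_1(f(S^3))$ that are unknotted in $S^6$ must be isotopic \emph{within the complement} is false. If it were true, then every Brunnian embedding $f$ would be isotopic to the single fixed link $\std|_{S^2\times S^1}\sqcup \Sigma$, where $\Sigma$ is the fixed unknotted sphere in ${\rm Int}D^6_+$ produced by your Step 1 --- contradicting Theorem 1, which exhibits infinitely many isotopy classes. Concretely, an isotopy of $M=H_1(\std(S^2\times S^1))$ to $\std(S^2\times S^1)$ inside $S^6\setminus\Sigma$ would force the links $M\sqcup\Sigma\ (\cong f)$ and $\std(S^2\times S^1)\sqcup\Sigma$ to be isotopic, but already the linking number $\varkappa$ (let alone $\mu$) of the two links need not agree. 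The metastable-range classification you invoke governs embeddings of a closed $3$-manifold into $S^6$ up to isotopy in $S^6$; it says nothing about isotopy inside the complement of $\Sigma$, and measuring exactly this discrepancy is the content of the whole paper. (A secondary unresolved point is your final step: you need $\Phi_1$ to lie in the identity component of ${\rm Diff}(S^6,\std(S^2\times S^1))$, which you assert but do not justify; however, this is moot given the failure of Step 2.)

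The paper's proof goes the opposite way: it keeps the torus fixed and moves only the sphere, and it does not attempt to standardize the link --- it only pushes $f(S^3)$ out of $D^6_-$. The key observation is that the circle $\std((-1,0,0)\times S^1)$ bounds a $2$-disk $\Delta^2\subset D^6_-$ with interior disjoint from $\std(S^2\times S^1)$, and by general position ($3+2<6$) the sphere $f(S^3)$ misses $\Delta^2$. One then pushes $f(S^3)$ out of $D^6_-$ along rays emanating from (a straightened copy of) $\Delta^2$; since the part $\std(D^2_-\times S^1)$ of the torus lying in $D^6_-$ is a union of such rays, the moving sphere never meets the stationary torus, giving an isotopy of $f$ fixed on $S^2\times S^1$. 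This is the mechanism your proposal is missing: the only obstruction to clearing $D^6_-$ is an intersection with a $2$-complex, which vanishes by general position, whereas the obstruction to your Step 2 is a genuine link invariant.
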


\begin{proof}
Denote by $\Delta^2$ the $2$-disk $\{x_1\leq -\frac{1}{\sqrt{2}}, x_2=x_3=x_6=x_7=0\}$ in $S^6$. Then $\Delta^2\subset D^6_-$, ${\rm Int}\Delta^2\cap \std(S^2\times S^1)=\emptyset$ and $\partial\Delta^2=\std((-1, 0, 0)\times S^1)$. By general position, we may assume that $\Delta^2$ and $f(S^3)$ are disjoint.

Consider an embedding $U:D^6_-\rightarrow {\mathbb R}^6$ such that $U(D^6)$ is convex, $U(\Delta^2)=D^2$, restriction $U\std|_{D^2_-\times x}$ is linear for every $x\in S^1$, and $U(\std(D^2_-\times S^1))$ is orthogonal to $D^2$ at each point of $U(\partial\Delta^2)=U(\std((-1, 0, 0)\times S^1))=\partial D^2$ (Fig.~\ref{figupperSimple}).\footnote{Such $U$ exists only because the embedding $D^2_-\times S^1\xrightarrow{\std} D^6_-$ is standard.} To prove the Lemma it suffices to ``push'' $U(f(S^3))$ out of $U(D^6_-)$ via a smooth isotopy.

To do it we ``push'' each point of $U(f(S^3))$ out of $U(D^6_-)$ along the shortest path connecting the point and $D^2$ (Fig.~\ref{figupperSimple}, right). The path is non-zero because $U(f(S^3))$ is disjoint from $D^2$; the path is unique because $D^2$ is standard.
\end{proof}

	\begin{figure}[H]
	\centering
	\includegraphics[width=100mm]{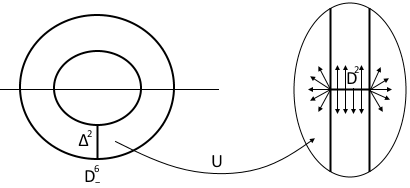}
	\caption{Embedding $U:D^6_-\rightarrow {\mathbb R}^6$ (proof of Lemma \ref{lemExistStandardized}).}
	\label{figupperSimple}
	\end{figure}
	
\begin{rem}
Lemma \ref{lemExistStandardized} is analogous to part (a) of Standartization Lemma 2.1 in \cite{Sk15} but does not directly follow from it, because the hypothesis of Lemma \ref{lemExistStandardized} does not satisfy the dimension restrictions of the hypothesis of Standartization Lemma.
\end{rem}

\begin{lemExistsDisk}
\label{lemExistsDisk}
Let $f:S^3\rightarrow S^6$ be an embedding isotopic to the standard embedding and such that $f(S^3)\subset {\rm Int}(D^6_+)$. Then there exists an embedded disk $\Delta\subset {\rm Int}(D^6_+)$ such that $\partial\Delta=f(S^3)$.
\end{lemExistsDisk}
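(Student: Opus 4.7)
My plan is to reduce the question to an isotopy statement inside ${\rm Int}(D^6_+)$, where a bounding $4$-disk for any sufficiently ``trivial'' $3$-sphere is easy to write down, and then transport that disk along the resulting isotopy.

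First I will fix an explicit model: pick any small standardly embedded $4$-disk $\Delta_0\subset{\rm Int}(D^6_+)$ (concretely, any round $4$-disk lying inside a small coordinate $6$-ball contained in ${\rm Int}(D^6_+)$), and let $g:S^3\to{\rm Int}(D^6_+)$ parametrize its boundary $\partial\Delta_0$. Viewed as an embedding into $S^6$ via the inclusion ${\rm Int}(D^6_+)\hookrightarrow S^6$, the map $g$ is clearly isotopic in $S^6$ to the standard embedding $S^3\to S^6$, since $g(S^3)$ lies in a $6$-ball in $S^6$ and is unknotted there. Combined with the hypothesis on $f$, I conclude that $f$ and $g$ are isotopic as embeddings $S^3\to S^6$.

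The crucial step is to upgrade this $S^6$-isotopy to an isotopy through embeddings into ${\rm Int}(D^6_+)$. Here I would invoke the footnote on the first page of the paper (citing \cite{Atl1}): for $m\ge n+2$ the natural inclusion $\mathbb{R}^m\hookrightarrow S^m$ induces a bijection on isotopy classes of embeddings of $n$-manifolds. With $n=3$, $m=6$, and the identification ${\rm Int}(D^6_+)\cong\mathbb{R}^6$, this upgrades the above $S^6$-isotopy to an isotopy between $f$ and $g$ through embeddings $S^3\to{\rm Int}(D^6_+)$.

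Finally, the isotopy extension theorem applied inside the open smooth manifold ${\rm Int}(D^6_+)$ promotes this to a compactly supported ambient diffeotopy $H_t$ of ${\rm Int}(D^6_+)$ with $H_0=\mathrm{id}$ and $H_1\circ g=f$. The disk $\Delta:=H_1(\Delta_0)\subset{\rm Int}(D^6_+)$ then satisfies $\partial\Delta=f(S^3)$, as required.

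The main obstacle is the middle step: confining the isotopy to the open half-ball ${\rm Int}(D^6_+)$ rather than letting it wander through $S^6$. This is precisely where the dimensional hypothesis $m\ge n+2$ of the cited classification result is essential; the first and last steps are routine constructions.
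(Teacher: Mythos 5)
Your proof is correct, but it takes a genuinely different route from the paper's. You build a model disk $\Delta_0\subset{\rm Int}(D^6_+)$ first and then move it onto $f(S^3)$, which forces you to upgrade an isotopy in $S^6$ to one confined to ${\rm Int}(D^6_+)\cong{\mathbb R}^6$; for that you import the classification statement from \cite{Atl1} (injectivity of the map from isotopy classes in ${\mathbb R}^6$ to isotopy classes in $S^6$ for $3$-manifolds, valid since $6\geq 3+2$) and then apply compactly supported isotopy extension inside the open hemisphere. The paper goes the other way: it takes an arbitrary $4$-disk $\Delta\subset S^6$ bounded by $f(S^3)$ (which exists since $f$ is unknotted), notes by general position that the $4$-dimensional $\Delta$ misses some small $6$-ball $B^6\subset D^6_-$, and then applies an ambient isotopy of $S^6$, fixed on $f(S^3)$, expanding $B^6$ onto all of $D^6_-$; the image of $\Delta$ under this isotopy then lies in $S^6\setminus D^6_-={\rm Int}(D^6_+)$. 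The paper's argument is more self-contained --- only general position plus an explicit ``inflate the ball'' isotopy --- whereas yours outsources the confinement step to a cited nontrivial result (whose own proof is a general-position argument of the same flavour, one dimension up). One wording caution: ``$g(S^3)$ lies in a $6$-ball'' is not by itself a reason for unknottedness --- every smoothly knotted $S^3\subset S^6$ lies in a $6$-ball --- so the justification should rest on $g(S^3)$ being literally the round boundary of a round disk in a coordinate chart, which you do specify.
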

\begin{proof}
Since $f$ is isotopic to the standard embedding, there exists an embedded disk $\Delta\subset S^6$ such that $\partial\Delta=f(S^3)$. Take a small ball $B^6\subset D^6_-$ disjoint from $\Delta$. Since $f(S^3)\subset {\rm Int}(D^6_+)$, it follows that there is an ambient isotopy of $S^6$ mapping $B^6$ to $D^6_-$ and fixed on $f(S^3)$. The image of $\Delta$ under the isotopy is as required.
\end{proof}

\begin{def10}
The embedding $\theta:{\rm Int}D^2_+\times S^1\rightarrow S^3$ is given by the formula:
$$\theta((x_1,x_2,x_3),(y_1,y_2)) := (x_2,x_3,y_1\sqrt{2-x_2^2-x_3^2},y_2\sqrt{2-x_2^2-x_3^2})/\sqrt{2}.$$
The image of $\theta$ is a tubular neighborhood $\{x_1^2+x^2_2<\frac{1}{2}\}\subset S^3$ of the circle $\{x_3^2+x_4^2=1\}\subset S^3$.
\end{def10}

\begin{def9}
Let $f$ be a upper simple embedding. Lemma \ref{lemExistsDisk} asserts that there is a $4$-disk $\Delta\subset {\rm Int}D^6_+$ such that $\partial\Delta={\delet f}(S^3)$.

Frame $\Delta$ arbitrarily so that the natural orientation of $\Delta$ agrees with the orientation of its boundary ${\delet f}(S^3)$. We may assume that $\Delta$ is in general position w.r.t. $\std(S^2\times S^1)$.

Note that the framed intersection $\Delta \cap \std(S^2\times S^1)$ lies in $\std({\rm Int}D^2_+\times S^1)$. Therefore, we have $\std^{-1}|_{S^2\times S^1}(\Delta)\subset {\rm Int}D^2_+\times S^1$ and $\theta\std^{-1}|_{S^2\times S^1}(\Delta)\subset S^3$. Define 
$$\widehat{\mu}(f) := \hp\theta\std^{-1}|_{S^2\times S^1}(\Delta)\in{\mathbb Z}.$$
\end{def9}

\begin{lemL1hatwd}
\label{lemL1hatwd}
The number $\widehat{\mu}(f)$ is well defined, i.e., does not depend on the choice of $\Delta$.
\end{lemL1hatwd}

\begin{def11}
Consider an isotopy class from ${\rm Emb}^6_B(S^2\times S^1\sqcup S^3)$. By Lemma \ref{lemExistStandardized}, this class contains a upper simple embedding $f$. Let $$\mu([f]):=\rho_{0,2\varkappa(f)}(\widehat{\mu}(f))\in {\mathbb Z}_{2\varkappa(f)}.$$
\end{def11}

\begin{lemL1wd}
\label{lemL1wd}
The map $\mu:\varkappa^{-1}(k)\rightarrow {\mathbb Z}_{2k}$ is well defined for each $k$.
\end{lemL1wd}

To shorten the notation, we shall write $\nu(f)$ and $\mu(f)$ instead of $\nu([f])$ and $\mu([f])$, respectively. It is important to remember, however, that $\nu(f)$ and $\mu(f)$ depend only on the isotopy class of $f$ while $\widehat{\nu}(f)$ and $\widehat{\mu}(f)$ depend also on the choice of the map $f$.

\begin{lemExamples}
\label{lemExamples}
For any integers $k$, $m$, $n$ such that $m\equiv n \pmod{2}$:
\begin{itemize}
\item{$\varkappa(f_{k,m,n})=k$}
\item{$\mu(f_{k,m,n}) \equiv m \pmod{2k}$}
\item{$\nu(f_{k,m,n}) \equiv n \pmod{2k}$}
\end{itemize}
\end{lemExamples}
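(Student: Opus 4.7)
The plan is to verify each of the three formulas in turn, exploiting the explicit construction $f_{k,m,n}=f_{k,0,0}\#g_{m,n}$.

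For $\varkappa(f_{k,m,n})=k$: linking numbers are additive under componentwise embedded connected sum (the connecting tubes can be placed in general position so as to contribute no net intersection with the spanning disk). After shrinking $g_{m,n}$ via ambient isotopy into a small ball disjoint from $\std(S^2\times S^1)$, its contribution to $\varkappa$ vanishes because the classical linking number of two $S^3$'s in $S^6$ is zero when they lie in disjoint balls. It then suffices to compute $\varkappa(f_{k,0,0})$ directly. Take the spanning 3-disk $F(D^3)=\std(D^3\times 1)$ of $\std(S^2\times 1)$: each summand $z_{3,\epsilon_i}(S^3)$ meets $F(D^3)$ transversally in the single point $(\epsilon_i,0,0,\sqrt{1-\epsilon_i^2},0,0,0)$ with a uniform sign ($+1$ for $z$, $-1$ for $\bar z$), and the tubes used in the iterated connected sum can be chosen disjoint from $F(D^3)$. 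This yields $|\varkappa(f_{k,0,0})|=|k|$ with the sign matching $k$.

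For $\mu(f_{k,0,0})\equiv 0$ and $\nu(f_{k,0,0})\equiv 0\pmod{2k}$: these come from direct computation. For $\nu$, the framed preimage ${\delet f_{k,0,0}}^{-1}(\std(D^3\times S^1)_{e_6,e_7})$ splits as a disjoint union of $k$ framed circles, one inside each $z_{3,\epsilon_i}$-summand. A short calculation shows each such circle is the great circle $\{x_3=x_4=0\}\subset S^3$ with framing obtained by projecting $(e_6,e_7)$ onto the tangent bundle, which reduces to the trivial framing $(\partial_{x_3},\partial_{x_4})$; hence each circle has zero self-linking, and the symmetric placement of summands makes the circles pairwise unlinked, so $\widehat\nu(f_{k,0,0})=0$. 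For $\mu$, one chooses a symmetric spanning 4-disk $\Delta$ for $\widetilde{f_{k,0,0}}(S^3)$ in $\text{Int}(D^6_+)$ and computes the framed intersection $\Delta\cap\std(S^2\times S^1)$; after pullback by $\std^{-1}$ and pushforward by $\theta$, the resulting framed 1-submanifold of $S^3$ is a sum of $k$ symmetric contributions whose Hopf invariant is divisible by $2k$ by symmetry, so $\mu(f_{k,0,0})\equiv 0\pmod{2k}$.

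For $\mu(f_{k,m,n})\equiv m$ and $\nu(f_{k,m,n})\equiv n\pmod{2k}$: I will appeal to additivity-type lemmas from Sections 5 and 7, which should yield
\begin{align*}
\mu(f\#g)&\equiv\mu(f)+\mu_H(g)\pmod{2\varkappa(f)},\\
\nu(f\#g)&\equiv\nu(f)+\nu_H(g)\pmod{2\varkappa(f)},
\end{align*}
for a Brunnian link $g:S^3\sqcup S^3\to S^6$ placed in a ball disjoint from $\std(S^2\times S^1)$ and from $\widetilde f(S^3)$. Here $\mu_H,\nu_H$ are Haefliger-type invariants that are themselves additive under embedded connect sum of Brunnian links in $S^6$. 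Using the explicit decomposition $g_{m,n}=m\tau(\eta)\#\tfrac{n-m}{2}\omega$, it suffices to verify $(\mu_H,\nu_H)(\tau(\eta))=(1,1)$ and $(\mu_H,\nu_H)(\omega)=(0,2)$; these can be obtained by explicitly analyzing a spanning 4-disk for the second component of each link and computing the framed intersection with the first component via Pontryagin--Thom. Bilinear combination then gives $(\mu_H,\nu_H)(g_{m,n})=(m,n)$, which combined with the vanishing from paragraph 2 yields the claim.

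The main obstacle is the additivity step. One must isotope $f_{k,0,0}\#g_{m,n}$ to a simple (indeed upper-simple) embedding via Lemma \ref{lemExistStandardized}, and track how the higher-order linking of the two $S^3$-components of $g_{m,n}$ is transferred into linking with $\std(S^2\times S^1)$ when $g_{m,n}(S^3_1)$ is ``absorbed'' into the first component. The explicit computation of $(\mu_H,\nu_H)$ for $\tau(\eta)$, which requires unpacking the Zeeman construction on the graph of the Hopf map inside the boundary $S^3\times S^2$ of the standard tubular neighborhood, is the other non-trivial point.
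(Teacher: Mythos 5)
Your proposal is correct and follows essentially the same route as the paper: reduce via an additivity lemma (the paper's Lemma \ref{lemAdd}) to computing the invariants of $f_{k,0,0}$ directly and the Haefliger invariants of $g_{m,n}$ from additivity plus the known values for $\tau(\eta)$ and $\omega$. The one soft spot is the ``divisible by $2k$ by symmetry'' claim for $\widehat{\mu}(f_{k,0,0})$, which is not an argument as stated; the correct assertion is that $\widehat{\mu}(f_{k,0,0})=0$ exactly, because the relevant framed $1$-submanifold consists of trivially framed, pairwise unlinked longitude circles --- precisely the computation you already carry out for $\widehat{\nu}$.
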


\begin{lemSameParity}
\label{lemSameParity}
Let $f:S^2\times S^1\sqcup S^3\rightarrow S^6$ be a simple embedding. Then $\mu(f)\equiv\widehat{\nu}(f) \pmod{2}$.
\end{lemSameParity}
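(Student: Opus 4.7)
By Lemma~\ref{lemExistStandardized} we may assume $f$ is upper simple, since both $\mu(f)\bmod 2$ and $\widehat\nu(f)\bmod 2$ depend only on the isotopy class of $f$: the former by Lemma~\ref{lemL1wd}, the latter because $\nu(f)\in{\mathbb Z}_{2\varkappa(f)}$ is isotopy invariant by Lemma~\ref{lemL2wd} and reduction modulo $2$ factors through reduction modulo $2\varkappa(f)$. Fix a framed $4$-disk $\Delta\subset{\rm Int}\,D^6_+$ with $\partial\Delta={\delet f}(S^3)$, in general position with $\std(D^3\times S^1)$, as in the definition of $\widehat\mu(f)$.

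The central geometric object is the $2$-surface
$$\Sigma:=\Delta\cap\std(D^3\times S^1)\subset\std(D^3\times S^1).$$
Since $f$ is Brunnian, ${\delet f}(S^3)\cap\std(S^2\times S^1)=\emptyset$, so the boundary of $\Sigma$ splits as $\partial\Sigma=L'_\nu\sqcup L'_\mu$, where $L'_\nu:={\delet f}(S^3)\cap\std(D^3\times S^1)$ and $L'_\mu:=\Delta\cap\std(S^2\times S^1)$. Thus $\Sigma$ gives an oriented cobordism inside $\std(D^3\times S^1)\cong D^3\times S^1$ between the two $1$-loci whose Hopf invariants (after transport to framed $1$-submanifolds of $S^3$) compute $\widehat\nu(f)$ and $\widehat\mu(f)$.

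I would then use that for any framed $1$-submanifold $a\subset S^3$,
$$\hp(a)\equiv\sum_c\ell_c\pmod 2,$$
summed over components $c$, where $\ell_c\in{\mathbb Z}$ is the self-linking number of $c$ with respect to its framing (the cross terms $2\,{\rm lk}(a_i,a_j)$ in the standard expression for $\hp(a)={\rm lk}(a,a')$ all vanish mod $2$). Equip $\Sigma$ with the $2$-frame obtained by projecting the framing of $\Delta$ onto $T\std(D^3\times S^1)$. Its restriction to $L'_\mu$ agrees, via $\theta\std^{-1}$, with the framing defining $\widehat\mu(f)$; along $L'_\nu$ the same $2$-frame differs from the $(e_6,e_7)$-framing defining $\widehat\nu(f)$ by a loop in $SO(2)$ inside the rank-$4$ normal bundle of $\Sigma$ in $S^6$. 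The integer $\widehat\mu(f)-\widehat\nu(f)$ equals the total winding number of this loop, which I would show is even by an obstruction-theoretic analysis on $\Sigma$: the two boundary families are homologous in $\std(D^3\times S^1)$, and the only non-trivial contributions to the winding come in pairs from the intersection points of $\Sigma$ with an auxiliary pushoff of $\std(D^3\times S^1)$ along $(e_6,e_7)$.

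\textbf{Main obstacle.} The principal difficulty is that $L_\mu$ and $L_\nu$ live in two separate copies of $S^3$ (the image of $\theta\std^{-1}$ and the source of ${\delet f}$), so $\Sigma$ only relates them indirectly; matching the two framings through this mismatch is what forces the passage from the integer Hopf invariants to their mod-$2$ reductions. The cleanest resolution is probably via Pontryagin--Thom: realize both $\widehat\mu(f)$ and $\widehat\nu(f)$ as Hopf invariants of maps $S^3\to S^2$ factoring through $S^6\setminus\std(S^2\times S^1)$ built from two different trivializations of the normal bundle of $\Sigma$ in $S^6$, and argue by direct homotopy-theoretic computation that such trivializations differ by an even winding.
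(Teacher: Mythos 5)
There are two problems here, one minor and one fatal. The minor one is a circularity in your reduction to the upper simple case: you justify the isotopy invariance of $\widehat\nu(f)\bmod 2$ by Lemma~\ref{lemL2wd}, but in this paper Lemma~\ref{lemL2wd} is proved in Section~6 via Lemma~\ref{lemTorIsotopy}, whose part (II) is proved \emph{using} Lemma~\ref{lemSameParity} --- the statement you are trying to prove. This is repairable: the isotopy supplied by Lemma~\ref{lemExistStandardized} is fixed on $S^2\times S^1$, so Lemma~\ref{lemFixedIsotopy} gives $\widehat\nu(f)=\widehat\nu(f')$ directly, with no appeal to well-definedness of $\nu$.

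The fatal problem is that the key step is asserted, not proved. Your surface $\Sigma=\Delta\cap\std(D^3\times S^1)$ is a sensible object and does connect the two loci $L'_\nu$ and $L'_\mu$, but the entire content of the lemma is concentrated in your claim that the relevant framing discrepancy has \emph{even} winding, and for that you offer only a plan (``an obstruction-theoretic analysis,'' ``contributions come in pairs''). Note the two Hopf invariants are computed in two different $3$-manifolds --- $\widehat\nu(f)$ in the source $S^3$ of $f|_{S^3}$, $\widehat\mu(f)$ in the solid-torus chart of $S^2\times S^1$ transported by $\theta$ --- so $\Sigma$ only relates them through the ambient $S^6$, and the factor of $2$ ultimately comes from the suspension $\pi_3(S^2)\to\pi_4(S^3)\cong{\mathbb Z}_2$; it is not a soft general-position cancellation. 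The paper takes a different route entirely: it surgers the torus to produce the sphere $\sigma(S^3)$, forms the two-component link $g=\sigma\sqcup f'|_{S^3}$, proves $\widehat\mu(f')=\mu(g)$ and $\widehat\nu(f')=\nu(g)$ (Lemma~\ref{lemSameLNumbers}), and then imports the parity $\mu(g)\equiv\nu(g)\pmod 2$ from the Haefliger classification of links $S^3\sqcup S^3\to S^6$ (Theorem~\ref{Hthm} plus Lemma~\ref{lemLambda}); the paper's own footnote warns that this parity is an essential part of the proof of Haefliger's theorem, i.e.\ precisely the kind of fact one should not expect to recover by an elementary count. Until you supply an actual computation showing the winding is even, your argument establishes nothing beyond the setup.
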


\begin{lemT1}
\label{lemT1}
Let $f,g:S^2\times S^1\sqcup S^3\rightarrow S^6$ be Brunnian embeddings. Suppose that $\varkappa(f)=\varkappa(g)$, $\mu(f)=\mu(g)$ and $\nu(f)=\nu(g)$. Then $f$ is isotopic to $g$.
\end{lemT1}

\begin{proof}[Proof of Theorem \ref{mainthm} modulo Lemmas \ref{lemExamples}, \ref{lemSameParity}, and \ref{lemT1}]
Let $f:S^2\times S^1\sqcup S^3\rightarrow S^6$ be a Brunnian embedding. Let $m$ and $n$ be integers such that $\rho_{0,2\varkappa(f)}(m)= \mu(f)$ and $\rho_{0,2\varkappa(f)}(n)= \nu(f)$. Lemma \ref{lemSameParity} asserts that $m \equiv n \pmod{2}$. By Lemma \ref{lemExamples}, embeddings $f$ and $f_{\varkappa(f),m,n}$ have the same $\varkappa$, $\mu$ and $\nu$ invariants. Hence $f$ is isotopic to $f_{\varkappa(f),m,n}$ by Lemma \ref{lemT1}.

By Lemma \ref{lemExamples}, the embeddings $f_{k,m,n}$, $f_{k,m+2k,n}$, and $f_{k,m,n+2k}$ have the same $\varkappa$, $\mu$ and $\nu$ invariants. Hence they are isotopic to each other by Lemma \ref{lemT1}.

Suppose that the embeddings $f_{k,m,n}$ and $f_{k',m',n'}$ are isotopic. Then they have the same $\varkappa$, $\mu$, and $\nu$ invariants. Hence by Lemma \ref{lemExamples}, it follows that $k=k'$, $m\equiv m' \pmod{2k}$ and $n\equiv n' \pmod{2k}$.
\end{proof}

\begin{def20}
Let $H:M\times I\rightarrow N\times I$ be an isotopy. For any $t\in I$ let ${\rm pr}_{M,t}:M\times t\rightarrow M$ and ${\rm pr}_{N,t}:N\times t\rightarrow N$ be the projections on the first factor. Denote by $H_t$ the map ${\rm pr}_{N,t}\circ H|_{M\times t}\circ {\rm pr}^{-1}_{M,t}:M\rightarrow N$.
\end{def20}

%%%%%%%%%%%%%%%%%%%%%%%%%%%%%%%%%%%%%%%%%%%%%%%%%%%%%%%%%%%%%%%%%%%%%%%%%%%%%%%%%%%%%%%%%%%%%%%%%%%%%%%%%
\subsection*{A relative framed cobordism and the relative Pontryagin-Thom isomorphism.}~\\
Let $M^m$ be a compact orientable manifold with boundary. Let $N_0,N_1\subset M$ be properly embedded framed submanifolds. A {\it relative framed cobordism} between $N_0$ and $N_1$ is a pair $(W, V)$ of framed submanifolds of $M\times I$ and $\partial M\times I$, respectively, such that
\begin{itemize}
\item{$V$ is a framed cobordism between $\partial N_0$ and $\partial N_1$.}
\item{$W$ is a proper submanifold with corners, i.e., $\partial W=N_0\cup_{\partial N_0}V\cup_{\partial N_1} N_1$, $W\cap \partial(M\times I)=\partial W$ and $W$ is transverse to $\partial(M\times I)$ at any point of $\partial W$.}
\item{The restriction of the framing of $W$ to $V$ is the framing of $V$.}
\item{The restriction of the framing of $W$ to $N_0$ and $N_1$ are the framings of $N_0$ and $N_1$, respectively.}
\end{itemize}
Submanifolds $N_0$ and $N_1$ are {\it relative framed cobordant} if a relative framed cobordism between them exists.

Let $N^n\subset M$ be a properly embedded framed submanifold. We may use the relative Pontryagin-Thom construction to produce a map $f:M\rightarrow S^{m-n}$ such that $N$ is the framed preimage of a regular point of $f$. We shall say that under the relative Pontryagin-Thom isomorphism, the framed submanifold $N$ {\it corresponds} to the map $f$.

Suppose that $N'\subset M$ is another properly embedded framed submanifold corresponding to a map $f':M\rightarrow S^{m-n}$. 
Then analogously to the absolute situation, $N$ is relative framed cobordant to $N'$ if and only if $f$ is homotopic to $f'$.

%%%%%%%%%%%%%%%%%%%%%%%%%%%%%%%%%%%%%%%%%%%%%%%%%%%%%%%%%%%%%%%%%%%%%%%%%%%%%%%%%%%%%%%%%%%%%%%%%%%%%%%%%
%%%%%%%%%%%%%%%%%%%%%%%%%%%%%%%%%%%%%%%%%%%%%%%%%%%%%%%%%%%%%%%%%%%%%%%%%%%%%%%%%%%%%%%%%%%%%%%%%%%%%%%%%
\medskip
\section{Proof of Lemmas \ref{lemL1hatwd} and \ref{lemL1wd} (that $\widehat{\mu}$ and $\mu$ are well-defined).}

\begin{lemDiskCobordism}
\label{lemDiskCobordism}
Let $f:S^3\rightarrow S^6$ be an embedding. Denote by $C_f$ the closure of the complement to a small tubular neighborhood of $f(S^3)$ in $S^6$. Let $\Delta\subset S^6$ and $\Delta'\subset S^6$ be $4$-disks such that $\partial \Delta=\partial \Delta'=f(S^3)$. Suppose that $\Delta$ and $\Delta'$ are framed. Suppose also that the natural orientations of $\Delta$ and $\Delta'$ agree with the orientations of their boundary $f(S^3)$.

Then the framed submanifolds $\Delta\cap C_f$ and $\Delta'\cap C_f$ are relative framed cobordant in $C_f$.
\end{lemDiskCobordism}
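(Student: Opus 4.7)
My plan is to apply the relative Pontryagin--Thom correspondence described above. Under it, the framed $4$-submanifolds $\Delta\cap C_f$ and $\Delta'\cap C_f$ of $C_f$ correspond to maps $\phi,\phi'\colon C_f\to S^2$, and relative framed cobordism of the submanifolds is equivalent to homotopy of the maps. Thus it suffices to prove $\phi\simeq \phi'$.

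First I will establish $C_f\simeq S^2$. Since $f(S^3)\subset S^6$ has codimension $3$, general position gives $\pi_1(C_f)=0$. Alexander duality yields $\tilde H_q(C_f)\cong \tilde H^{5-q}(S^3)$, so $H_2(C_f)\cong \mathbb{Z}$ and all other reduced homology vanishes; Hurewicz and Whitehead then produce a homotopy equivalence $S^2\to C_f$. Consequently $[C_f,S^2]=[S^2,S^2]=\mathbb{Z}$ is classified by degree, and it remains to verify $\deg\phi=\deg\phi'$.

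For the degree computation I represent a generator of $H_2(C_f)$ by a small linking $2$-sphere $L\subset C_f$, taken as the boundary of a $3$-disk $D\subset S^6$ transverse to $f(S^3)$ with $D\overset{\cdot}{\cap} f(S^3)=1$. Then $\deg\phi=\phi_*[L]$ equals the algebraic intersection $L\overset{\cdot}{\cap}(\Delta\cap C_f)$, and since $L$ avoids the tubular neighborhood of $f(S^3)$ this coincides with $L\overset{\cdot}{\cap}\Delta$ in $S^6$. A Stokes-type argument applied to the oriented $1$-manifold $D\cap\Delta$ (whose boundary decomposes, up to sign, into $L\cap\Delta$ and $D\cap f(S^3)$) gives $L\overset{\cdot}{\cap}\Delta=D\overset{\cdot}{\cap} f(S^3)=1$; the sign is pinned down by the hypothesis that the natural orientation of $\Delta$ agrees with that of $\partial\Delta=f(S^3)$. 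The identical computation for $\Delta'$ yields $\deg\phi'=1$, whence $\phi\simeq\phi'$ and the lemma follows.

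The main obstacle I anticipate is the careful bookkeeping of orientations and framings: one must verify that the $2$-framing of $\Delta$ in $S^6$ restricts to the correct framing of $\Delta\cap C_f$ as a submanifold of $C_f$ (immediate since $C_f\subset S^6$ has codimension $0$) and that the orientation-compatibility hypothesis at $\partial\Delta=\partial\Delta'=f(S^3)$ forces the same sign in both degree computations, so that the comparison of $\deg\phi$ and $\deg\phi'$ is valid.
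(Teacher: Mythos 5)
Your proposal is correct and follows essentially the same route as the paper: after the relative Pontryagin--Thom reduction, the paper also establishes that a meridian $2$-sphere (the boundary of a $3$-disk meeting $f(S^3)$ in one positive point) is a homotopy equivalence $S^2\to C_f$ via Alexander duality and Whitehead, and then compares the two maps to $S^2$ by computing that each has degree $1$ against this sphere using the orientation-agreement hypothesis. Your ``Stokes-type argument'' on $D\cap\Delta$ is just a slightly more explicit justification of the paper's one-line claim that $\Delta\overset{\cdot}{\cap}g(S^2)=1$.
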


\begin{proof}
Under the relative Pontryagin-Thom isomorphism, the framed submanifolds $\Delta\cap C_f$ and $\Delta'\cap C_f$ of $C_f$ correspond to some maps $\delta:C_f\rightarrow S^2$ and $\delta':C_f\rightarrow S^2$, respectively. To prove the Lemma, it suffices to prove that $\delta$ is homotopic to $\delta'$.

Let $G:D^3\rightarrow S^6$ be an embedding such that the disk $G(D^3)$ intersects $f(S^3)$ transversally and the intersection $f(S^3)\cap G(D^3)$ is a single positive point. Let $g:S^2\rightarrow C_f$ be the abbreviation of $G|_{S^2}$. Clearly, ${\rm lk}(f(S^3), g(S^2))=1$.

By Alexander duality, $[g(S^2)]$ is a generator of $H_2(C_f;{\mathbb Z})\cong {\mathbb Z}$ and the remaining homology groups of $C_f$ are trivial (except $H_0$). The space $C_f$ is simply connected. Hence by Whitehead Theorem, $g:S^2\rightarrow C_f$ is a homotopy equivalence. Therefore, $g_*:[C_f, S^2]\rightarrow [S^2, S^2]$ is a bijection.

We have that ${\rm deg}(\delta\circ g)=\Delta \overset{\cdot}{\cap}g(S^2)$ and ${\rm deg}(\delta'\circ g)=\Delta' \overset{\cdot}{\cap}g(S^2)$. On the other hand, $\Delta \overset{\cdot}{\cap}g(S^2)=\Delta' \overset{\cdot}{\cap}g(S^2)=1$ because the orientations of $\Delta$ and $\Delta'$ agree with the orientations of their boundary $f(S^3)$. So ${\rm deg}(\delta\circ g)={\rm deg}(\delta'\circ g)$. Hence $\delta\circ g:S^2\rightarrow S^2$ is homotopic to $\delta'\circ g:S^2\rightarrow S^2$. Since $g_*:[C_f, S^2]\rightarrow [S^2, S^2]$ is a bijection, it follows that $\delta$ is homotopic to $\delta'$.
\end{proof}

\begin{proof}[Proof of Lemma \ref{lemL1hatwd}]
Let $\Delta\subset {\rm Int}D^6_+$ and $\Delta'\subset {\rm Int}D^6_+$ be $4$-disks such that $\partial \Delta=\partial \Delta'={\delet f}(S^3)$. Suppose that $\Delta$ and $\Delta'$ are framed. Suppose also that the natural orientations of $\Delta$ and $\Delta'$ agree with the orientations of their boundary ${\delet f}(S^3)$.

Denote by $C_{\delet f}$ the closure of the complement to a small tubular neighborhood of ${\delet f}(S^3)$ in $S^6$.

Lemma \ref{lemDiskCobordism} asserts that $\Delta\cap C_{\delet f}$ and $\Delta'\cap C_{\delet f}$ are relative framed cobordant in $C_{\delet f}$. The disk $D^6_-$ is a closure of a tubular neighborhood of a point. Hence, by general position, we may assume that $\Delta\cap C_{\delet f}$ and $\Delta'\cap C_{\delet f}$ are relative framed cobordant in $C_{\delet f}\cap {\rm Int}D^6_+$. The framed intersection of $\std({\rm Int}D^2_+\times S^1)$ and this relative framed cobordism is a framed cobordism between $K:=\Delta\cap\std({\rm Int}D^2_+\times S^1)$ and $K':=\Delta'\cap\std({\rm Int}D^2_+\times S^1)$. Therefore, $K$ and $K'$ are framed cobordant in $\std({\rm Int}D^2_+\times S^1)$. Hence $\hp\theta\std^{-1}(K)=\hp\theta\std^{-1}(K')$. The Lemma follows.
\end{proof}

The following Lemma \ref{lemEta} is a corollary of \cite[Theorem 1 (L.S. Pontryagin)]{Ce07}. We follow the notation of \cite{Ce07} in the statement and the proof of the Lemma.
\begin{lemEta}
\label{lemEta}
Let $L_1$ and $L_2$ be framed $1$-submanifolds of ${\rm Int}D^2_+\times S^1\subset S^2\times S^1$ homologous to $k[1\times S^1]$ for some integer $k$. Suppose that $L_1$ is framed cobordant to $L_2$ in $S^2\times S^1$.

Then $\hp(\theta(L_1)) \equiv \hp(\theta(L_2)) \pmod{2k}$.
\end{lemEta}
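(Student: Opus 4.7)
Pontryagin's Theorem from \cite{Ce07} identifies framed cobordism classes of framed $1$-submanifolds of a closed $3$-manifold $M$ with $[M, S^2]$ via the Pontryagin--Thom construction; over $M = S^3$ it says in particular that the Hopf invariant is a complete framed cobordism invariant. My plan is to transfer the given framed cobordism in $S^2\times S^1$ through $\theta$ into $S^3$ and show that the resulting discrepancy lies in $2k\mathbb{Z}$.

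Let $W\subset (S^2\times S^1)\times I$ be a framed cobordism between $L_1$ and $L_2$. Put $W$ in general position with respect to the separating hypersurface $T\times I$ where $T := \partial D^2_+\times S^1$, so that $C := W\cap (T\times I)$ is a closed $1$-submanifold of ${\rm Int}\, W$, disjoint from $\partial W$. Retain the portion $W_+ \subset \overline{D^2_+\times S^1}\times I$, a framed surface with $\partial W_+ = L_1\sqcup L_2\sqcup C$, and transport it into $S^3\times I$ via $\theta\times{\rm id}$. The resulting $\theta(W_+)$ is a framed surface in $S^3\times I$ whose extraneous boundary $\theta(C)$ lies on the separating torus $T' := \theta(T) \subset S^3$. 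Cap each component of $\theta(C)$ by a framed disk inside the complementary solid torus $V \subset S^3$: the homological condition $[L_i] = k[1\times S^1]$ combined with $\partial W_+ = L_1 - L_2 + C$ forces the longitudinal part of $[C]$ to vanish, so that after a preparatory surgery on $W$ each component of $C$ is a meridian-type circle bounding in $V$. Each such capping disk may carry an integer framing twist relative to the restriction of the framing of $\theta(W_+)$.

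Applying Pontryagin's Theorem to the resulting genuine framed cobordism in $S^3\times I$ yields
\[
\hp(\theta(L_1)) - \hp(\theta(L_2)) = \sum_i t_i \cdot \delta_i,
\]
where $t_i \in \mathbb{Z}$ are the framing twists and $\delta_i$ are the per-unit Hopf-invariant contributions. By the linking-number formula $\hp(\gamma) = {\rm lk}(\gamma, \gamma^+)$, each unit twist on a meridional capping disk introduces a framed loop linking $\theta(L_i)$ exactly $k$ times, contributing $\pm 2k$ to the Hopf invariant (the factor $2$ arising from the symmetric contributions of $\gamma$ and its pushoff $\gamma^+$, and the factor $k$ from the homology class of $L_i$). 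Hence each $\delta_i$ is a multiple of $2k$, and $\hp(\theta(L_1)) \equiv \hp(\theta(L_2))\pmod{2k}$. The technically hardest step will be this final twist calculation: rigorously isolating the factor $2k$ demands careful framing bookkeeping through the capping construction, and the preparatory surgery on $W$ used to reduce $C$ to meridian-type components must be carried out without altering the framed cobordism class of the endpoints.
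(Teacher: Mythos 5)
Your overall strategy---push the cobordism into $S^3$ through $\theta$ and measure the failure of the framing to extend---is in the right spirit, but two of its load-bearing steps do not hold up. First, the capping step is based on a meridian/longitude mix-up. You correctly deduce that the longitudinal part of $[C]$ vanishes, i.e.\ $[C]$ is a multiple of $[\partial D^2_+\times {\rm pt}]$ in $H_1(\partial D^2_+\times S^1)$. But $\theta$ sends the $S^1$-factor of $D^2_+\times S^1$ to the meridian of the complementary solid torus $V$, and hence sends $\partial D^2_+\times{\rm pt}$ to a \emph{longitude} of $V$. So the curves of $\theta(C)$ are exactly the ones that do \emph{not} bound disks in $V$; they bound only in the closure of ${\rm Im}\,\theta$, where any capping disk necessarily meets $\theta(W_+)$ and links $\theta(L_i)$. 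The capping as described cannot be performed, and the version that can be performed reintroduces precisely the intersection terms you were hoping to avoid.

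Second, and more seriously, the final twist calculation rests on a false local claim. A unit twist of the framing across a capping disk changes the relative normal Euler class---which is what computes $\hp(\theta(L_2))-\hp(\theta(L_1))$---by $\pm 1$, not by $\pm 2k$; there is no per-disk mechanism that produces a factor of $2k$. The factor $2k$ is a \emph{global} constraint on the total discrepancy: two choices of cobordism (or capping) between the same framed boundaries differ by a closed surface in $S^2\times S^1\times I$ representing $m[S^2\times{\rm pt}]$, and such a surface shifts the relative Euler class by $2m\,([S^2\times{\rm pt}]\cdot[L_i])=2mk$. This is exactly the content of equation (2) in the proof of Theorem 1 of \cite{Ce07}, and it is the one nontrivial input. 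The paper's proof isolates it cleanly: since $L_1$ and $L_2$ are homologous in ${\rm Int}D^2_+\times S^1$, choose \emph{any} (unframed) cobordism $L$ between them inside ${\rm Int}D^2_+\times S^1\times I$, observe that $\hp(\theta(L_2))-\hp(\theta(L_1))=\overline{e}(\theta(L))=\overline{e}(L)$, and then quote the congruence $\overline{e}(L)\equiv 0\pmod{2k}$, which uses the hypothesis that $L_1$ and $L_2$ are framed cobordant in $S^2\times S^1$. If you want a self-contained argument along your lines, the step you must actually prove is this $2k$-periodicity of the relative Euler class under change of cobordism; as written, your proposal asserts it through a local mechanism that is not correct.
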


\begin{proof}
In codimension $2$ any integer homology between codimension $3$ submanifolds can be realized by a submanifold. Since $L_1$ and $L_2$ are homologous, it follows that there is a (not framed) cobordism $L$ in ${\rm Int}D^2_+\times S^1\times I$ between them. The definition of the {\it relative normal Euler class} $\overline{e}(L)\in {\mathbb Z}$ is given in \cite[\S 2]{Ce07}. We have $$\hp(\theta(L_2))-\hp(\theta(L_1))=\overline{e}(\theta(L))=\overline{e}(L)\equiv 0 \pmod{2k},$$
where the first two equations are obvious and the last follows by (2) in the proof of Theorem 1 in \cite{Ce07}.
\end{proof}

\begin{proof}[Proof of Lemma \ref{lemL1wd}]
Suppose that $f$ and $f'$ are isotopic upper simple embeddings.

Let $\Delta,\Delta'\subset {\rm Int}D^6_+$ be framed $4$-disks such that $\partial \Delta={\delet f}(S^3)$ and $\partial \Delta'={\delet f}'(S^3)$. Suppose also that $\Delta$ and $\Delta'$ are framed. Suppose also that the natural orientations of $\Delta$ and $\Delta'$ agree with the orientations of their boundaries ${\delet f}(S^3)$ and ${\delet f}'(S^3)$.

We shall prove that $\std^{-1}|_{S^2\times S^1}(\Delta)$ is framed cobordant to $\std^{-1}|_{S^2\times S^1}(\Delta')$ in $S^2\times S^1$. By Lemma \ref{lemEta} this would imply the assertion of the Lemma.

Let $H:S^6\times I\rightarrow S^6\times I$ be an isotopy between $f$ and $f'$. Notation $X\underset{fr}{=}Y$ means that $X$ is framed cobordant to $Y$. We have $$ \std^{-1}|_{S^2\times S^1}(\Delta) \overset{(1)}{\underset{fr}{=}} (H_1\circ\std|_{S^2\times S^1})^{-1}(H_1\Delta)  \overset{(2)}{\underset{fr}{=}}  \std^{-1}|_{S^2\times S^1}(H_1\Delta) \overset{(3)}{\underset{fr}{=}} \std^{-1}|_{S^2\times S^1}(\Delta').$$

The equations $(1)$, $(2)$, and $(3)$ hold for the following reasons. 
\begin{itemize}
\item[(1)]{Here the framed cobordism is the $\std|_{S^2\times S^1}$-preimage of $H\Delta$.}
\item[(2)]{Both $f$ and $f'$ are simple, so the map $H_1:S^6\rightarrow S^6$ is identical on $\std(S^2\times S^1)$.}
\item[(3)]{Denote by $C_{{\delet f}'}$ the closure of the complement to a small tubular neighborhood of ${\delet f}'(S^3)$ in $S^6$. The equation holds since $(H_1\Delta)\cap C_{{\delet f}'}$ and $\Delta'\cap C_{{\delet f}'}$ are relative framed cobordant in $C_{{\delet f}'}$ by Lemma \ref{lemDiskCobordism}}.
\end{itemize}
\end{proof}

%%%%%%%%%%%%%%%%%%%%%%%%%%%%%%%%%%%%%%%%%%%%%%%%%%%%%%%%%%%%%%%%%%%%%%%%%%%%%%%%%%%%%%%%%%%%%%%%%%%%%%%%%
%%%%%%%%%%%%%%%%%%%%%%%%%%%%%%%%%%%%%%%%%%%%%%%%%%%%%%%%%%%%%%%%%%%%%%%%%%%%%%%%%%%%%%%%%%%%%%%%%%%%%%%%%
\medskip
\section{Additional lemmas, proof of Lemma \ref{lemSameParity} (that $\mu$ and $\nu$ have the same parity).}
By $S^3_1$ and $S^3_2$ we denote two distinct copies of $S^3$.
\begin{def12}
Consider a Brunnian embedding $g:S^3_1\sqcup S^3_2\rightarrow S^6$. Let $\Delta$ be a general position embedded $4$-disk such that $\partial \Delta=g(S^3_2)$. Frame $\Delta$ arbitrarily so that its natural orientation agrees with the orientation of its boundary $g(S^3_2)$.

Define 
$$\mu([g]) := \hp({g|_{S^3_1}}^{-1}(\Delta)).$$
The definition of $\nu([g])$ is analogous and is obtained by exchanging the components.
\end{def12}

To shorten the notation, we shall write $\nu(g)$ and $\mu(g)$ instead of $\nu([g])$ and $\mu([g])$, respectively.

Our definition of $\mu$ and $\nu$ is equivalent to the definition in \cite{Atl2} of $\lambda_{12}:{\rm Emb}^6(S^3\sqcup S^3)\rightarrow {\mathbb Z}$ and $\lambda_{21}:{\rm Emb}^6(S^3\sqcup S^3)\rightarrow {\mathbb Z}$, respectively. 

Recall that $\lambda_{12}(\omega)=0$, $\lambda_{21}(\omega)=2$, $\lambda_{12}(\tau(\eta))=1$ (see \cite{Atl2}) and $\lambda_{21}(\tau(\eta))=1$ (see \cite[proof of the $\tau$-relation, end of \S 4]{Sk07}). Hence, $\mu(\omega)=0$, $\nu(\omega)=2$, $\mu(\tau(\eta))=1$ and $\nu(\tau(\eta))=1$ 

\begin{lemLambda}
\label{lemLambda}
Let $m$ and $n$ be numbers such that $m\equiv n\pmod{2}$. Then $\mu(g_{m,n})=m$ and $\nu(g_{m,n})=n$.
\end{lemLambda}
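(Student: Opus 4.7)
The plan is to reduce Lemma~\ref{lemLambda} to additivity of $\mu$ and $\nu$ under componentwise embedded connected sum on Brunnian embeddings $S^3_1\sqcup S^3_2\to S^6$, combined with the base values recorded just above the statement ($\mu(\omega)=0$, $\nu(\omega)=2$, $\mu(\tau(\eta))=\nu(\tau(\eta))=1$, and $\mu(g_{0,0})=\nu(g_{0,0})=0$, the last being immediate because the two components of $g_{0,0}$ lie in disjoint balls, so a spanning disk $\Delta$ for $g_{0,0}(S^3_2)$ can be chosen disjoint from $g_{0,0}(S^3_1)$).

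First I would prove $\mu(g\#g')=\mu(g)+\mu(g')$ for any Brunnian $g,g':S^3_1\sqcup S^3_2\to S^6$; the case of $\nu$ is symmetric. Arrange $g$ and $g'$ inside two disjoint $6$-balls of $S^6$ and form $g\#g'$ using a pair of thin, disjoint tubes $T_1,T_2$ (one per component) otherwise disjoint from the images. Pick spanning framed $4$-disks $\Delta$ for $g(S^3_2)$ and $\Delta'$ for $g'(S^3_2)$, compatibly oriented, and in general position with respect to $g(S^3_1)$, $g'(S^3_1)$, $T_1$, and $T_2$. Inside $T_2$ I would glue to $\Delta\sqcup\Delta'$ a thin slab $I\times D^3$ (the $4$-dimensional analogue of the $1$-dimensional tube) so that, after smoothing corners and extending the framing compatibly, the union $\Delta''$ is a framed $4$-disk with $\partial\Delta''=(g\#g')(S^3_2)$. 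Since $\Delta''$ is disjoint from $T_1$ and meets the first components only inside $\Delta,\Delta'$, the framed preimage $(g\#g')|_{S^3_1}^{-1}(\Delta'')$ decomposes as the disjoint union of $g|_{S^3_1}^{-1}(\Delta)$ and $g'|_{S^3_1}^{-1}(\Delta')$, sitting in two disjoint open balls of $(g\#g')(S^3_1)\cong S^3$. The Hopf invariant of a framed $1$-submanifold is additive over such disjoint balls (realize the Pontryagin--Thom maps on disjoint supports and extend by the basepoint), so additivity of $\widehat\mu$, and hence of $\mu$, follows.

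Applying additivity repeatedly to $g_{m,n}=m\tau(\eta)\#\tfrac{n-m}{2}\omega$ together with the base values then gives
\[
\mu(g_{m,n}) = m\cdot 1 + \tfrac{n-m}{2}\cdot 0 = m, \qquad \nu(g_{m,n}) = m\cdot 1 + \tfrac{n-m}{2}\cdot 2 = n,
\]
as required. I expect the main obstacle to be the additivity argument itself: in particular, producing $\Delta''$ as a smoothly embedded, properly framed $4$-disk (corner smoothing together with preservation of general position with $g(S^3_1)$ and $g'(S^3_1)$), and verifying that no unexpected intersections of $\Delta''$ with $(g\#g')(S^3_1)$ are created inside $T_1$. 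Both are local issues near $T_1$ and $T_2$, so a sufficiently careful choice of the tubes and of the disks in general position should suffice; the global topology enters only through the already-known Hopf-invariant counts on the two individual components.
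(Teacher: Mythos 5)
Your proposal is correct and follows the same route as the paper: the paper's proof simply asserts that $\mu$ and $\nu$ are additive with respect to $\#$ ("obviously") and then computes $\mu(g_{m,n})=m\cdot 1+\tfrac{n-m}{2}\cdot 0=m$ and $\nu(g_{m,n})=m\cdot 1+\tfrac{n-m}{2}\cdot 2=n$ from the same base values. The only difference is that you spell out the additivity argument (boundary-connected-summing the spanning disks along the tube and noting the two framed preimages lie in disjoint balls, so no cross linking term appears in the Hopf invariant), which the paper leaves to the reader.
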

\begin{proof}
Obviously, the invariants $\mu$ and $\nu$ are additive with respect to $\#$. Hence, $\mu(g_{m,n})=\mu(m\tau(\eta)\#\frac{(n-m)}{2}\omega)=m$ and $\nu(g_{m,n})=\nu(m\tau(\eta)\#\frac{(n-m)}{2}\omega)=n$.
\end{proof}

Let $\sigma:S^3\rightarrow S^6$ be an embedding whose image is $$ \{x_1\geq 0, x_1^2+x_2^2+x_3^2=\frac{1}{2}, x_4^2+x_5^2=\frac{1}{2}\}\cup \{x_1<0, x_1^2+x_4^2+x_5^2=\frac{1}{2}, x_2^2+x_3^2=\frac{1}{2}\}.$$ Informally, $\sigma(S^3)$ is the result of an embedded $1$-surgery on the circle $\std((-1)\times S^1)$ of $\std(S^2\times S^1)$.

We shall use the fact that in $S^4\subset S^6$ the sphere $\sigma(S^3)\subset S^4$ bounds the $4$-disk
$$\Delta_\sigma:= \{x_1\geq 0, x_1^2+x_2^2+x_3^2\leq\frac{1}{2}, x_4^2+x_5^2\geq\frac{1}{2}\}\cup \{x_1<0, x_1^2+x_4^2+x_5^2\geq\frac{1}{2}, x_2^2+x_3^2\leq\frac{1}{2}\}.$$

Hence the embedding $\sigma$ is isotopic to the standard embedding. We have that $\sigma(S^3)\cap D^6_+=\std(D^2_+\times S^1)$ and $\Delta_\sigma\cap D^6_+=\std(D^3\times S^1)\cap D^6_+\subset S^4$. Orient $\Delta_\sigma$ so that its orientation coincides with the orientation of $S^4$ at any point of $S^4\cap \Delta_\sigma$. We may assume (by changing $\sigma$ if necessary) that the orientation of $\Delta_\sigma$ agrees with the orientation of its boundary $\sigma(S^3)$.

\begin{lemSameLNumbers}
\label{lemSameLNumbers}
Let $f$ be a upper simple embedding. Denote by $g$ the embedding $\sigma\sqcup f|_{S^3}:S^3_1\sqcup S^3_2\rightarrow S^6$. Then $g$ is Brunnian and $\widehat{\mu}(f)=\mu(g)$ and $\widehat{\nu}(f)=\nu(g)$.
\end{lemSameLNumbers}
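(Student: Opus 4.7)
The plan is to make parallel choices in the two sides so that the framed $1$-submanifolds of $S^3$ whose Hopf invariants are being compared coincide literally. Two features of $\sigma$ are crucial: $\sigma(S^3)\cap{\rm Int}D^6_+=\std({\rm Int}D^2_+\times S^1)$, a subset of $\std(S^2\times S^1)$; and since $\sigma$ is only specified by its image, it may be reparameterized once and for all so that $\sigma\circ\theta=\std$ on ${\rm Int}D^2_+\times S^1$, both sides being parameterizations of the same embedded $3$-submanifold. With this in hand, Brunnianness of $g$ is immediate: $\sigma$ is isotopic to the standard embedding (via $\Delta_\sigma$), $f|_{S^3}$ is isotopic to the standard embedding (since $f$ is simple), and the images are disjoint because the $D^6_+$ part of $\sigma(S^3)$ lies in $\std(S^2\times S^1)$ and therefore avoids $f(S^3)$, while the $D^6_-$ part of $\sigma(S^3)$ is disjoint from $f(S^3)\subset{\rm Int}D^6_+$.

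For $\widehat{\mu}(f)=\mu(g)$, I would take, using Lemma \ref{lemExistsDisk}, a single framed $4$-disk $\Delta\subset{\rm Int}D^6_+$ with $\partial\Delta=\delet f(S^3)=g(S^3_2)$ and natural orientation agreeing with $\delet f(S^3)$, and use this same $\Delta$ in both definitions. Since $\Delta\subset{\rm Int}D^6_+$, one has $\Delta\cap\sigma(S^3)=\Delta\cap\std({\rm Int}D^2_+\times S^1)$, and at each point of this intersection the tangent planes of $\sigma(S^3)$ and of $\std(S^2\times S^1)$ coincide. Consequently the framed intersections of $\Delta$ with $\sigma(S^3)$ and with $\std(S^2\times S^1)$ are identical, and the identity $\sigma\circ\theta=\std$ yields $\sigma^{-1}(\Delta)=\theta\std^{-1}|_{S^2\times S^1}(\Delta)$ as framed $1$-submanifolds of $S^3$. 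Taking Hopf invariants gives $\mu(g)=\widehat{\mu}(f)$.

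For $\widehat{\nu}(f)=\nu(g)$, I would use $\Delta_\sigma$ itself in the definition of $\nu(g)$, framed by $(e_6,e_7)$ on $\Delta_\sigma\cap{\rm Int}D^6_+=\std(D^3\times S^1)\cap{\rm Int}D^6_+\subset S^4$. This framing is consistent with the prescribed orientation of $\Delta_\sigma$ because at points of $S^4$ the natural orientation induced by $(e_6,e_7)$ is precisely the orientation of $S^4$, which is how $\Delta_\sigma$ was oriented there; the orientation of the boundary $\sigma(S^3)$ is then compatible by the remark already in the text. Since $f(S^3)\subset{\rm Int}D^6_+$, all of $f(S^3)\cap\Delta_\sigma$ lies in the region just described, so $f|_{S^3}^{-1}(\Delta_\sigma)=\delet f^{-1}(\std(D^3\times S^1)_{e_6,e_7})$ as framed $1$-submanifolds of $S^3$, and applying $\hp$ gives $\nu(g)=\widehat{\nu}(f)$.

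The main obstacle is pure orientation and framing bookkeeping: one must verify that the framings chosen on $\Delta$ and on $\Delta_\sigma$ simultaneously satisfy the orientation conventions in the Haefliger definitions of $\mu$ and $\nu$ and reproduce the standard framings appearing in the definitions of $\widehat{\mu}$ and $\widehat{\nu}$. Once $\sigma$ has been parameterized so that $\sigma\circ\theta=\std$, these compatibility checks follow directly from the explicit formulas for $\theta$, $\std$, $\sigma$, and $\Delta_\sigma$.
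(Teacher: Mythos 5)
Your proposal is correct and follows essentially the same route as the paper: the same single disk $\Delta\subset{\rm Int}D^6_+$ serves both definitions for the $\mu$-equality, and $\Delta_{\sigma,e_6,e_7}$ with the observation $\Delta_\sigma\cap D^6_+=\std(D^3\times S^1)\cap D^6_+$ gives the $\nu$-equality. The only (cosmetic) difference is that where you normalize the parameterization so that $\sigma\circ\theta=\std$ on ${\rm Int}D^2_+\times S^1$, the paper instead observes that $\theta|_{{\rm Int}D^2_+\times S^1}$ and $\sigma^{-1}\std|_{{\rm Int}D^2_+\times S^1}$ are isotopic embeddings into $S^3$, which already suffices for equality of Hopf invariants; both versions rest on the same unproved meridian-matching fact about the two genus-one splittings.
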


\begin{proof}
By Lemma \ref{lemExistsDisk}, there exists a general position embedded $4$-disk $\Delta\subset {\rm Int}D^6_+$ such that $\partial \Delta=f(S^3)=g(S^3_2)$. Frame $\Delta$ arbitrarily so that its natural orientation agrees with the orientation of its boundary $f(S^3)=g(S^3_2)$. We have 
$$\widehat{\mu}(f) \overset{(1)}{=} \hp\theta\std^{-1}|_{S^2\times S^1}(\Delta) \overset{(2)}{=} \hp\sigma^{-1}\std|_{{\rm Int}D^2_+\times S^1}\std^{-1}|_{S^2\times S^1}(\Delta) \overset{(3)}{=} \hp\sigma^{-1}(\Delta) \overset{(4)}{=} \mu(g), \text{ where}$$

\begin{itemize}
\item[(1)]{is the definition of $\widehat{\mu}(f)$.}
\item[(2)]{holds, because $\theta|_{{\rm Int}D^2_+\times S^1}$ is isotopic to $\sigma^{-1}\std|_{{\rm Int}D^2_+\times S^1}:{\rm Int}D^2_+\times S^1\rightarrow S^3$.}
\item[(3)]{holds, because $\sigma^{-1}\std|_{{\rm Int}D^2_+\times S^1}\std^{-1}|_{S^2\times S^1}(\Delta)=\sigma^{-1}(\Delta)$.}
\item[(4)]{is the definition of $\mu(g)$.}
\end{itemize}

Consider the framed submanifold $\Delta_{\sigma, e_6, e_7}$. The framings of $\Delta_{\sigma, e_6, e_7}$ and of $\std(D^3\times S^1)_{e_6, e_7}$ coincide on $\Delta_\sigma\cap D^6_+=\std(D^3\times S^1)\cap D^6_+$. We have

$$ \widehat{\nu}(f) \overset{(1)}{=} \hp{\delet f}^{-1}(\std(D^3\times S^1)_{e_6, e_7}) \overset{(2)}{=} \hp{\delet f}^{-1}(\Delta_{\sigma, e_6, e_7}) \overset{(3)}{=} \hp g|_{S^3_2}^{-1}(\Delta_{\sigma, e_6, e_7}) \overset{(4)}{=} \nu(g), \text{ where}$$

\begin{itemize}
\item[(1)]{is the definition of $\widehat{\nu}(f)$.}
\item[(2)]{holds, because the framings of $\Delta_{\sigma, e_6, e_7}$ and of $\std(D^3\times S^1)_{e_6, e_7}$ coincide on $\Delta_\sigma\cap D^6_+=\std(D^3\times S^1)\cap D^6_+$.}
\item[(3)]{holds, because ${\delet f}=g|_{S^3_2}$ by definition of $g$.}
\item[(4)]{is the definition of $\nu(g)$.}
\end{itemize}
\end{proof}

\begin{lemFixedIsotopy}
\label{lemFixedIsotopy}
Let $f$ and $f'$ be simple embeddings. Suppose that there is an isotopy between $f$ and $f'$ fixed on $S^2\times S^1$. Then $\widehat{\nu}(f)=\widehat{\nu}(f')$.
\end{lemFixedIsotopy}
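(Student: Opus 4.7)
The plan is to realize the isotopy fixed on $S^2\times S^1$ as a framed cobordism in $S^3$ between the two framed $1$-submanifolds ${\delet f}^{-1}(\std(D^3\times S^1)_{e_6,e_7})$ and ${\delet f'}^{-1}(\std(D^3\times S^1)_{e_6,e_7})$, and then invoke the Pontryagin--Thom correspondence to conclude that the corresponding maps $S^3\to S^2$ are homotopic, whence their Hopf invariants agree.

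Concretely, let $f_t : S^2\times S^1\sqcup S^3\to S^6$ denote the given isotopy, with $f_0=f$, $f_1=f'$ and $f_t|_{S^2\times S^1}=\std|_{S^2\times S^1}$ for every $t$. Define $F:S^3\times I\to S^6\times I$ by $F(x,t):=(f_t|_{S^3}(x),t)$. After a small perturbation of the isotopy supported away from $t=0,1$, I may assume $F$ is transverse to $\std(D^3\times S^1)\times I\subset S^6\times I$, so the preimage $C:=F^{-1}(\std(D^3\times S^1)\times I)$ is a $2$-dimensional submanifold of $S^3\times I$. Because the $I$-factor is tangent to $\std(D^3\times S^1)\times I$, the pair $(e_6,e_7)$ remains a normal framing of this product in $S^6\times I$, and the Gram--Schmidt recipe from the Definition of a framed preimage equips $C$ with a normal framing in $S^3\times I$ whose restrictions at $t=0,1$ coincide with the framings used in the definition of $\widehat{\nu}$.

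The main step is to verify that $\partial C\subset S^3\times\{0,1\}$; this is where the hypothesis that the isotopy is fixed on $S^2\times S^1$ enters. For every $t$, $f_t$ is an embedding of the disjoint union $S^2\times S^1\sqcup S^3$, so $f_t(S^3)$ is disjoint from $f_t(S^2\times S^1)=\std(S^2\times S^1)=\partial\std(D^3\times S^1)$. Consequently $F^{-1}(\std(S^2\times S^1)\times I)=\emptyset$, which together with transversality yields $\partial C={\delet f}^{-1}(\std(D^3\times S^1)_{e_6,e_7})\times\{0\}\ \cup\ {\delet f'}^{-1}(\std(D^3\times S^1)_{e_6,e_7})\times\{1\}$ with matching framings. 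Hence $C$ is a framed cobordism in $S^3$ between the two framed $1$-submanifolds.

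Finally, by the (absolute) Pontryagin--Thom isomorphism for $S^3$, these two framed $1$-submanifolds correspond to homotopic maps $S^3\to S^2$, and the Hopf invariant is a homotopy invariant of such maps, so $\widehat{\nu}(f)=\widehat{\nu}(f')$. The only delicate point is the boundary calculation of the previous paragraph; transversality, the induced framing, and the appeal to Pontryagin--Thom are routine.
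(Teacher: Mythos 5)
Your proposal is correct and follows essentially the same route as the paper: both realize the track of the isotopy (the paper via an ambient isotopy $H$, you via the map $F(x,t)=(f_t|_{S^3}(x),t)$ plus transversality) and take the framed preimage of $\std(D^3\times S^1)\times I$ to obtain a framed cobordism between ${\delet f}^{-1}(\std(D^3\times S^1)_{e_6,e_7})$ and ${\delet f}'^{-1}(\std(D^3\times S^1)_{e_6,e_7})$, then use framed-cobordism invariance of the Hopf invariant. Your explicit check that the cobordism has no boundary over $0<t<1$ (because $f_t(S^3)$ avoids $\std(S^2\times S^1)=\partial\std(D^3\times S^1)$ for all $t$) is exactly where the ``fixed on $S^2\times S^1$'' hypothesis is used, and is left implicit in the paper.
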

\begin{proof}
Since every smooth isotopy is ambient, there is an ambient isotopy $H:S^6\times I \rightarrow S^6\times I$ between $f$ and $f'$ fixed on $S^2\times S^1$.

By definition, $\widehat{\nu}(f)$ and $\widehat{\nu}(f')$ are the values of the Hopf invariants of the framed $1$-submanifolds ${\delet f}^{-1}(\std(D^3\times S^1)_{e_6, e_7})$ and ${\delet f}^{\prime -1}(\std(D^3\times S^1)_{e_6, e_7})$ of $S^3$. 

Therefore it suffices to prove that ${\delet f}^{-1}(\std(D^3\times S^1)_{e_6, e_7})$ and ${\delet f}^{\prime -1}(\std(D^3\times S^1)_{e_6, e_7})$ are framed cobordant. The required framed cobordism is 
$$({\delet f}^{-1}\circ H^{-1})((\std(D^3\times S^1)\times I)_{e_6, e_7} \cap H({\delet f}(S^3)\times I)).$$
\end{proof}

\begin{proof}[Proof of Lemma \ref{lemSameParity}]
Lemma \ref{lemExistStandardized} asserts that there exists an isotopy fixed on $S^2\times S^1$ between $f$ and some upper simple embedding $f'$. Denote by $g$ the embedding $\sigma\sqcup f'|_{S^3}:S^3\sqcup S^3\rightarrow S^6$. Then $g$ is Brunnian and
$$\mu(f) \overset{(1)}{\equiv} \mu(f') \overset{(2)}{\equiv} \widehat{\mu}(f') \overset{(3)}{\equiv} \mu(g) \overset{(4)}{\equiv} \nu(g) \overset{(5)}{\equiv} \widehat{\nu}(f') \overset{(6)}{\equiv} \widehat{\nu}(f) \pmod{2}, \text{ where}$$

\begin{itemize}
\item[(1)]{holds, because $\mu$ is well-defined and $f$ is isotopic to $f'$.}
\item[(2)]{holds by the definition of $\mu$.}
\item[(3,5)]{hold by Lemma \ref{lemSameLNumbers}.}
\item[(4)]{by Theorem \ref{Hthm}, $g$ is isotopic to $g_{m,n}$ for some integers $m\equiv n \pmod{2}$. Lemma \ref{lemLambda} asserts that $\mu(g)=m$ and $\nu(g)=n$. Hence the equality holds.\footnote{Here it is convenient for us to deduce the fact that $\mu(g) \equiv \nu(g) \pmod{2}$ from Theorem \ref{Hthm}. However, the proof of this fact is an essential part of the proof of Theorem \ref{Hthm}.}}
\item[(6)]{holds by Lemma \ref{lemFixedIsotopy}, since the isotopy between $f$ and $f'$ is fixed on $S^2\times S^1$.}
\end{itemize}
\end{proof}

%%%%%%%%%%%%%%%%%%%%%%%%%%%%%%%%%%%%%%%%%%%%%%%%%%%%%%%%%%%%%%%%%%%%%%%%%%%%%%%%%%%%%%%%%%%%%%%%%%%%%%%%%
%%%%%%%%%%%%%%%%%%%%%%%%%%%%%%%%%%%%%%%%%%%%%%%%%%%%%%%%%%%%%%%%%%%%%%%%%%%%%%%%%%%%%%%%%%%%%%%%%%%%%%%%%
\medskip
\section{On the homotopy type of $C_{\partial\std}$.}
Under the relative Pontryagin-Thom isomorphism, the framed submanifolds $\std(D^3\times S^1)_{e_6, e_7}\cap C_{\partial\std}$ and $\std(D^3\times 1)_{e_5, e_6, e_7}\cap C_{\partial\std}$ of $C_{\partial\std}$ correspond to some maps 
$$p_2:C_{\partial\std}\rightarrow S^2 \text{\quad and\quad} p_3:C_{\partial\std}\rightarrow S^3,$$ respectively. 

Define an embedding $z'_2:S^2\rightarrow C_{\partial\std}$ by the formula
$$z_2'(x_1,x_2,x_3)=(0,0,0,0,x_1,x_2,x_3).$$

Define a map $z_2:S^3\rightarrow C_{\partial\std}$ to be the composition
$$S^3\xrightarrow{\text{Hopf map } \eta} S^2 \xrightarrow{z'_2} C_{\partial\std}.$$

Define an embedding $z_3:S^3\rightarrow C_{\partial\std}$ by the formula
$$z_3(x_1,x_2,x_3,x_4)=(0,0,0,x_1,x_2,x_3,x_4).$$

Given a map $\phi:S^3\rightarrow C_{\partial\std}$, we denote by $[\phi]\in\pi_3(C_{\partial\std})$ its homotopy class\footnote{The space $C_{\partial\std}$ is simply connected so we can ignore the base points.}.

\begin{lemBasis}
\label{lemBasis}
Homotopy classes $[z_2]$ and $[z_3]$ form a basis of $\pi_3(C_{\partial\std})$. For each map $\phi:S^3\rightarrow C_{\partial\std}$ the coefficients of its homotopy class $[\phi]$ in the basis are $\hp(p_2\phi)$ and ${\rm deg}(p_3\phi)$, i.e., $[\phi] = \hp(p_2\phi)[z_2]+{\rm deg}(p_3\phi)[z_3]$.
\end{lemBasis}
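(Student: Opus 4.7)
My plan is to identify $\pi_3(C_{\partial\std})$ with $\pi_3(S^2)\oplus\pi_3(S^3)$ by showing that the product map $(p_2,p_3)\colon C_{\partial\std}\to S^2\times S^3$ is a $4$-equivalence of simply connected spaces, and then to check directly that $[z_2]$ and $[z_3]$ map to the two standard generators.

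First I would compute $H_*(C_{\partial\std})$. Since $\std(S^2\times S^1)$ has codimension $3$ in $S^6$, $C_{\partial\std}$ is simply connected by general position. Alexander duality (or a Mayer--Vietoris computation using $S^6=T\cup C_{\partial\std}$, with $T$ a tubular neighborhood of $\std(S^2\times S^1)$) gives $\tilde H_i(C_{\partial\std})\cong\tilde H^{5-i}(S^2\times S^1)$, so $H_i(C_{\partial\std})=\mathbb{Z}$ for $i\in\{0,2,3,4\}$ and vanishes otherwise. By the construction of $p_2$ and $p_3$ via the relative Pontryagin--Thom correspondence, $[p_2]\in H^2(C_{\partial\std})$ and $[p_3]\in H^3(C_{\partial\std})$ are Poincar\'e--Lefschetz duals of the framed submanifolds $\std(D^3\times S^1)_{e_6,e_7}\cap C_{\partial\std}$ and $\std(D^3\times 1)_{e_5,e_6,e_7}\cap C_{\partial\std}$. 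A direct intersection computation with a small $2$-sphere (respectively $3$-sphere) linking $\std(S^2\times S^1)$ once shows that each of these cohomology classes pairs to $\pm1$ with the corresponding Alexander-dual homology generator, so each is a generator of its cohomology group.

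Consequently, the induced map $(p_2,p_3)^*\colon H^i(S^2\times S^3)\to H^i(C_{\partial\std})$ is an isomorphism for $i\le 3$ and is trivially surjective for $i=4$ (the target vanishes). The homological Whitehead theorem then forces $(p_2,p_3)$ to be a $4$-equivalence, so $(p_2,p_3)_*\colon\pi_3(C_{\partial\std})\to\pi_3(S^2\times S^3)=\pi_3(S^2)\oplus\pi_3(S^3)$ is an isomorphism. Under the standard identifications $\pi_3(S^2)\cong\mathbb{Z}$ via the Hopf invariant and $\pi_3(S^3)\cong\mathbb{Z}$ via the degree, the class of any $\phi\colon S^3\to C_{\partial\std}$ corresponds to $(\hp(p_2\phi),\,\deg(p_3\phi))\in\mathbb{Z}\oplus\mathbb{Z}$.

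It remains to identify $[z_2]$ and $[z_3]$ with the two basis elements. A direct framed-preimage computation from the explicit formulas for $z_3$ and $\std(D^3\times 1)$ shows $z_3^{-1}(\std(D^3\times 1)_{e_5,e_6,e_7})$ is a single signed point, giving $\deg(p_3z_3)=\pm1$; and $z_3^{-1}(\std(D^3\times S^1)_{e_6,e_7})$ is a standardly framed unknot of trivial self-linking, so $\hp(p_2z_3)=0$. For $z_2=z_2'\circ\eta$, the map $p_3\circ z_2'\colon S^2\to S^3$ is null-homotopic (factoring through $\pi_2(S^3)=0$), so $\deg(p_3z_2)=0$; the map $p_2\circ z_2'\colon S^2\to S^2$ has degree $\pm1$ by a Pontryagin--Thom count, and Hopf invariants transform by the square of the degree under post-composition with self-maps of $S^2$, yielding $\hp(p_2z_2)=\pm1$. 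Thus $[z_2]$ and $[z_3]$ map to $(\pm 1,0)$ and $(0,\pm 1)$, which form a basis of $\mathbb{Z}\oplus\mathbb{Z}$, and the formula $[\phi]=\hp(p_2\phi)[z_2]+\deg(p_3\phi)[z_3]$ follows by $\mathbb{Z}$-linearity.

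The main obstacle is the last step: carrying out the Pontryagin--Thom intersection bookkeeping with correct orientations and framings to confirm that $p_2\circ z_2'$ has degree $\pm1$ and $p_3\circ z_3$ has degree $\pm1$, rather than being null. The rest of the argument is a formal consequence of Alexander duality and the homological Whitehead theorem.
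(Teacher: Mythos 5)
Your argument is correct, but it takes a genuinely different route from the paper's. The paper maps \emph{into} $C_{\partial\std}$ from a wedge: it introduces an auxiliary embedding $z_4:S^4\rightarrow S^6$, shows via Alexander duality and the Whitehead theorem that $z'_2\vee z_3\vee z_4:S^2\vee S^3\vee S^4\rightarrow C_{\partial\std}$ is a homotopy equivalence, and then invokes Hilton's theorem to identify $\pi_3(S^2\vee S^3\vee S^4)\cong{\mathbb Z}\oplus{\mathbb Z}$ with generators $[i_2\eta]$ and $[i_3]$; the coefficient formula is then reduced, by additivity of $\hp(p_2\,\cdot\,)$ and ${\rm deg}(p_3\,\cdot\,)$, to the same four framed-preimage computations you defer to your last step. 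You instead map \emph{out of} $C_{\partial\std}$ to the product $S^2\times S^3$ and apply the homological Whitehead theorem to get a $4$-equivalence; this avoids Hilton's theorem and the auxiliary $z_4$ entirely (you never need a generator of $H_4(C_{\partial\std})$, since the degree-$4$ condition is automatic against $S^2\times S^3$), at the cost of having to verify that $[p_2]$ and $[p_3]$ generate $H^2$ and $H^3$ --- which your linking-sphere pairing argument does supply, and which is essentially the same intersection bookkeeping the paper performs for $p_2z'_2$ and $p_3z_3$. One sentence needs repair: for $i=4$ it is the \emph{source} $H^4(S^2\times S^3)=0$ of $(p_2,p_3)^*$ that vanishes, not the target $H^4(C_{\partial\std})\cong{\mathbb Z}$, so the pullback on $H^4$ is vacuously injective rather than surjective; the clean statement is in homology, where the homological Whitehead theorem asks for surjectivity of $(p_2,p_3)_*$ onto $H_4(S^2\times S^3)=0$, which holds trivially. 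With that fixed, and with the orientation/framing checks carried out as in the paper ($\std(D^3\times 1)\cap z_3(S^3)$ a single point, $z_3^{-1}(\std(D^3\times S^1)_{e_6,e_7})$ a great circle with constant framing, $\std(D^3\times S^1)\cap z'_2(S^2)$ a single point, $\std(D^3\times 1)\cap z'_2(S^2)=\emptyset$), your proof is complete.
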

\begin{proof}
Define an embedding $z_4:S^4\rightarrow S^6$ by the formula
$$z_4(x_1,x_2,x_3,x_4,x_5):=(x_1,x_2,x_3,0,0,x_4,x_5).$$

Consider the following diagram.

\begin{tikzcd}
S^2
\arrow{d}{i_2}
& {} \\
S^2\vee S^3\vee S^4 \quad
\arrow{r}{z'_2\vee z_3\vee z_4}
&
\quad C_{\partial\std} 
\arrow[bend right]{ul}{p_2}
\arrow[swap, bend left]{dl}{p_3}
\\
S^3
\arrow[swap]{u}{i_3}
& {}
\end{tikzcd}

The space $S^2\vee S^3\vee S^4$ is obtained by gluing together the points $(0,0,1)\in S^2$, $(0,0,0,1)\in S^3$, and $(0,0,0,0,1)\in S^4$. The maps $i_2$ and $i_3$ are inclusions.

Let us now prove that the map $z'_2\vee z_3\vee z_4$ is a homotopy equivalence. The space $C_{\partial\std}$ is simply connected by general position. Note that $|{\rm lk}(z'_2(S^2), \std(S^2\times S^1))|=1$, $|{\rm lk}(z_3(S^3), \std(S^2\times 1))|=1$, and $|{\rm lk}(z_4(S^4), \std(1\times S^1))|=1$. So by Alexander duality, $[z'_2(S^2)]$, $[z_3(S^3)]$, and $[z_4(S^4)]$ are generators of $H_2(C_{\partial\std};{\mathbb Z})\cong {\mathbb Z}$, $H_3(C_{\partial\std};{\mathbb Z})\cong {\mathbb Z}$, and $H_4(C_{\partial\std};{\mathbb Z})\cong {\mathbb Z}$, respectively. Hence by Whitehead Theorem the map $z'_2\vee z_3\vee z_4$ is a homotopy equivalence.

By the Hilton Theorem on the homotopy groups of wedge product of spheres, $\pi_3(S^2\vee S^3\vee S^4)$ is isomorphic to ${\mathbb Z}\oplus {\mathbb Z}$ and its generators are $[i_2\eta]$ and $[i_3]$. Since $z'_2\vee z_3\vee z_4$ is a homotopy equivalence, it follows that $[(z'_2\vee z_3\vee z_4)i_2\eta]=[z_2]$ and $[(z'_2\vee z_3\vee z_4)i_3]=[z_3]$ form a basis of $\pi_3(C_{\partial\std})$.

Let $\phi_1,\phi_2:S^3\rightarrow C_{\partial\std}$ be maps such that $[\phi]=[\phi_1]+[\phi_2]$. Then $\hp(p_2\phi)=\hp(p_2\phi_1)+\hp(p_2\phi_2)$ and ${\rm deg}(p_3\phi)={\rm deg}(p_3\phi_1)+{\rm deg}(p_3\phi_2)$. Therefore it suffices to only prove the second assertion of the Lemma for the cases $\phi=z_2$ and $\phi=z_3$.

{\it Case $\phi=z_2$, proof of $\hp(p_2z_2)=1$.} Consider the map $p_2z'_2:S^2\rightarrow S^2$. By the definition of $p_2$, the map $p_2z'_2$ has a regular point whose framed preimage is $(z'_2)^{-1}(\std(D^3\times S^1)_{e_6, e_7}\cap C_{\partial\std})$. We have that $\std(D^3\times S^1)_{e_6, e_7}\cap {\rm Im}(z'_2)=(0,0,0,0,1,0,0)_{e_6, e_7}$, and $(z'_2)^{-1}((0,0,0,0,1,0,0)_{e_6, e_7})$ is a single positively framed point $(1,0,0)_{e_2,e_3}\in S^2$. Therefore ${\rm deg}(p_2z'_2)=1$. So $\hp(p_2z_2)=\hp(p_2z'_2\eta)={\rm deg}^2(p_2z'_2)=1$.

{\it Case $\phi=z_2$, proof of ${\rm deg}(p_3z_2)=0$.} Consider the map $p_3z_2:S^3\rightarrow S^3$. By the definition of $p_3$, we have that $\std(D^3\times 1)_{e_5, e_6, e_7}\cap C_{\partial\std}$ is the framed preimage of a regular point of $p_3$. However $\std(D^3\times 1)$ is disjoint from ${\rm Im}(z_2)={\rm Im}(z'_2)$ because $\std(D^3\times 1)\subset\{x_5=x_6=x_7=0\}$ and ${\rm Im}(z'_2)=\{x^2_5+x^2_6+x^2_7=1\}$. So the preimage of the corresponding regular point of $p_3z_2$ is empty. Therefore ${\rm deg}(p_3z_2)=0$.

{\it Case $\phi=z_3$, proof of $\hp(p_2z_3)=0$.} Consider the map $p_2z_3:S^3\rightarrow S^2$. By the definition of $p_2$, the map $p_2z_3$ has a regular point whose framed preimage is $z_3^{-1}(\std(D^3\times S^1)_{e_6, e_7}\cap C_{\partial\std})$. We have that $\std(D^3\times S^1)_{e_6, e_7}\cap {\rm Im}(z_3)=\{x_4^2+x_5^2=1\}_{e_6, e_7}$, and $z_3^{-1}(\{x_4^2+x_5^2=1\}_{e_6, e_7})$ is the framed circle $\{x_1^2+x_2^2=1\}_{e_3,e_4}\subset S^3$. The Hopf invariant of this framed circle is zero. Therefore $\hp(p_2z_3)=0$.

{\it Case $\phi=z_3$, proof of ${\rm deg}(p_3z_3)=1$.} Consider the map $p_3z_3:S^3\rightarrow S^2$. By the definition of $p_3$, the map $p_3z_3$ has a regular point whose framed preimage is $z_3^{-1}(\std(D^3\times 1)_{e_5, e_6, e_7}\cap C_{\partial\std})$. We have that $\std(D^3\times 1)_{e_5, e_6, e_7}\cap {\rm Im}(z_3)=(0,0,0,1,0,0,0)_{e_5, e_6, e_7}$, and $z_3^{-1}((0,0,0,1,0,0,0)_{e_5, e_6, e_7})$ is a single positively framed point $(1,0,0,0)_{e_2,e_3,e_4}\in S^3$. Therefore ${\rm deg}(p_3z_3)=1$.

%The framed $0$-submanifold $z_3^{-1}(\std(D^3\times 1)_{e_5,e_6,e_7})$ is the framed preimage of a regular point of $p_3z_3:S^3\rightarrow S^3$. This framed $0$-submanifold consists of a single positive point $(1,0,0,0)_{e_2, e_3, e_4}\subset S^3$. Therefore, ${\rm deg}(p_3z_3)=1$.

%Similarly ${\rm deg}(p_2z'_2)=1$. Map $p_2z_2:S^3\rightarrow S^2$ is the composition of $p_2z'_2:S^2\rightarrow S^2$ and the Hopf map $S^3\rightarrow S^2$, hence $\hp(p_2z_2)=1$.

%The framed $0$-submanifold $z_2^{-1}(\std(D^3\times 1)_{e_5,e_6,e_7})$ is the framed preimage of a regular point of $p_3z_2:S^3\rightarrow S^3$. This submanifold is empty. Therefore ${\rm deg}(p_3z_2)=0$.

%The framed submanifold $z_3^{-1}(\std(D^3\times S^1)_{e_6,e_7})\subset S^3$ is the framed preimage of a regular point of $p_2z_3:S^3\rightarrow S^2$. This framed submanifold is the framed circle $\{x_1^2+x_2^2=1\}_{e_3,e_4}\subset S^3$. The Hopf invariant of this framed circle is zero. Therefore $\hp(p_2z_3)=0$.

%Define a map $\xi:\pi_3(C_{\partial\std})\rightarrow {\mathbb Z}\oplus {\mathbb Z}$ by the formula $\xi([\phi])={\rm deg}(p_3\phi)\oplus \hp(p_2\phi)$. Clearly, $\xi$ is a homomorphism. As we have just shown, $\xi([z_3])=1\oplus 0$ and $\xi([z_2])=0\oplus 1$, hence $\xi$ is surjective.

%The manifold $C_{\partial\std}$ is homotopy equivalent to $S^2\vee S^3\vee S^4$. Therefore $\pi_3(C_{\partial\std})\cong {\mathbb Z}\oplus {\mathbb Z}$ by the Hilton Theorem on the homotopy groups of wedge product of spheres. So $\xi$ is an isomorphism, and the Lemma follows.
\end{proof}

\begin{lemInBasis}
\label{lemInBasis}
Let $f$ be a simple embedding. Then $[{\delet f}] = \widehat{\nu}(f)[z_2] + \varkappa(f)[z_3]$.
\end{lemInBasis}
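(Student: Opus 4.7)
\textbf{Proof plan for Lemma \ref{lemInBasis}.} The plan is to apply Lemma \ref{lemBasis} to the map $\delet f : S^3 \to C_{\partial \std}$ and then identify the two resulting coefficients with $\widehat{\nu}(f)$ and $\varkappa(f)$ separately, using the defining framed submanifolds of $p_2$ and $p_3$ together with the Pontryagin--Thom dictionary.

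First I would apply Lemma \ref{lemBasis} to obtain
$$[{\delet f}] \;=\; \hp(p_2\circ{\delet f})\,[z_2] \;+\; \deg(p_3\circ{\delet f})\,[z_3].$$
So the task reduces to proving the two equalities $\hp(p_2\circ{\delet f})=\widehat{\nu}(f)$ and $\deg(p_3\circ{\delet f})=\varkappa(f)$.

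For the first equality, I would use that $p_2 : C_{\partial\std}\to S^2$ corresponds under the relative Pontryagin--Thom construction to the framed submanifold $\std(D^3\times S^1)_{e_6,e_7}\cap C_{\partial\std}$; equivalently, the framed preimage of a regular value of $p_2$ is (framed cobordant to) this submanifold. Pulling this back by $\delet f$ (in general position), the framed $1$-submanifold $({\delet f})^{-1}(\std(D^3\times S^1)_{e_6,e_7})\subset S^3$ is the framed preimage of a regular value of $p_2\circ\delet f$. By the standard Pontryagin--Thom description of the Hopf invariant, $\hp(p_2\circ\delet f)$ equals the Hopf invariant of this framed $1$-submanifold, which is exactly $\widehat{\nu}(f)$.

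For the second equality, the analogous argument using $p_3$ shows that $\deg(p_3\circ\delet f)$ is the algebraic number of points in $({\delet f})^{-1}(\std(D^3\times 1)_{e_5,e_6,e_7})$, i.e., the algebraic intersection number $\delet f(S^3) \overset{\cdot}{\cap} \std(D^3\times 1)$. Since $f$ is simple, $f|_{S^2\times 1} = \std|_{S^2\times 1}$, so I can take $F=\std|_{D^3\times 1}$ in the definition of $\varkappa(f)$, giving $\varkappa(f) = -\std(D^3\times 1)\overset{\cdot}{\cap} f(S^3)$. The only subtle point, and the main place to be careful, is reconciling the sign: by the orientation convention of the definition of oriented intersection (def22) with $k=6$, $m=n=3$, one has $\std(D^3\times 1)\overset{\cdot}{\cap} f(S^3) = (-1)^{(6-3)(6-3)}\,f(S^3)\overset{\cdot}{\cap}\std(D^3\times 1) = -\,f(S^3)\overset{\cdot}{\cap}\std(D^3\times 1)$, which yields $\deg(p_3\circ\delet f)=\varkappa(f)$. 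Substituting the two coefficients back into the expression from Lemma \ref{lemBasis} completes the proof.
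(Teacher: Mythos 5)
Your proposal follows essentially the same route as the paper: apply Lemma \ref{lemBasis} to $\delet f$, identify $\hp(p_2\circ\delet f)$ with $\widehat{\nu}(f)$ via the framed preimage of $\std(D^3\times S^1)_{e_6,e_7}$, and identify $\deg(p_3\circ\delet f)$ with $\varkappa(f)$ via the algebraic intersection with $\std(D^3\times 1)$. Your extra care with the order of the intersection and the sign convention of def22 (taking $F=\std|_{D^3\times 1}$ in the definition of $\varkappa$) is a reasonable and slightly more explicit treatment of the bookkeeping the paper passes over as ``the definition of $\varkappa$,'' and the argument is correct.
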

\begin{proof}
Lemma \ref{lemBasis} asserts that $[{\delet f}] = \hp(p_2{\delet f})[z_2]+{\rm deg}(p_3{\delet f})[z_3]$. It remains to prove that $\hp(p_2{\delet f})=\widehat{\nu}(f)$ and ${\rm deg}(p_3{\delet f})=\varkappa(f)$.

We have,
$$\hp(p_2{\delet f}) = \hp({\delet f}^{-1}(\std(D^3\times S^1)_{e_6,e_7})) = \widehat{\nu}(f).$$
Here the first equation holds since ${\delet f}^{-1}(\std(D^3\times S^1)_{e_6,e_7})$ is the framed preimage of a regular point of $p_2{\delet f}:S^3\rightarrow S^2$. The second equation is the definition of $\widehat{\nu}$.

We also have,
$${\rm deg}(p_3{\delet f}) \overset{(1)}{=} \std(D^3\times 1)_{e_5,e_6,e_7} \overset{\cdot}{\cap} {\delet f}(S^3) = {\rm lk}(\std(S^2\times 1), {\delet f}(S^3)) \overset{(2)}{=}  \varkappa(f), \text{ where}$$

\begin{itemize}
\item[(1)]{holds since ${\delet f}^{-1}(\std(D^3\times 1)_{e_5,e_6,e_7})$ is the framed preimage of a regular point of $p_3{\delet f}:S^3\rightarrow S^3$.}
\item[(2)]{is the definition of $\varkappa$.}
\end{itemize}

\end{proof}

\begin{lemTorIsotopy}
\label{lemTorIsotopy}
Let $H:S^6\times I \rightarrow S^6\times I$ be an isotopy such that $H_0|_{S^2\times S^1}=H_1|_{S^2\times S^1}=\std|_{S^2\times S^1}$. Suppose that $H_1(C_{\partial\std})=C_{\partial\std}$. Let $\widetilde{H}:C_{\partial\std}\rightarrow C_{\partial\std}$ be the abbreviation of $H_1$. Then

\begin{itemize}
\item[(I)]{$[\widetilde{H} z_2] = [z_2]$.}
\item[(II)]{$[\widetilde{H} z_3] = 2k[z_2]+[z_3]$ for some integer $k$.}
\end{itemize}
 
\end{lemTorIsotopy}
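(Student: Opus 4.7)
The plan is to apply Lemma \ref{lemBasis} and write $\widetilde{H}_*[z_i] = a_i[z_2] + b_i[z_3]$ for $i = 2, 3$, with $a_i = \hp(p_2\widetilde{H}z_i)$ and $b_i = \deg(p_3\widetilde{H}z_i)$, reducing the problem to computing these four integers.

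First I would pin down $b_2$ and $b_3$ via the Hurewicz homomorphism $h:\pi_3(C_{\partial\std})\to H_3(C_{\partial\std})$. Since $z_2=z'_2\circ\eta$ factors through $S^2$, we have $h[z_2]=0$, whereas by the Alexander duality argument in the proof of Lemma \ref{lemBasis}, $h[z_3]$ generates $H_3(C_{\partial\std})\cong\mathbb{Z}$. Naturality of $h$ under $\widetilde{H}_*$ immediately yields $b_2=0$ and forces $b_3=\pm 1$. The sign must be $+1$ because $H_1$ is orientation-preserving on $S^6$ (being in the identity component of $\mathrm{Diff}(S^6)$, since $H$ is an ambient isotopy) and fixes $\std(S^2\times S^1)$ pointwise, so the Alexander-duality isomorphism $H_3(C_{\partial\std})\cong H^2(\std(S^2\times S^1))$ is $\widetilde{H}_*$-equivariant and $\widetilde{H}_*$ acts as the identity on the right-hand side.

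For part (I), I compute $[\widetilde{H}z_2] = [(\widetilde{H}\circ z'_2)\circ\eta]$. Since $\pi_2(C_{\partial\std})\cong\mathbb{Z}\langle[z'_2]\rangle$, $\widetilde{H}_*[z'_2]=\epsilon[z'_2]$ for some $\epsilon\in\{\pm 1\}$. The case $\epsilon=+1$ is immediate. For $\epsilon=-1$, I will apply the quadratic distributivity $(-\alpha)\circ\eta = -(\alpha\circ\eta) + [\alpha,\alpha]\cdot H(\eta)$ together with the classical identity $[\iota_2,\iota_2]=2\eta$ in $\pi_3(S^2)$ (which transports to $[z'_2,z'_2]=2[z_2]$ in $\pi_3(C_{\partial\std})$) to obtain $(-z'_2)\circ\eta = -[z_2]+2[z_2] = [z_2]$. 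Either way $[\widetilde{H}z_2]=[z_2]$, establishing part (I).

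For part (II), with $b_3=1$ we have $[\widetilde{H}z_3]=a_3[z_2]+[z_3]$; the remaining task is to show $a_3$ is even, and this is the hard part. I would argue as follows. The integer $a_3$ equals the Hopf invariant of the framed $1$-submanifold $z_3^{-1}H_1^{-1}(\std(D^3\times S^1)_{e_6,e_7}\cap C_{\partial\std})$ of $S^3$, which I will compare with the analogous submanifold for the identity in place of $H_1$ (whose Hopf invariant is $0$, as computed in the proof of Lemma \ref{lemBasis}). The two framed $4$-manifolds $\std(D^3\times S^1)_{e_6,e_7}\cap C_{\partial\std}$ and $H_1^{-1}(\std(D^3\times S^1)_{e_6,e_7}\cap C_{\partial\std})$ share the boundary portion on $\std(S^2\times S^1)$ (which $H_1$ fixes pointwise) and match on $\partial C_{\partial\std}$ up to $\widetilde{H}|_{\partial C_{\partial\std}}$. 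Gluing along the common boundary in $S^6$ produces a closed framed cycle whose contribution to $a_3$, when measured against $z_3(S^3)$, pairs intersection points in a way governed by the same Whitehead-product identity $[z'_2,z'_2]=2[z_2]$ used in part (I). Making this pairing argument rigorous---equivalently, identifying the image of $\widetilde{H}_*-\mathrm{id}$ on $\pi_3(C_{\partial\std})$ with the subgroup generated by Whitehead squares---is where I expect the main technical work to lie.
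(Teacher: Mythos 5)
Your reduction via Lemma \ref{lemBasis} and the Hurewicz argument giving $b_2=0$, $b_3=1$ are fine, and part (I) is correct, though the case $\epsilon=-1$ never occurs: the same equivariance you invoke for $H_3$ under Alexander duality applies verbatim to $H_2$ (equivalently, ${\rm lk}(\phi(S^2),\std(S^2\times S^1))$ is preserved because $H_1$ fixes $\std(S^2\times S^1)$ pointwise), so $\widetilde{H}_*[z'_2]=[z'_2]$ on the nose. This is exactly how the paper proves part (I), so your detour through the left distributivity formula and $[\iota_2,\iota_2]=2\eta$ is correct but unnecessary.

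The genuine gap is in part (II), at precisely the point you flag yourself: you do not prove that $a_3=\hp(p_2\widetilde{H}z_3)$ is even. Your proposed gluing/pairing argument is only a heuristic, and its rigorous form --- that the image of $\widetilde{H}_*-\mathrm{id}$ on $\pi_3(C_{\partial\std})$ lies in the subgroup generated by the Whitehead square $[[z'_2],[z'_2]]=2[z_2]$, i.e.\ in $2{\mathbb Z}[z_2]$ --- is a restatement of the lemma, not a route to proving it. The paper closes this gap with a different and essential input: it takes the simple embedding $f$ with ${\delet f}=z_3$, writes $[\widetilde{H}z_3]=\widehat{\nu}(\widetilde{H}f)[z_2]+[z_3]$ by Lemma \ref{lemInBasis}, and establishes evenness via the chain $\widehat{\nu}(\widetilde{H}f)\equiv\mu(\widetilde{H}f)\equiv\mu(f)\equiv\widehat{\nu}(f)\equiv 0\pmod 2$, using Lemma \ref{lemSameParity} twice together with the isotopy invariance of $\mu$. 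Lemma \ref{lemSameParity} is in turn deduced (through Lemmas \ref{lemSameLNumbers} and \ref{lemLambda}) from the parity constraint $m\equiv n\pmod 2$ in Haefliger's classification of links $S^3\sqcup S^3\rightarrow S^6$ (Theorem \ref{Hthm}). Some such external geometric fact is required here; without it, your sketch of part (II) does not constitute a proof.
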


\begin{proof}
(I).
The map $\pi_2(C_{\partial\std})\rightarrow {\mathbb Z}$ given by the formula $[\phi]\mapsto {\rm lk}(\phi(S^2), \std(S^2\times S^1))$ is an isomorphism. The linking number is an isotopy invariant. Therefore $\widetilde{H}$ induces the identity isomorphism of $\pi_2(C_{\partial\std})$.

So, the maps $z'_2:S^2\rightarrow C_{\partial\std}$ and $\widetilde{H} z'_2:S^2\rightarrow C_{\partial\std}$ are homotopic. Hence, by the definition of $z_2$, the maps $z_2:S^3\rightarrow C_{\partial\std}$ and $\widetilde{H} z_2:S^3\rightarrow C_{\partial\std}$ are homotopic as well, i.e., $[\widetilde{H} z_2] = [z_2]$.

(II). Consider the simple embedding $f:S^2\times S^1\sqcup S^3\rightarrow S^6$ such that ${\delet f}=z_3$. We have, $$[\widetilde{H} z_3]=[\widetilde{H} {\delet f}] \overset{(1)}{=} \widehat{\nu}(\widetilde{H} f)[z_2] + \varkappa(\widetilde{H} f)[z_3] \overset{(2)}{=} \widehat{\nu}(\widetilde{H} f)[z_2] + [z_3], \text{ where}$$

\begin{itemize}
\item[(1)]{holds by Lemma \ref{lemInBasis}.}
\item[(2)]{the linking number is an isotopy invariant, so $\varkappa(\widetilde{H} f)=\varkappa(f)=1$.}
\end{itemize}

The integer $\widehat{\nu}(\widetilde{H} f)$ is even since
$$\widehat{\nu}(\widetilde{H} f) \overset{(1)}{\equiv} \mu(\widetilde{H} f) \overset{(2)}{\equiv} \mu(f) \overset{(3)}{\equiv} \widehat{\nu}(f) \overset{(4)}{\equiv} 0 \pmod{2}, \text{ where}$$
\begin{itemize}
\item[(1,3)]{follow from Lemma \ref{lemSameParity}.}
\item[(2)]{holds, because $\mu$ is well defined (Lemma \ref{lemL1hatwd}) and $\widetilde{H} f$ is isotopic to $f$.}
\item[(4)]{follows from Lemmas \ref{lemBasis} and \ref{lemInBasis} since ${\delet f}=z_3$.}
\end{itemize}

\end{proof}

%%%%%%%%%%%%%%%%%%%%%%%%%%%%%%%%%%%%%%%%%%%%%%%%%%%%%%%%%%%%%%%%%%%%%%%%%%%%%%%%%%%%%%%%%%%%%%%%%%%%%%%%%
%%%%%%%%%%%%%%%%%%%%%%%%%%%%%%%%%%%%%%%%%%%%%%%%%%%%%%%%%%%%%%%%%%%%%%%%%%%%%%%%%%%%%%%%%%%%%%%%%%%%%%%%%
\medskip
\section{Proof of Lemma \ref{lemL2wd} (that $\widehat{\nu}$ is well-defined).}
%\begin{proof}[Proof of Lemma \ref{lemL2wd}]
Let $f$ and $f'$ be isotopic simple embeddings. We shall prove that $\widehat{\nu}(f')-\widehat{\nu}(f)$ is divisible by $2\varkappa(f)=2\varkappa(f')$. This will imply that $\nu(f)$ is well defined.

Let $H:S^6\times I \rightarrow S^6\times I$ be an isotopy between $f$ and $f'$. Since the embeddings $f$ and $f'$ are simple, it follows that $H_0|_{S^2\times S^1}=H_1|_{S^2\times S^1}=\std|_{S^2\times S^1}$. Therefore we may assume that $H_1(C_{\partial\std})=C_{\partial\std}$. Let $\widetilde{H}:C_{\partial\std}\rightarrow C_{\partial\std}$ be the abbreviation of $H_1$. Let $\widetilde{H}^*:\pi_3(C_{\partial\std})\rightarrow \pi_3(C_{\partial\std})$ be the homomorphism induced by $\widetilde{H}$. Then for some integer $k$
\begin{multline*}
\widehat{\nu}(f')[z_2] + \varkappa(f)[z_3] \overset{(1)}{=} [{\delet f}'] = [\widetilde{H}{\delet f}] = \widetilde{H}^*[{\delet f}] \overset{(2)}{=} \widetilde{H}^*\big(\widehat{\nu}(f)[z_2] + \varkappa(f)[z_3]\big) = \\ \widehat{\nu}(f)[\widetilde{H} z_2] + \varkappa(f)[\widetilde{H} z_3] \overset{(3)}{=} \big(\widehat{\nu}(f)+2\varkappa(f)k\big)[z_2] + \varkappa(f)[z_3], \text{ where}
\end{multline*}

\begin{itemize}
\item[(1)]{holds by Lemma \ref{lemInBasis}.}
\item[(2)]{holds, because $[{\delet f}] = \widehat{\nu}(f)[z_2] + \varkappa(f)[z_3]$ by Lemma \ref{lemInBasis}.}
\item[(3)]{holds, because $[\widetilde{H} z_2] = [z_2]$ and $[\widetilde{H} z_3] = 2k[z_2]+[z_3]$ by Lemma \ref{lemTorIsotopy}.}
\end{itemize}

The homotopy classes $[z_2]$ and $[z_3]$ form a basis of $\pi_3(C_{\partial\std})$ by Lemma \ref{lemBasis}. So, $\widehat{\nu}(f')-\widehat{\nu}(f)=2\varkappa(f)k$. \qed
%\end{proof}

%%%%%%%%%%%%%%%%%%%%%%%%%%%%%%%%%%%%%%%%%%%%%%%%%%%%%%%%%%%%%%%%%%%%%%%%%%%%%%%%%%%%%%%%%%%%%%%%%%%%%%%%%
%%%%%%%%%%%%%%%%%%%%%%%%%%%%%%%%%%%%%%%%%%%%%%%%%%%%%%%%%%%%%%%%%%%%%%%%%%%%%%%%%%%%%%%%%%%%%%%%%%%%%%%%%
\medskip
\section{Framing an embedded connected sum.}

A {\it framed embedding} is an embedding whose image is framed.

\begin{lemFramedSum}
\label{lemFramedSum} 
Let $f:M\rightarrow S^q$ and $g:N\rightarrow S^q$ be framed embeddings of compact orientable connected manifolds, ${\rm dim}M={\rm dim}N$. Then the framing of $f(M_0)\sqcup g(N_0)$ extends to a framing of $(f\#g)(M\#N)$.
\end{lemFramedSum}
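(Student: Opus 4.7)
The plan is to use the embedded $(p+1)$-disk $\Lambda := l(D^p\times I)\subset S^q$ as a framing bridge for the tube $l(S^{p-1}\times I)$. Since $\Lambda$ is contractible, its normal bundle $\nu_\Lambda$ in $S^q$ is trivial. The normal bundle of the tube in $S^q$ decomposes as the direct sum of a trivial line bundle (the inward radial direction pointing from the tube into $\Lambda$) and the restriction of $\nu_\Lambda$ to the tube; hence it is trivial of rank $q-p$.

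The given framings of $f(M)$ and $g(N)$ restrict to framings of the normal bundles of the two end disks $l(D^p\times 0) = f(b_M(D^p))$ and $l(D^p\times 1) = g(b_N(D^p))$ in $S^q$. By a rotation homotopic to the identity (possible since each end disk is contractible), I arrange each of these framings so that one framing vector is tangent to $\Lambda$ in the direction normal to the end disk, while the remaining $q-p-1$ vectors frame $\nu_\Lambda$ restricted to the end disk. The two resulting endpoint framings of $\nu_\Lambda$ then extend to a framing of $\nu_\Lambda$ over all of $\Lambda$: since $\Lambda$ is contractible, the relative extension problem for a trivial rank $q-p-1$ bundle is governed only by the components of $GL(q-p-1)$, and the orientability hypotheses on $M$ and $N$ guarantee that the two endpoint framings lie in the same component.

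Restricting the resulting framing of $\nu_\Lambda$ to the tube and adjoining the inward radial direction yields a framing of the tube's normal bundle in $S^q$. By construction this framing agrees, up to the standard smoothing of $f\#g$ near $\partial M_0$ and $\partial N_0$, with the restrictions of the given framings on $f(M_0)$ and $g(N_0)$, so the framings glue together to give a framing of all of $(f\#g)(M\#N)$. The main obstacle I anticipate is the smoothing compatibility: one must verify that the interpolation of tangent spaces implicit in the smooth structure on $M\#N$ matches the rotations arranged in the second step, so that the combined framing is smooth at the gluing. This is a standard computation using continuity of framings with respect to the tangent structure.
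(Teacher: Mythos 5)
Your proof is correct and follows essentially the same route as the paper's: the paper likewise rotates the first framing vectors to be tangent to ${\rm Im}(l)$, extends the remaining $q-p-1$ vectors from the two end disks over the whole disk $l(D^p\times I)$ (your $\Lambda$), and then recovers the first vector on the tube from the orientation of $(f\#g)(M\#N)$. Your write-up merely makes explicit the $\pi_0$-obstruction to extending the $(q-p-1)$-framing and the smoothing compatibility at the corners, both of which the paper leaves implicit.
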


\begin{proof}
Denote $p:={\rm dim}M={\rm dim}N$.
Suppose that the connected sum $f\#g$ was constructed along a tube $l:D^p\times I\rightarrow S^q$. We may assume that the first vectors of the framings of $f(M)$ and $g(N)$ are tangent to ${\rm Im}(l)$ at any point of $l|_{D^p\times 0}$ and $l|_{D^p\times 1}$, respectively (i.e., that they ``look'' into ${\rm Im}(l)$).

The last $q-p-1$ vectors of the framing of $f(M_0)\sqcup g(N_0)$ can be extended from the ends $l(D^p\times 0)$ and $l(D^p\times 1)$ of the tube to a $(q-p-1)$-framing of the whole tube $l(D^p\times I)$.

The first vectors of the framing of $f(M_0)\sqcup g(N_0)$ can now be extended to $(f\#g)(M\#N)$ using the orientation of $(f\#g)(M\#N)$.
\end{proof}

\begin{def19}
Let $f,g:M^m\rightarrow S^k$ be framed embeddings. An isotopy $H:S^k\times I\rightarrow S^k\times I$ between $f$ and $g$ is a {\it framed isotopy} if for each point $x\in M$ and for each integer $1\leq i \leq k-m$ the $dH_1$-image of the $i$-th vector of the framing of $f(M)$ at $f(x)$ is the $i$-th vector of the framing of $g(M)$ at $g(x)$.

The isotopy $H$ is also called a {\it framed isotopy between the framed submanifolds} $f(M)$ and $g(M)$.
\end{def19}

\begin{lemFramedHandlebodyIsotopy}
\label{lemFramedHandlebodyIsotopy}
Let $f,g:D^n\times S^m\rightarrow S^k$ be framed embeddings. Let $B^n\subset D^n$ be an $n$-ball. Suppose that $f|_{B^n\times S^m}=g|_{B^n\times S^m}$ and that the framings of $f(B^n\times S^m)$ and of $g(B^n\times S^m)$ are the same.

Then $f$ is framed isotopic to $g$.
\end{lemFramedHandlebodyIsotopy}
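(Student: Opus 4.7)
The plan is to construct a framed ambient isotopy from $f$ to $g$ by a ``shrink-and-unshrink'' on the $D^n$ factor, using the coincidence of $f$ and $g$ on $B^n\times S^m$ as a bridge between the two halves.

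First, fix a slightly smaller concentric $n$-ball $B_0^n\subset\mathrm{Int}(B^n)$, and construct a smooth isotopy of embeddings $\psi_t\colon D^n\to D^n$ with $\psi_0=\mathrm{id}$, $\psi_t|_{B_0^n}=\mathrm{id}$ for every $t\in[0,1]$, and $\psi_1(D^n)\subset\mathrm{Int}(B^n)$. This is obtained by radial contraction of the collar $D^n\setminus B_0^n$, smoothed near $\partial B_0^n$ with a bump function.

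Next, define an isotopy of embeddings $F_t\colon D^n\times S^m\to S^k$ by
\[
F_t=\begin{cases} f\circ(\psi_{2t}\times\mathrm{id}_{S^m}) & \text{for }t\in[0,\tfrac{1}{2}],\\ g\circ(\psi_{2-2t}\times\mathrm{id}_{S^m}) & \text{for }t\in[\tfrac{1}{2},1],\end{cases}
\]
after a mild smoothing in $t$ near $\tfrac{1}{2}$. The two branches agree at $t=\tfrac{1}{2}$ because $\psi_1(D^n)\subset B^n$ and $f|_{B^n\times S^m}=g|_{B^n\times S^m}$; hence $F_0=f$, $F_1=g$, and $F_t$ is fixed on $B_0^n\times S^m$ throughout. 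Equip each $F_t(D^n\times S^m)$ with the framing $\mathcal{F}_t$ obtained by restricting the framing of $f(D^n\times S^m)$ for $t\le\tfrac{1}{2}$ and the framing of $g(D^n\times S^m)$ for $t\ge\tfrac{1}{2}$. These rules agree at $t=\tfrac{1}{2}$, where the image lies inside $f(B^n\times S^m)=g(B^n\times S^m)$ and the two framings coincide by hypothesis. Thus $(F_t,\mathcal{F}_t)$ is a smooth one-parameter family of framed embeddings connecting $(f,\mathcal{F}_f)$ to $(g,\mathcal{F}_g)$.

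The final step is to apply the framed version of the isotopy extension theorem to $(F_t,\mathcal{F}_t)$, yielding an ambient isotopy $\mathcal{H}_t\colon S^k\to S^k$ with $\mathcal{H}_0=\mathrm{id}$, $\mathcal{H}_t\circ f=F_t$, and $(d\mathcal{H}_t)_*\mathcal{F}_f=\mathcal{F}_t$; at $t=1$ this is the required framed isotopy from $f$ to $g$. The main technical obstacle is exactly this last step: the ordinary isotopy extension theorem supplies an ambient isotopy realizing the underlying $F_t$, but any residual discrepancy between its pushed-forward framing and $\mathcal{F}_t$ is measured by a smooth family of fibrewise orthogonal automorphisms $A_t\colon D^n\times S^m\to O(k-n-m)$ with $A_0=\mathrm{id}$, which is then corrected by post-composing $\mathcal{H}_t$ with a family of ambient diffeomorphisms supported in a tubular neighborhood of $F_t(D^n\times S^m)$ and realizing the path from $\mathrm{id}$ to $A_t^{-1}$.
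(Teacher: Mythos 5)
Your proposal is correct and is precisely the ``shrinking'' argument that the paper invokes: the paper's entire proof of this lemma is the one-line remark that it is obvious and proved by shrinking, and your write-up (contract $D^n$ into $B^n$ on the $f$-side, switch to $g$ on the overlap where the maps and framings agree, unshrink on the $g$-side, then realize this by framed isotopy extension with the usual bump-function correction of the normal framing) fills in exactly the details the author omits. No gap; same approach.
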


The Lemma is obvious and is proved by the ``shrinking'' argument.
%\begin{proof}
%Using the framing of $f(D^n\times S^m)$, we may naturally construct an embedding $\widetilde{f}:(D^n\times S^m)\times D^{k-m-n}\rightarrow S^k$ such that $\widetilde{f}|_{(D^n\times S^m)\times 0}=f$ and for each $x\in D^n\times S^m$ and each $1\leq i \leq k-m-n$ the $d\widetilde{f}$-image of the vector $x\times e_i$ is the $i$-th vector of the framing of $f(D^n\times S^m)$.

%An embedding $\widetilde{g}:(D^n\times S^m)\times D^{k-m-n}\rightarrow S^k$ is constructed analogously.

%It suffices to prove that $\widetilde{f}$ is isotopic to $\widetilde{g}$.

%Without loss of generality, we may assume that $B^n=\frac{1}{2}D^n$. Let $s:D^n\rightarrow B^n$ be the map given by the formula $s(x)=\frac{1}{2}x$.

%Clearly, $\widetilde{f}$ is isotopic to $\widetilde{f}\circ(s\times {\rm id}_{S^m}\times {\rm id}_{D^{k-m-n}}):D^n\times S^m \times D^{k-m-n} \rightarrow S^k$. Likewise, $\widetilde{g}$ is isotopic to $\widetilde{g}\circ(s\times {\rm id}_{S^m}\times {\rm id}_{D^{k-m-n}}):D^n\times S^m \times D^{k-m-n} \rightarrow S^k$.

%By the assumption of the Lemma, $\widetilde{f}$ and $\widetilde{g}$ coincide on ${\rm Im}(s\times {\rm id}_{S^m}\times {\rm id}_{D^{k-m-n}})=B^n\times S^m \times D^{k-m-n}$. Hence, $\widetilde{f}$ is isotopic to $\widetilde{g}$.
%\end{proof}

%%%%%%%%%%%%%%%%%%%%%%%%%%%%%%%%%%%%%%%%%%%%%%%%%%%%%%%%%%%%%%%%%%%%%%%%%%%%%%%%%%%%%%%%%%%%%%%%%%%%%%%%%
%%%%%%%%%%%%%%%%%%%%%%%%%%%%%%%%%%%%%%%%%%%%%%%%%%%%%%%%%%%%%%%%%%%%%%%%%%%%%%%%%%%%%%%%%%%%%%%%%%%%%%%%%
\medskip
\section{Proof of Lemma \ref{lemT1}.}
The following three lemmas are proved later in this section.

\begin{lemIsotopic}
\label{lemIsotopic}
Let $f$ and $f'$ be upper simple embeddings. Suppose that $\varkappa(f)=\varkappa(f')$, $\widehat{\mu}(f)=\widehat{\mu}(f')$ and $\widehat{\nu}(f)=\widehat{\nu}(f')$. Then $f$ is isotopic to $f'$.
\end{lemIsotopic}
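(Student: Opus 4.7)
The plan is to apply Haefliger's Theorem \ref{Hthm} to an auxiliary pair of Brunnian links $S^3\sqcup S^3\to S^6$ and then transfer the resulting ambient isotopy back to $f,f'$, using the hypothesis $\varkappa(f)=\varkappa(f')$ to control the conversion.

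I would first form $g:=\sigma\sqcup f|_{S^3}$ and $g':=\sigma\sqcup f'|_{S^3}$ as Brunnian links $S^3_1\sqcup S^3_2\to S^6$; both are Brunnian because $\sigma$ is isotopic to the standard embedding and $f,f'$ are simple. By Lemma \ref{lemSameLNumbers}, $\mu(g)=\widehat\mu(f)=\widehat\mu(f')=\mu(g')$ and $\nu(g)=\widehat\nu(f)=\widehat\nu(f')=\nu(g')$. Combining Lemma \ref{lemLambda} with Theorem \ref{Hthm} yields an ambient isotopy $G:S^6\times I\to S^6\times I$ with $G_0={\rm id}$ and $G_1\circ g=g'$; in particular $G_1|_{\sigma(S^3)}={\rm id}$ since $g,g'$ share the first component $\sigma$.

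Next I would upgrade $G$ to an isotopy that pointwise fixes a tubular neighborhood of $\sigma(S^3)$ throughout $I$. Apply isotopy extension to the loop $t\mapsto G_t|_{\sigma(S^3)}$ of embeddings $\sigma(S^3)\to S^6$, arranging by general position that the extending ambient isotopy avoids $f'(S^3)$; conjugating $G$ by the result yields an isotopy that still satisfies $G_1\circ f|_{S^3}=f'|_{S^3}$ and now also fixes $\sigma(S^3)$ pointwise for every $t$. A further framed-shrinking argument in the spirit of Lemma \ref{lemFramedHandlebodyIsotopy} absorbs a small tubular neighborhood of $\sigma(S^3)$ into the identity part of $G_t$.

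The final step exploits the geometric relation between $\sigma(S^3)$ and $\std(S^2\times S^1)$: these sets coincide inside $D^6_+$ (both equal $\std(D^2_+\times S^1)$) and differ only inside a small $6$-ball $B\subset{\rm Int}D^6_-$ around the surgery circle $\std((-1,0,0)\times S^1)$. Since $f(S^3),f'(S^3)\subset{\rm Int}D^6_+$, both are disjoint from $B$. I would further modify $G$ so that it is supported outside $B$ while still satisfying $G_1\circ f|_{S^3}=f'|_{S^3}$; here the hypothesis $\varkappa(f)=\varkappa(f')$ enters to ensure the modification is compatible with the linking data of $\std(S^2\times 1)$. Outside $B$ the subsets $\sigma(S^3)$ and $\std(S^2\times S^1)$ agree, so fixing $\sigma(S^3)$ pointwise forces $\std(S^2\times S^1)\setminus B$ to be fixed pointwise; inside $B$ the isotopy is the identity, so $\std(S^2\times S^1)\cap B$ is fixed as well. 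Thus the modified $G$ is the required ambient isotopy from $f$ to $f'$.

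The main obstacle is this final support-localization: rearranging the Haefliger isotopy to be supported outside $B$ without altering $G_1$ on $f(S^3)$, while respecting the standard torus. This is a delicate handle-sliding argument inside $D^6_-$ that uses the explicit geometry of $\sigma$ and $\Delta_\sigma$, the equality $\varkappa(f)=\varkappa(f')$, and the tubular-neighborhood reduction of the preceding step; it is in the same spirit as the explicit isotopy constructions of Lemmas \ref{lemChangeL1}, \ref{lemChangeL2}, and \ref{lem2Isotopy}.
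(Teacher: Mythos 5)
The first half of your argument is exactly the paper's: form $\sigma\sqcup f|_{S^3}$ and $\sigma\sqcup f'|_{S^3}$, use Lemma \ref{lemSameLNumbers}, Lemma \ref{lemLambda} and Theorem \ref{Hthm} to conclude these links are isotopic, and then fix the common first component throughout the isotopy. Note, though, that this last step is the paper's Lemma \ref{lemHthmFixed}, a quoted nontrivial result (referenced to Haefliger and M.~Skopenkov), not a routine combination of isotopy extension and general position; your one-line justification underestimates it, although you do identify the right statement. The second half, however, contains a genuine gap. Your localization step rests on the claim that $\sigma(S^3)$ and $\std(S^2\times S^1)$ ``differ only inside a small $6$-ball $B$ around the surgery circle.'' That is false: the two sets agree only on $\std(D^2_+\times S^1)=\sigma(S^3)\cap\{x_1\geq 0\}$, while in the whole region $\{x_1<0\}$ they are two different $3$-dimensional membranes ($\std(D^2_-\times S^1)$ versus $\{x_1<0,\ x_1^2+x_4^2+x_5^2=\frac12,\ x_2^2+x_3^2=\frac12\}$) spanning the same torus; neither is contained in a small ball. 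Consequently an isotopy fixing $\sigma(S^3)$ pointwise and equal to the identity on such a $B$ gives no control over $\std(D^2_-\times S^1)$, and the conclusion that the modified $G$ respects $\std(S^2\times S^1)$ does not follow. You also never say concretely where $\varkappa(f)=\varkappa(f')$ is used; ``to ensure the modification is compatible with the linking data'' is not an argument.

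The paper's actual mechanism for this second half is different and is the real content of the lemma. It does not try to upgrade the Haefliger isotopy to one preserving the torus. Instead it takes the track ${\rm Im}(H)\subset C_{\sigma}\times I$ of the isotopy of the $S^3$-components, shows that its algebraic intersection number with ${\widetilde\Delta}^2\times I$ vanishes, where ${\widetilde\Delta}^2$ is a $2$-disk in $D^6_-$ completing to a $2$-cycle $\Sigma^2$ together with pieces of $\sigma(S^3)$ --- this is exactly where $\varkappa(f)=\varkappa(f')$ enters, via ${\rm lk}(f(S^3),\Sigma^2)=\varkappa(f)={\rm lk}(f'(S^3),\Sigma^2)$ --- then applies the Whitney trick to obtain a \emph{concordance} disjoint from ${\widetilde\Delta}^2\times I$, pushes that concordance out of $D^6_-\times I$ along paths to ${\widetilde\Delta}^2\times I$ (as in the proof of Lemma \ref{lemExistStandardized}), and finally unions it with $\std\times{\rm id}$ and invokes the fact that concordance implies isotopy in codimension at least $3$. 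Without some replacement for this intersection-number computation and the Whitney-trick-plus-pushing step, your proof is incomplete at precisely the point you flag as ``the main obstacle.''
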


\begin{lemChangeL1}
\label{lemChangeL1}
Let $f$ be a upper simple embedding. Then for any integer $c$ there exists a upper simple embedding $f'$ isotopic to $f$ and such that $\widehat{\mu}(f')=\widehat{\mu}(f)+2\varkappa(f)c$ and $\widehat{\nu}(f')=\widehat{\nu}(f)$.
\end{lemChangeL1}

\begin{lemChangeL2}
\label{lemChangeL2}
Let $f$ be a simple embedding. Then for any integer $c$ there exists a simple embedding $f'$ isotopic to $f$ and such that $\widehat{\nu}(f')=\widehat{\nu}(f)+2\varkappa(f)c$.
\end{lemChangeL2}

\begin{proof}[Proof of Lemma \ref{lemT1} modulo Lemmas \ref{lemIsotopic}, \ref{lemChangeL1}, and \ref{lemChangeL2}]
Without loss of generality, we may assume that both $f$ and $g$ are simple.

By Lemma \ref{lemChangeL2}, there are simple embeddings $f'$ and $g'$ isotopic to $f$ and $g$, respectively, and such that $\widehat{\nu}(f')=\widehat{\nu}(g')$.

By Lemma \ref{lemExistStandardized}, there is an isotopy fixed on $S^2\times S^1$ between $f'$ and some upper simple embedding $f''$. Likewise, there is an isotopy fixed on $S^2\times S^1$ between $g'$ and some upper simple embedding $g''$. Since the isotopy between $f'$ and $f''$ is fixed on $S^2\times S^1$, it follows by Lemma \ref{lemFixedIsotopy} that $\widehat{\nu}(f')=\widehat{\nu}(f'')$. Likewise, $\widehat{\nu}(g')=\widehat{\nu}(g'')$. Therefore, $\widehat{\nu}(f'')=\widehat{\nu}(g'')$.

By Lemma \ref{lemChangeL1}, there are upper simple embeddings $f'''$ and $g'''$ isotopic to $f''$ and $g''$, respectively, and such that $\widehat{\nu}(f''')=\widehat{\nu}(g''')$ and $\widehat{\mu}(f''')=\widehat{\mu}(g''')$. Clearly, $\varkappa(f''')=\varkappa(f)=\varkappa(g)=\varkappa(g''')$.

By Lemma \ref{lemIsotopic}, the embeddings $f'''$ and $g'''$ are isotopic. Hence $f$ and $g$ are isotopic as well.
\end{proof}

To prove Lemma \ref{lemIsotopic}, we need the following Lemma \ref{lemHthmFixed}. See its proof in \cite[proof of Theorem 7.1]{Ha66} or in \cite[proof of Claim 3.1]{MSk13}. More details should appear in the newer version of \cite{MSk13}.
\begin{lemHthmFixed}
\label{lemHthmFixed}
Let $g,g':S^3_1\sqcup S^3_2\rightarrow S^6$ be isotopic embeddings. Suppose that $g|_{S^3_1}=g'|_{S^3_1}$. Then there exists an isotopy between $g$ and $g'$ fixed on $S^3_1$.
\end{lemHthmFixed}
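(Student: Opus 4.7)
My plan is to convert the problem into a lifting problem for a fibration of embedding spaces, and then argue that the relevant monodromy vanishes.

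First, upgrade the given isotopy between $g$ and $g'$ to an ambient isotopy $H:S^6\times I\to S^6\times I$ with $H_0=\mathrm{id}$ and $H_1\circ g=g'$. Restricting, $\alpha_t := H_t\circ g|_{S^3_1}$ is a loop of embeddings $S^3\to S^6$ based at $g|_{S^3_1}$, since $g|_{S^3_1}=g'|_{S^3_1}$ by hypothesis. What we must produce is an ambient isotopy $K$ from $\mathrm{id}$ to a diffeomorphism carrying $g$ to $g'$ with $K_t\circ g|_{S^3_1}=g|_{S^3_1}$ for every $t$. Equivalently, writing $g'=\Phi\circ g$ for some $\Phi\in\mathrm{Diff}(S^6)$ isotopic to the identity, we must isotope $\Phi$ to the identity through diffeomorphisms that fix $g(S^3_1)$ pointwise.

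Next, invoke the parametric isotopy extension theorem of Palais/Cerf: the restriction map
\[
r:\mathrm{Emb}(S^3_1\sqcup S^3_2,S^6)\longrightarrow \mathrm{Emb}(S^3_1,S^6)
\]
is a Serre fibration, with fiber over $g|_{S^3_1}$ the space $\mathrm{Emb}(S^3_2,S^6\setminus g(S^3_1))$. The isotopy $H$ gives a path from $g$ to $g'$ in the total space projecting to the loop $\alpha$ in the base. If $\alpha$ were null-homotopic, the homotopy lifting property would let me deform this path (rel its endpoints) to a path lying entirely in the fiber over $g|_{S^3_1}$; such a fiberwise path is exactly an isotopy from $g$ to $g'$ fixing $S^3_1$. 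Equivalently, the tail of the long exact sequence
\[
\pi_1\bigl(\mathrm{Emb}(S^3_1,S^6),g|_{S^3_1}\bigr)\;\longrightarrow\;\pi_0\bigl(\mathrm{Emb}(S^3_2,S^6\setminus g(S^3_1))\bigr)\;\longrightarrow\;\pi_0\bigl(\mathrm{Emb}(S^3_1\sqcup S^3_2,S^6)\bigr)
\]
shows that $g$ and $g'$, which coincide in the right-hand group and have the same image in the base, differ in $\pi_0$ of the fiber by precisely an element coming from $\pi_1$ of the base. So my task is to kill this contribution.

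The main obstacle therefore is exactly to show that the class of the loop $\alpha$ in $\pi_1(\mathrm{Emb}(S^3,S^6),g|_{S^3_1})$ acts trivially on $\pi_0$ of the fiber. This is a high-codimension phenomenon: the codimension is $6-3=3$, which sits in the metastable range where the space of embeddings $S^3\to S^6$ is very close to a space of maps and, by Haefliger's results on $\pi_*\mathrm{Emb}(S^p,S^q)$ in this range, has trivial $\pi_1$ up to the monodromy that matters. Concretely, I would argue this via isotopy extension in the complement: pick a tubular neighborhood $T$ of $g(S^3_1)$ in $S^6$, push $g'(S^3_2)$ away from $T$ (possible as the image of $g'|_{S^3_2}$ is disjoint from $g(S^3_1)$), and use that any self-isotopy of $S^6$ that is the identity on $g(S^3_1)$ at times $0$ and $1$ can be deformed, rel those times, to an isotopy fixing a neighborhood of $g(S^3_1)$ at every time --- this is where the codimension-$3$ hypothesis is crucial, providing the room to unknot the loop of normal framings. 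Composing this corrected isotopy with $H$ yields the required ambient isotopy between $g$ and $g'$ fixed on $S^3_1$. I anticipate that, as the paper notes, the detailed verification of this last deformation step is exactly the content cited from \cite[proof of Theorem 7.1]{Ha66} and \cite[proof of Claim 3.1]{MSk13}.
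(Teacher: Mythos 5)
The paper does not actually prove Lemma \ref{lemHthmFixed}: it only points to \cite[proof of Theorem 7.1]{Ha66} and \cite[proof of Claim 3.1]{MSk13}. Your set-up --- treating the restriction map $\mathrm{Emb}(S^3_1\sqcup S^3_2,S^6)\to\mathrm{Emb}(S^3_1,S^6)$ as a fibration via parametric isotopy extension, and reducing the lemma to the statement that the loop $\alpha_t=H_t\circ g|_{S^3_1}$ acts trivially on $\pi_0$ of the fiber --- is the correct and standard skeleton, and it is indeed the skeleton of the cited arguments. The problem is that this monodromy statement is the entire mathematical content of the lemma, and your proposal does not establish it; it ends by deferring that step to the very same references the paper cites, so as a standalone proof it proves only the (correct) reduction.

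Moreover, the two heuristics you offer for the key step do not hold up. First, the appeal to the metastable range is factually wrong: for embeddings $S^3\to S^6$ the metastable inequality $2\cdot 6\geq 3\cdot 3+4$ fails, and this failure is precisely why $\pi_0\,\mathrm{Emb}(S^3,S^6)\cong{\mathbb Z}$ (Haefliger's knotted $3$-spheres) rather than what a deleted-product computation would predict; no control of $\pi_1\,\mathrm{Emb}(S^3,S^6)$ or of the monodromy can be extracted this way. Second, the concluding claim that any ambient isotopy of $S^6$ which is the identity on $g(S^3_1)$ at times $0$ and $1$ can be deformed, rel those times, to one fixing a neighborhood of $g(S^3_1)$ at every time is not an available lemma --- it is essentially a restatement of what must be proved. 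A diffeomorphism of $S^6$ isotopic to the identity and fixing $g(S^3_1)$ pointwise can still act on the complement by a nontrivial twist along the boundary $S^3\times S^2$ of a tubular neighborhood, and showing that such twists do not change the isotopy class of the second component in the complement is exactly the content of \cite[Theorem 7.1]{Ha66} and \cite[Claim 3.1]{MSk13}. So the proposal correctly locates the difficulty but leaves it unresolved, and the intermediate justifications it supplies are either inapplicable or circular.
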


\begin{proof}[Proof of Lemma \ref{lemIsotopic}]
Denote by $C_{\sigma}$ the closure of the complement to a small tubular neighborhood of $\sigma(S^3)$ in $S^6$. Since $f,f'$ are upper simple, it follows that $f(S^3),f'(S^3)\subset C_{\sigma}$. Let $g,g':S^3\rightarrow C_{\sigma}$ be the abbreviations of $f|_{S^3}$ and $f'|_{S^3}$, respectively.

We shall deduce the Lemma from the following claims which are proved immediately after the Lemma.

\begin{itemize}
\item[Claim 1.]{There is an isotopy $H:S^3\times I\rightarrow C_{\sigma}\times I$ between $g$ and $g'$.}
\item[Claim 2.]{Denote $${\widetilde\Delta}^2:=\{x_1\leq -\frac{1}{\sqrt{2}}, x_4=x_5=x_6=x_7=0\}\subset S^6.$$

Then the algebraic number of points of intersection ${\rm Im}(H)\overset{\cdot}{\cap} ({\widetilde\Delta}^2\times I)$ is zero.}
\end{itemize}

	\begin{figure}[H]
	\centering
	\includegraphics[width=120mm]{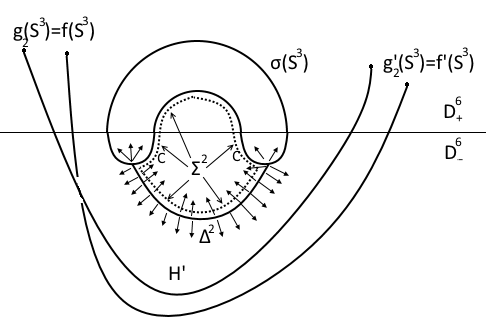}
	\caption{Proof of Lemma \ref{lemIsotopic}. We ``push'' ${\rm Im}(H')$ out of $D^6_-\times I$ along paths starting at ${\widetilde\Delta}^2\times I$.}
	\label{figIsotopy}
	\end{figure}

By Claim 2 and by the Whitney Trick Theorem, there is a concordance $H':S^3\times I\rightarrow C_{\sigma}\times I$ between $g$ and $g'$ whose image ${\rm Im}H'$ is disjoint from ${\widetilde\Delta}^2\times I$.

Now we ``push'' $H'(S^3\times I)$ out of $D^6_-\times I$ along paths starting at ${\widetilde\Delta}^2\times I$ analogously to how we ``pushed'' $f(S^3)$ out of $D^6_-$ along paths starting at $\Delta^2$ in the proof of Lemma \ref{lemExistStandardized}. The result of the ``pushing'' is a concordance $H'':S^3\times I\rightarrow C_{\sigma}\times I$ between $g$ and $g'$ whose image is in $D^6_+\times I$.

So there is a concordance $\std\times{\rm id} \sqcup H'':(S^2\times S^1 \sqcup S^3)\times I\rightarrow S^6\times I$ between $f$ and $f'$. Since in codimension at least $3$ concordance implies isotopy, it follows that $f$ is isotopic to $f'$.
\end{proof}

\begin{rem}
Lemma \ref{lemIsotopic} is similar to part (b) of Standartization Lemma 2.1 in \cite{Sk15}. The analogue of Claim 2 is used in the proof of part (b) of Standartization Lemma. Under the dimension restrictions of the hypothesis of Standartization Lemma, however, the analogue of Claim 2 holds just by general position.
\end{rem}

\begin{proof}[Proof of Claim 1.]
Consider linked spheres $\sigma\sqcup f|_{S^3}:S^3\sqcup S^3\rightarrow S^6$ and $\sigma\sqcup f'|_{S^3}:S^3\sqcup S^3\rightarrow S^6$. We have that 
$$\mu(\sigma\sqcup f|_{S^3}) \overset{(1)}{=} \widehat{\mu}(f) \overset{(2)}{=} \widehat{\mu}(f') \overset{(3)}{=} \mu(\sigma\sqcup f'|_{S^3}), \text{ where}$$
\begin{itemize}
\item[(1,3)]{hold by Lemma \ref{lemSameLNumbers}.}
\item[(2)]{holds by the hypothesis of the Lemma \ref{lemIsotopic}.}
\end{itemize}
Similarly, $\nu(\sigma\sqcup f|_{S^3})=\nu(\sigma\sqcup f'|_{S^3})$. 

By Theorem \ref{Hthm}, links $\sigma\sqcup f|_{S^3}$ and $\sigma\sqcup f'|_{S^3}$ are isotopic to $g_{m,n}$ and $g_{m',n'}$ respectively, for some integers $m,n,m'$, and $n'$, such that $m\equiv n\pmod{2}$ and $m'\equiv n'\pmod{2}$. By Lemma \ref{lemLambda}, we have that $m=\mu(\sigma\sqcup f|_{S^3})$, $n=\nu(\sigma\sqcup f|_{S^3})$, $m'=\mu(\sigma\sqcup f'|_{S^3})$, and $n'=\nu(\sigma\sqcup f'|_{S^3})$. Since $\mu(\sigma\sqcup f|_{S^3})=\mu(\sigma\sqcup f'|_{S^3})$ and $\nu(\sigma\sqcup f|_{S^3})=\nu(\sigma\sqcup f'|_{S^3})$, it follows that $m=m'$ and $n=n'$, i.e., that $\sigma\sqcup f|_{S^3}$ is isotopic to $\sigma\sqcup f'|_{S^3}$.

It now follows by Lemma \ref{lemHthmFixed}, that $f|_{S^3}$ is isotopic to $f'|_{S^3}$ in $C_{\sigma}$, i.e., that $g$ is isotopic to $g'$.
\end{proof}

\begin{proof}[Proof of Claim 2.]

We may assume that ${\rm Im}(H)$ and ${\widetilde\Delta}^2\times I$ are in general position.

Let $C\subset \sigma(S^3)\cap D^6_-$ be an embedded cylinder diffeomorphic to $S^1\times I$ such that $\partial C=\partial {\widetilde\Delta}^2 \sqcup \partial(\std(D^2_+\times 1))$. Let $[{\widetilde\Delta}^2]$, $[C]$, and $[\std(D^2_+\times 1)]$ be $2$-chains in $C_2(S^6;{\mathbb Z})$ with supports ${\widetilde\Delta}^2$, $C$, and $\std(D^2_+\times 1)$, respectively, and such that $\partial([{\widetilde\Delta}^2]+[C]+[\std(D^2_+\times 1)])=0$. Denote $\Sigma^2:=[{\widetilde\Delta}^2]+[C]+[\std(D^2_+\times 1)]$ (Fig.~\ref{figIsotopy}).

Clearly, 
\begin{multline*}
{\rm lk}({\rm Im}(H_0), \Sigma^2\times 0) = {\rm lk}(g_2(S^3), \Sigma^2) = {\rm lk}(f(S^3), \Sigma^2) = \varkappa(f) =\\ = \varkappa(f') = {\rm lk}({\rm Im}(H_1), \Sigma^2\times 1).
\end{multline*}

Hence, the algebraic number of points of the intersection ${\rm Im}(H)\overset{\cdot}{\cap} (\Sigma^2\times I)$ is zero. Also ${\rm Im}(H)\cap (C \cup \std(D^2_+\times 1))\times I=\emptyset$ since $C \cup \std(D^2_+\times 1)\subset \sigma(S^3)$. Therefore, the algebraic number of points of the intersection ${\rm Im}(H)\overset{\cdot}{\cap} ({\widetilde\Delta}^2\times I)$ is zero as well.

\end{proof}

\begin{proof}[Proof of Lemma \ref{lemChangeL1}]
It suffices to consider only the case $c=1$. Since $f$ is upper simple, there exists $\epsilon>0$ such that $f(S^3)\subset \{x_1>2\epsilon\}\subset D^6_+$.

Denote $\overline{\epsilon}:=\sqrt{1-\epsilon^2}$. Define an embedding $w:S^3\rightarrow S^6$ by the formula:
$$w(x_1,x_2,x_3,x_4):=(\epsilon,\overline{\epsilon}x_1,\overline{\epsilon}x_2,0,0,\overline{\epsilon}x_3,\overline{\epsilon}x_4).$$

Clearly, $w(S^3)\cap \std(S^2\times S^1)=\emptyset$. Since $w(S^3)\subset \{x_1=\epsilon\}$ and $f(S^3)\subset \{x_1>2\epsilon\}$, it follows that $w(S^3)\cap f(S^3)=\emptyset$.

Let $f':S^2\times S^1\sqcup S^3\rightarrow S^6$ be the upper simple embedding such that $f'|_{S^3}=f|_{S^3}\#w$. We shall prove that $f'$ is as required.

The sphere $w(S^3)$ bounds the disk $\{x_1\leq \epsilon, x_4=0, x_5=0\}$ in $S^6$. Clearly, this disk is disjoint from $\std(S^2\times S^1)$. This disk is also disjoint from $f(S^3)$, since it lies in $\{x_1\leq\epsilon\}$ and since $f(S^3)\subset \{x_1>2\epsilon\}$. Therefore there is an isotopy between $f'$ and $f$ fixed on $S^2\times S^1$. By Lemma \ref{lemFixedIsotopy}, it follows that $\widehat{\nu}(f')=\widehat{\nu}(f)$.

It remains to prove that $\widehat{\mu}(f')-\widehat{\mu}(f)=2\varkappa(f)$. 

The sphere $w(S^3)$ also bounds the disk $W:=\{x_1=\epsilon, x_4\geq 0, x_5=0\}$ in $S^6$. Consider the framed submanifold $W_{e_5, e_1}$. Its natural orientation agrees with the orientation of its boundary $w(S^3)$. By Lemma \ref{lemExistsDisk}, there is a $4$-disk $\Delta\subset \{x_1>2\epsilon\}$ with boundary $f(S^3)$. The disk $\Delta$ is disjoint from $W$ since $W\subset \{x_1=\epsilon\}$. Frame $\Delta$ arbitrarily so that its natural orientation agrees with the orientation of its boundary $f(S^3)$.

Denote $a:=\std^{-1}|_{S^2\times S^1}(W_{e_5, e_1})$ and $b:=\std^{-1}|_{S^2\times S^1}(\Delta)$. Then $a$ and $b$ are framed $1$-submanifolds of $S^2\times S^1$. We have 
\begin{multline*}
\widehat{\mu}(f')-\widehat{\mu}(f) = \hp(\theta(a\sqcup b)) - \hp(\theta(b)) =\\= \hp(\theta(a))+\hp(\theta(b))+2{\rm lk}(\theta(a),\theta(b)) - \hp(\theta(b)) =\\= \hp(\theta(a))+2{\rm lk}(\theta(a),\theta(b)) = 0 + 2\varkappa(f).
\end{multline*}
It remains to prove the last equation, i.e., that $\hp(\theta(a))=0$ and ${\rm lk}(\theta(a),\theta(b))=\varkappa(f)$.

The framed intersection $W_{e_5, e_1}\cap\std(S^2\times S^1)$ is the framed circle $$\{x_1=\epsilon,\ x_2^2+x_3^2=\frac{1}{2}-\epsilon^2,\ x_4=\frac{1}{\sqrt{2}}\}_{e_5, e_1}\subset S^6.$$ So, $a=\std^{-1}|_{S^2\times S^1}(W_{e_5, e_1})$ is the framed circle $$(\{x_1=\sqrt{2}\epsilon, x_2^2+x_3^2=1-2\epsilon^2\}\times 1)_{0\times e_2,e_1\times 0}\subset S^2\times S^1.$$ Then $\theta(a)$ is the framed circle $$\{x_1^2+x_2^2=\frac{1}{2}-\epsilon^2,x_3=\sqrt{\frac{1}{2}+\epsilon^2},x_4=0\}_{e_4,v}\subset S^3,$$ for a certain normal field $v$. Clearly, $\hp(\theta(a))=0$.

Consider the handlebody $T:=\{x_1^2+x_2^2\leq\frac{1}{2}, x_3^2+x_4^2\geq\frac{1}{2}\}$ in $S^3$ diffeomorphic to $D^2\times S^1$. The circle $\theta(a)$ is ``almost'' the meridian $S^1\times 1$ of $T$. The circle $\theta(b)$ lies in ${\rm Int}T$ and is ``further away'' from $\partial T$ than $\theta(a)$. Moreover, $\theta(b)$ is homologous to $\varkappa(f)[0\times S^1]$ in $T$. Therefore, ${\rm lk}(\theta(a),\theta(b))=\varkappa(f)$.

\end{proof}

To prove Lemma \ref{lemChangeL2}, we need the following Lemma \ref{lem2Isotopy}.

\begin{lem2Isotopy}
\label{lem2Isotopy}
There exists an isotopy $H:S^6\times I \rightarrow S^6\times I$ such that $H_1|_{\std(S^2\times S^1)}={\rm id}$, $H_1(C_{\partial\std})=C_{\partial\std}$, and $[\widetilde{H} z_3] = 2[z_2]+[z_3]$, where $\widetilde{H}:C_{\partial\std}\rightarrow C_{\partial\std}$ is the abbreviation of $H_1$.
\end{lem2Isotopy}

\begin{proof}
Define an embedding $z_4:S^4\rightarrow S^6$ by the formula
$$z_4(x_1,x_2,x_3,x_4,x_5):=(x_1,x_2,x_3,0,0,x_4,x_5).$$
Note that $z_4(S^4)$ is a sphere isotopic to the standard sphere and linked with $\std(0\times S^1)$.

Frame $\std(D^3\times S^1)$ with vectors $e_6,e_7$ at each point (this is the standard framing). Frame $z_4(S^4)$ with vectors $e_4,e_5$ at each point. By Lemma \ref{lemFramedSum}, we may extend the framings of $\std((D^3\times S^1)_0)_{e_6,e_7}$ and $z_4(S^4_0)_{e_4,e_5}$ to a framing $\zeta$ of $(\std\#z_4)(D^3\times S^1)$.

We have framed the images of the embeddings $\std\#z_4:D^3\times S^1\rightarrow S^6$ and $\std:D^3\times S^1\rightarrow S^6$. Take a $3$-disk $B^3\subset D^3$ such that $B^3\times S^1\subset (D^3\times S^1)_0$. Clearly, $(\std\#z_4)|_{B^3\times S^1}=\std|_{B^3\times S^1}$ and the framings of $(\std\#z_4)(B^3\times S^1)$ and $\std(B^3\times S^1)$ are the same.

So, by Lemma \ref{lemFramedHandlebodyIsotopy}, there is a framed isotopy $H:S^6\times I \rightarrow S^6\times I$ between the embeddings $\std\#z_4$ and $\std$ (the isotopy $H$ is not assumed to be fixed on the boundary $\partial(D^3\times S^1)$). We may assume that $H_1(C_{\partial\std})=C_{\partial\std}$. We shall prove that $H$ is as required.

Lemmas \ref{lemBasis} and \ref{lemTorIsotopy} assert that $$[\widetilde{H} z_3]=\hp(p_2\widetilde{H} z_3)[z_2]+{\rm deg}(p_3\widetilde{H} z_3)[z_3] \text{\quad and\quad} [\widetilde{H} z_3]=2k[z_2]+[z_3],\text{\quad for some integer }k.$$ By Lemma \ref{lemBasis}, the homotopy classes $[z_2]$ and $[z_3]$ form a basis of $\pi_3(C_{\partial\std})$, so ${\rm deg}(p_3\widetilde{H} z_3)=1$. So $[\widetilde{H} z_3] = \hp(p_2\widetilde{H} z_3)[z_2]+[z_3]$. It remains to prove that $\hp(p_2\widetilde{H} z_3)=2$.

By the definition of $p_2$, the framed submanifold $\std(D^3\times S^1)_{e_6,e_7}\cap C_{\partial\std}$ is the framed preimage of a regular point of $p_2:C_{\partial\std}\rightarrow S^2$. Therefore, the framed submanifold $(\std\#z_4)(D^3\times S^1)_\zeta\cap C_{\partial\std}$ is the framed preimage of a regular point of $p_2\widetilde{H}:C_{\partial\std}\rightarrow S^2$.

By general position, we may assume that $$z_3(S^3)\cap (\std\#z_4)(D^3\times S^1) = (z_3(S^3)\cap \std(D^3\times S^1)) \sqcup (z_3(S^3)\cap z_4(S^4)).$$ So, the value of the Hopf invariant $\hp(p_2\widetilde{H} z_3)$ is equal to the value of the Hopf invariant of the union of framed circles
\begin{multline*}
z^{-1}_3(\std\#z_4(D^3\times S^1)_\zeta)=z^{-1}_3(\std(D^3\times S^1)_{e_6,e_7})\sqcup z^{-1}_3(z_4(S^4)_{e_4,e_5}) =\\= \{x_1^2+x_2^2=1\}_{e_3,e_4} \sqcup \{x_3^2+x_4^2=1\}_{e_1,e_2} \subset S^3.
\end{multline*}
In other words, 
\begin{multline*}
\hp(p_2\widetilde{H} z_3)=h(\{x_1^2+x_2^2=1\}_{e_3,e_4} \sqcup \{x_3^2+x_4^2=1\}_{e_1,e_2})=\\=h(\{x_1^2+x_2^2=1\}_{e_3,e_4}) + h(\{x_3^2+x_4^2=1\}_{e_1,e_2}) + 2{\rm lk}(\{x_1^2+x_2^2=1\}, \{x_3^2+x_4^2=1\})=\\=0+0+2=2.
\end{multline*}
\end{proof}

\begin{proof}[Proof of Lemma \ref{lemChangeL2}]
It suffices to consider only the case $c=1$.

Lemma \ref{lem2Isotopy} asserts that there is an isotopy $H:S^6\times I \rightarrow S^6\times I$ such that $H_1|_{\std(S^2\times S^1)}={\rm id}$, $H_1(C_{\partial\std})=C_{\partial\std}$, and $[\widetilde{H} z_3]= 2[z_2]+[z_3]$, where $\widetilde{H}:C_{\partial\std}\rightarrow C_{\partial\std}$ is the abbreviation of $H_1$.

We shall prove that the simple embedding $f':=H_1f$ is as required, i.e, that $\widehat{\nu}(f')=\widehat{\nu}(f)+2\varkappa(f)$. Let $\widetilde{H}^*:\pi_3(C_{\partial\std})\rightarrow \pi_3(C_{\partial\std})$ be the homomorphism induced by $\widetilde{H}$. We have
\begin{multline*}
\widehat{\nu}(f')[z_2]+\varkappa(f')[z_3] \overset{(1)}{=} [{\delet f}'] = [\widetilde{H} {\delet f}] = \widetilde{H}^*[{\delet f}] \overset{(2)}{=} \widetilde{H}^*(\widehat{\nu}(f)[z_2]+\varkappa(f)[z_3]) =\\= \widehat{\nu}(f)[\widetilde{H} z_2]+\varkappa(f)[\widetilde{H} z_3] \overset{(3)}{=} (\widehat{\nu}(f)+2\varkappa(f))[z_2] + \varkappa(f)[z_3], \text{ where}
\end{multline*}
\begin{itemize}
\item[(1,2)]{hold by Lemma \ref{lemInBasis}.}
\item[(3)]{holds because $[\widetilde{H} z_2] = [z_2]$ by Part (I) of Lemma \ref{lemTorIsotopy}, and $[\widetilde{H} z_3] = 2[z_2]+[z_3]$ by the choice of $H$.}
\end{itemize}
By Lemma \ref{lemBasis}, the homotopy classes $[z_2]$ and $[z_3]$ form a basis of $\pi_3(C_{\partial\std})$, so $\widehat{\nu}(f')=\widehat{\nu}(f)+2\varkappa(f)$.

\end{proof}

%%%%%%%%%%%%%%%%%%%%%%%%%%%%%%%%%%%%%%%%%%%%%%%%%%%%%%%%%%%%%%%%%%%%%%%%%%%%%%%%%%%%%%%%%%%%%%%%%%%%%%%%%
%%%%%%%%%%%%%%%%%%%%%%%%%%%%%%%%%%%%%%%%%%%%%%%%%%%%%%%%%%%%%%%%%%%%%%%%%%%%%%%%%%%%%%%%%%%%%%%%%%%%%%%%%
\medskip
\section{Proof of Lemma \ref{lemExamples}.}
To prove Lemma \ref{lemExamples}, we need the following Lemma \ref{lemAdd}.

\begin{lemAdd}
\label{lemAdd}
Let $f$ be a upper simple embedding. Let $g:S^3_1\sqcup S^3_2\rightarrow S^6$ be a Brunnian embedding. Suppose that the image of $g$ lies in a small $6$-disk in ${\rm Int}D^6_+$ disjoint from the image of $f$ and $\std(D^3\times S^1)$. Then:
\begin{itemize}
\item[(I)]{$\varkappa(f\#g)=\varkappa(f)$}
\item[(II)]{$\mu(f\#g)=\mu(f)+\rho_{0,2\varkappa(f)}(\mu(g))$}
\item[(III)]{$\nu(f\#g)=\nu(f)+\rho_{0,2\varkappa(f)}(\nu(g))$}
\end{itemize}

\end{lemAdd}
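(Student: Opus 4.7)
The plan is to reduce all three parts to the additivity of the Haefliger invariants of $S^3\sqcup S^3$-links under componentwise embedded connected sum (the fact noted without proof in the proof of Lemma \ref{lemLambda}), via the bridge provided by Lemma \ref{lemSameLNumbers}. The overall strategy is: arrange the connected-sum tubes for $f\#g$ carefully, isotope $f\#g$ to an upper simple embedding $\widetilde{f}$ via a move supported away from the $S^3$-component, and then translate to the $S^3\sqcup S^3$-link $\sigma\sqcup \widetilde{f}|_{S^3}$.

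The setup is as follows. Let $B\subset{\rm Int}\,D^6_+$ be the small $6$-disk containing ${\rm Im}(g)$. By hypothesis $B$ is disjoint from $f(S^2\times S^1\sqcup S^3)$ and from $\std(D^3\times S^1)$. I choose the two tubes defining $f\#g$ inside ${\rm Int}\,D^6_+$, mutually disjoint and with the tube from $f(S^2\times S^1)$ to $g(S^3_1)$ disjoint from $f(S^2\times 1)$; this is possible by general position. Then $(f\#g)(S^3)\subset{\rm Int}\,D^6_+$. Since $g|_{S^3_1}$ is isotopic to a standard embedding inside $B$, there is an ambient isotopy supported in a small regular neighborhood of $B$ together with the first tube taking $f\#g$ to an upper simple embedding $\widetilde{f}$; by construction this neighborhood is disjoint from $(f\#g)(S^3)$, so the isotopy is fixed on the $S^3$-component. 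Since $\varkappa,\mu,\nu$ are isotopy invariants, I compute them from $\widetilde{f}$.

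For (I), the above arrangement gives $\widetilde{f}(S^2\times 1)=f(S^2\times 1)$ and $\widetilde{f}(S^3)$ is ambient isotopic to $f|_{S^3}\#g|_{S^3_2}$. The linking number is additive under embedded connected sum along a tube disjoint from the linked $2$-cycle, and ${\rm lk}(f(S^2\times 1),g(S^3_2))=0$ because $g(S^3_2)$ bounds an embedded $4$-disk inside $B$, which is disjoint from $f(S^2\times 1)$. Hence
\[
-\varkappa(f\#g)={\rm lk}(f(S^2\times 1),f(S^3))+{\rm lk}(f(S^2\times 1),g(S^3_2))=-\varkappa(f).
\]
For (II), Lemma \ref{lemSameLNumbers} gives $\widehat{\mu}(\widetilde{f})=\mu(\sigma\sqcup \widetilde{f}|_{S^3})=\mu(\sigma\sqcup (f|_{S^3}\#g|_{S^3_2}))$, the second equality by ambient isotopy invariance. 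I then claim that the $S^3\sqcup S^3$-link $\sigma\sqcup(f|_{S^3}\#g|_{S^3_2})$ is isotopic to the componentwise embedded connected sum $(\sigma\sqcup f|_{S^3})\#g$: in the latter, the first component is $\sigma\#g|_{S^3_1}$, taken along a tube that, together with $g|_{S^3_1}$ itself, lies in $B$ and is disjoint from $\sigma(S^3)\cup (f|_{S^3}\#g|_{S^3_2})$; since $g|_{S^3_1}$ is isotopic to the standard $3$-sphere inside $B$, this connected sum is isotopic rel the other component to $\sigma$. Using the additivity of $\mu$ under componentwise $\#$ for $S^3\sqcup S^3$-links cited in the proof of Lemma \ref{lemLambda}, together with Lemma \ref{lemSameLNumbers} applied to $f$, I obtain
\[
\widehat{\mu}(\widetilde{f})=\mu((\sigma\sqcup f|_{S^3})\#g)=\mu(\sigma\sqcup f|_{S^3})+\mu(g)=\widehat{\mu}(f)+\mu(g).
\]
Reducing modulo $2\varkappa(f\#g)=2\varkappa(f)$ and invoking the definitions of $\mu$ for $(S^2\times S^1)\sqcup S^3$-embeddings yields (II). Part (III) is proved identically, with $\widehat{\nu},\nu$ replacing $\widehat{\mu},\mu$.

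The main technical obstacle I expect is justifying the two ambient isotopies used above: the reduction of $f\#g$ to an upper simple $\widetilde{f}$ via an isotopy supported away from the $S^3$-component (so that $\widehat{\mu}$ and $\widehat{\nu}$ are truly unchanged), and the identification $\sigma\sqcup(f|_{S^3}\#g|_{S^3_2})\simeq(\sigma\sqcup f|_{S^3})\#g$. Both rely essentially on the hypothesis that ${\rm Im}(g)$ sits in a small disk jointly disjoint from $f$'s image and from $\std(D^3\times S^1)$, which provides the room needed to route all tubes and supporting neighborhoods.
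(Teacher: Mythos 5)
Your overall strategy --- reduce everything to the additivity of the Haefliger invariants of $S^3\sqcup S^3$-links via Lemma \ref{lemSameLNumbers} --- differs from the paper's, but two of your key steps are false, and they fail precisely because of the phenomenon the lemma is about. First, a regular neighborhood of $B$ (the ball containing ${\rm Im}(g)$) is \emph{not} disjoint from $(f\#g)(S^3)$: that component contains $g(S^3_2)$ minus a disk, which lies in $B$. Worse, no isotopy carrying $f\#g$ to an upper simple embedding can be fixed on the $S^3$-component in general: if it were, then $\widetilde{f}(S^3)$ would still contain $g(S^3_2)$ minus a disk inside $B$, disjoint from $\std(D^3\times S^1)$, and (routing the second tube off $\std(D^3\times S^1)$, which is possible by general position) one would get $\widehat{\nu}(\widetilde{f})=\widehat{\nu}(f)$, contradicting part (III) whenever $\nu(g)\neq 0$. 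The obstruction is that $g(S^3_1)$ may be linked with $g(S^3_2)$; for instance, if $\nu(g)\neq 0$ then no $4$-disk bounded by $g(S^3_1)$ avoids $g(S^3_2)$, so absorbing $g(S^3_1)$ into $\std(S^2\times S^1)$ necessarily drags the second component along. Second, and for the same reason, the identification $\sigma\sqcup(f|_{S^3}\#g|_{S^3_2})\simeq(\sigma\sqcup f|_{S^3})\#g$ is false for nontrivial $g$: with $g(S^3_2)$ sitting split inside $B$, the left-hand side is $(\sigma\sqcup f|_{S^3})\#(u\sqcup g|_{S^3_2})$ for a small split unknot $u$, and $u\sqcup g|_{S^3_2}$ is a trivial link, so by the very additivity you invoke its $\mu$ equals $\mu(\sigma\sqcup f|_{S^3})$ rather than $\mu(\sigma\sqcup f|_{S^3})+\mu(g)$. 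Your two errors happen to cancel and produce the desired formula, but neither step is valid; the justification ``$g|_{S^3_1}$ is unknotted in $B$, hence can be absorbed rel the other component'' ignores the linking between the two components of $g$, which is the entire content of parts (II) and (III).

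For comparison, the paper keeps $g(S^3_1)$ in play rather than trying to absorb it away from the second component: it caps $g(S^3_1)$ with a framed disk $\Delta_1$, forms the boundary connected sum $\std(D^3\times S^1)\sharp\Delta_1$, and uses the framed handlebody isotopy of Lemma \ref{lemFramedHandlebodyIsotopy} to carry this back to $\std(D^3\times S^1)_{e_6,e_7}$; the invariants of the resulting upper simple representative are then computed directly as Hopf invariants of preimages, which split into two framed circles lying in disjoint $3$-disks and contributing $\mu(f)$ and $\mu(g)$ (respectively $\nu(f)$ and $\nu(g)$). If you want to salvage your link-theoretic route, the statement you must actually prove is that for a correctly constructed upper simple $\widetilde{f}$ isotopic to $f\#g$, the link $\sigma\sqcup\widetilde{f}|_{S^3}$ is isotopic to $(\sigma\sqcup f|_{S^3})\#g$; this requires tracking how the absorption of $g(S^3_1)$ moves the second component, which is essentially what the paper's handlebody isotopy accomplishes.
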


\begin{proof}
Part (I) is obvious.

The following construction is needed to prove parts (II) and (III).

Let $\Delta_f\subset {\rm Int}D^6_+$ be an embedded $4$-disk such that $\partial \Delta_f=f(S^3)$. Since $g$ is a simple embedding, there are embedded $4$-disks $\Delta_{1},\Delta_{2}\subset {\rm Int}D^6_+$ such that $\partial \Delta_{1}=g(S_1^3)$ and $\partial \Delta_{2}=g(S_2^3)$. We may assume that $\Delta_{1}$ and $\Delta_{2}$ lie in a small $6$-disk in ${\rm Int}D^6_+$ disjoint from $\Delta_f$ and $\std(D^3\times S^1)$.

The definition of the {\it boundary embedded connected sum} $M\sharp N\subset S^k$ of the embedded compact orientable manifolds $M\subset S^k$ and $N\subset S^k$ with boundary is analogous to the definition of the (absolute) embedded connected sum. Suppose that $M$ and $N$ are framed. Then the framing of $M\sqcup N$ can be extended to a framing of $M\sharp N$ (analogous to Lemma \ref{lemFramedSum}).

Frame $\Delta_f$, $\Delta_{1}$, and $\Delta_{2}$ arbitrarily so that their natural orientations agree with the orientations of their respective boundaries $f(S^3)$, $g(S_1^3)$, and $g(S_2^3)$. Extend the framings of $\std(D^3\times S^1)_{e_6,e_7}$, $\Delta_f$, $\Delta_{1}$, and $\Delta_{2}$ to framings of $\std(D^3\times S^1)\sharp \Delta_{1}$ and $\Delta_f\sharp \Delta_{2}$.

By Lemma \ref{lemFramedHandlebodyIsotopy}, there is a framed isotopy $H:S^6\times I\rightarrow S^6\times I$ between the framed submanifolds $\std(D^3\times S^1)\sharp \Delta_{1}$ and $\std(D^3\times S^1)_{e_6,e_7}$.

Then $H_1(f\#g):S^2\times S^1\sqcup S^3\rightarrow S^6$ is a upper simple embedding and $H_1(\std(D^3\times S^1)\sharp \Delta_{1})=\std(D^3\times S^1)_{e_6,e_7}$.

{\it Completion of the proof of} (II).
By the definition of $\mu$
\begin{multline*}
\mu(f\#g) \equiv
\mu(H_1(f\#g)) \equiv
\hp(\theta\std^{-1}|_{S^2\times S^1}(H_1(\Delta_f\sharp\Delta_{2}))) \equiv \\
\equiv \hp(\theta(\std|_{S^2\times S^1}\#g|_{S^3_1})^{-1}(\Delta_f\sharp\Delta_{2}))
\equiv \hp(\theta(\std|_{S^2\times S^1}\#g|_{S^3_1})^{-1}(\Delta_f)\sqcup \theta(\std|_{S^2\times S^1}\#g|_{S^3_1})^{-1}(\Delta_{2})) \overset{(1)}{\equiv} \\
\overset{(1)}{\equiv} \hp(\theta(\std|_{S^2\times S^1}\#g|_{S^3_1})^{-1}(\Delta_f)) + \hp(\theta(\std|_{S^2\times S^1}\#g|_{S^3_1})^{-1}(\Delta_{2}))
\equiv \mu(f)+\mu(g) \pmod{2\varkappa(f)}.
\end{multline*}

Here (1) holds since framed $1$-submanifolds $\theta(\std|_{S^2\times S^1}\#g|_{S^3_1})^{-1}(\Delta_f)$ and $\theta(\std|_{S^2\times S^1}\#g|_{S^3_1})^{-1}(\Delta_{2})$ of $S^3$ are unlinked (they lie in disjoint $3$-disks).

{\it Completion of the proof of} (III).
By the definition of $\nu$
\begin{multline*}
\nu(f\#g) \equiv
\nu(H_1(f\#g)) \equiv
\hp({(H_1{\delet f}\#H_1g|_{S^3_2})}^{-1}\std(D^3\times S^1)) \overset{(1)}{\equiv}\\
\overset{(1)}{\equiv} \hp({(H_1{\delet f})}^{-1}\std(D^3\times S^1)) + \hp({(H_1g|_{S^3_2})}^{-1}\std(D^3\times S^1)) \equiv \\
\equiv \hp({\delet {f}}^{-1}\std(D^3\times S^1)) + \hp({g|_{S^3_2}}^{-1}(\Delta_{1})) \equiv \nu(f) + \nu(g) \pmod{2\varkappa(f)}.
\end{multline*}

Here (1) holds since framed $1$-submanifolds ${(H_1{\delet f})}^{-1}\std(D^3\times S^1)$ and ${(H_1g|_{S^3_2})}^{-1}\std(D^3\times S^1)$ of $S^3$ are unlinked (they lie in disjoint $3$-disks).

\end{proof}

\begin{proof}[Proof of Lemma \ref{lemExamples}]
By Lemma \ref{lemAdd}, it suffices to consider only the case $m=n=0$.

Clearly, $\varkappa(f_{k,0,0})={\rm lk}(f_{k,0,0}(S^2\times 1), f_{k,0,0}(S^3))=k{\rm lk}(\std(S^2\times 1), z_3(S^3))=k$.

Suppose that $k=0$. By definition of $f_{0,0,0}$, the submanifolds $f_{0,0,0}(S^2\times S^1)=\std(S^2\times S^1)\subset S^6$ and $f_{0,0,0}(S^3)\subset S^6$ lie in disjoint $6$-disk. So, there is $4$-disk bounded by $f_{0,0,0}(S^3)$ and disjoint from $f_{0,0,0}(S^2\times S^1)$, meaning that $\mu(f_{0,0,0})=0$. Also, we may assume that $f_{0,0,0}(S^3)$ is disjoint from $\std(D^3\times S^1)$, meaning that $\nu(f_{0,0,0})=0$.

Suppose now that $k\neq 0$. It suffices to consider only the case $k=1$.

Consider the $4$-disk $$\Delta:=\{x_1\geq \frac{1}{10}, x_2=x_3=0 \}\subset S^6.$$
Clearly, $\Delta\subset{\rm Int}D^6_+$ and $\partial\Delta = z_{3,\frac{1}{10}}(S^3) = f_{1,0,0}(S^3)$.
We have
$$\widehat{\mu}(f_{1,0,0}) = \hp\theta\std^{-1}|_{S^2\times S^1}(\Delta_{e_2, e_3}) = \hp(\{x_3^2+x_4^2=1\}_{e_1,e_2}) = 0.$$
Hence, $\mu(f_{1,0,0})=0$.

The map ${\delet f}_{1,0,0}$ is homotopic to $z_3$. Therefore, by Lemmas \ref{lemBasis} and \ref{lemInBasis} it follows that $\widehat{\nu}(f_{1,0,0})=0$, so $\nu(f_{1,0,0})=0$.
\end{proof}

{}

\end{document}